\documentclass[11pt]{article}
\usepackage[a4paper,margin=1in]{geometry}
\usepackage[T1]{fontenc}
\usepackage[utf8]{inputenc}
\usepackage{lmodern}
\usepackage{amsmath,amsthm,amssymb,mathtools}
\usepackage[shortlabels]{enumitem}
\usepackage{graphicx}
\usepackage{booktabs}
\usepackage{microtype}
\usepackage{tikz}
\usepackage{lineno}
\usepackage[numbers,sort&compress]{natbib}
\usepackage[colorlinks=true,linkcolor=blue,citecolor=blue,urlcolor=blue]{hyperref}
\hypersetup{
pdftitle={Forbidden graded posets in middle levels: supersaturation, enumeration and randomness},
pdfauthor={Jiangdong Ai, Mingyu Liu, Zhiyu Song},
pdfkeywords={Boolean lattice, forbidden subposets, supersaturation, hypergraph containers, random Turan problems}
}

\theoremstyle{plain}
\newtheorem{theorem}{Theorem}[section]
\newtheorem{lemma}[theorem]{Lemma}
\newtheorem{proposition}[theorem]{Proposition}
\newtheorem{corollary}[theorem]{Corollary}

\title{Forbidden graded posets in middle levels:\\
supersaturation, enumeration and randomness}
\author{
Jiangdong Ai\thanks{School of Mathematical Sciences and LPMC, Nankai University, Tianjin 300071, China. Email: \texttt{jd@nankai.edu.cn}. Funded by the National Natural Science Foundation of China (Nos. 12522117), the Fundamental and Interdisciplinary Disciplines Breakthrough Plan of the Ministry of Education of China (JYB2025XDXM207), and the Fundamental Research Funds for the Central Universities, Nankai University.}
\and Mingyu Liu\thanks{School of Mathematical Sciences and LPMC, Nankai University, Tianjin 300071, China. Email: \texttt{1120260046@mail.nankai.edu.cn}.}
\and Zhiyu Song\thanks{School of Mathematical Sciences and LPMC, Nankai University, Tianjin 300071, China. Email: \texttt{2210655@mail.nankai.edu.cn}.}
}

\date{}

\begin{document}
\maketitle

\begin{abstract}
Let $P$ be a fixed finite poset whose maximal chains all have $h$ elements, and let $\mathcal W_{n,h}$ be the union of $h$ consecutive middle levels of $2^{[n]}$. We prove that the maximum size of a $P$-free subfamily of $\mathcal W_{n,h}$ is $(\pi(P)+o(1))M_n$ for a constant $\pi(P)$, where $M_n=\binom n{\lfloor n/2\rfloor}$, and that the number of $P$-free subfamilies is $2^{(\pi(P)+o(1))M_n}$. For connected $P$ we prove supersaturation above $(\pi(P)+\epsilon)M_n$ and determine, in terms of the coordinate patterns of the copies of $P$ in the window, a range of $p$ in which the largest $P$-free subfamily of the random family $\mathcal W_{n,h}(p)$ has size $(\pi(P)+o(1))pM_n$ and a range in which it has size $(h+o(1))pM_n$; the two ranges meet whenever all copies have the same pattern. For the Boolean poset $Q_d$, which has height $d+1$, the threshold order in the corresponding window $\mathcal W_{n,d+1}$ is $n^{-d/(2^d-1)}$, and $\pi_d=\pi(Q_d)$ satisfies $(d+1)\lambda_d\le\pi_d\le d+1-\binom d{\lfloor d/2\rfloor}^{-1}$, where $\lambda_d$ is the vertex Tur\'an density of $d$-dimensional subcubes in the hypercube. Together with a construction of Ellis, Ivan and Leader this gives $d+1-\pi_d=\exp(-\Theta(d))$. For the diamond, known bounds give $2.1479\le\pi_2\le2.15121$.
\end{abstract}

\noindent\textbf{Keywords.} Boolean lattice, forbidden subposets, supersaturation, hypergraph containers, random Tur\'an problems. \\
\textbf{2020 Mathematics Subject Classification.} 05D05, 05C65, 05C80.

\section{Introduction}

Let $2^{[n]}$ be the Boolean lattice, that is, the family of all subsets of $[n]=\{1,\dots,n\}$ ordered by inclusion, and let $M_n=\binom{n}{\lfloor n/2\rfloor}$. For a finite poset $P$, a family $\mathcal F\subseteq2^{[n]}$ contains a weak copy of $P$ if there is an injective order-preserving map from $P$ to $\mathcal F$, and $\mathcal F$ is $P$-free otherwise. Throughout, copies are counted as subfamilies rather than as labelled embeddings. Let $\operatorname{La}(n,P)$ denote the maximum size of a $P$-free subfamily of $2^{[n]}$. For $d\ge1$, let $Q_d=(2^{[d]},\subseteq)$ be the $d$-dimensional Boolean poset. The case $d=2$ is the diamond, that is, four distinct sets $A,B,C,D$ with $A\subset B\subset D$ and $A\subset C\subset D$.

The forbidden subposet problem asks for the asymptotics of $\operatorname{La}(n,P)$. Bukh~\cite{Bukh2009Set} determined $\operatorname{La}(n,P)$ asymptotically for every poset $P$ whose Hasse diagram is a tree, and supersaturation, enumeration and random versions for tree posets were obtained in \cite{Balogh2025Tree,Jiang2025Tree}. The diamond is the smallest poset whose Hasse diagram contains a cycle, and its extremal problem is still open. Gr\'osz, Methuku and Tompkins~\cite{Grosz2018Upper} proved that
\[
\operatorname{La}(n,Q_2)\le\left(\frac{3+\sqrt2}{2}+o(1)\right)M_n<(2.20711+o(1))M_n.
\]
If the family is restricted to three consecutive middle levels, Manske and Shen~\cite{Manske2013Three} proved the upper bound $((3+2\sqrt3)/3+o(1))M_n$, and Balogh, Hu, Lidick\'y and Liu~\cite{Balogh2014Upper} improved it to $(2.15121+o(1))M_n$. Recently, Tompkins~\cite{Tompkins2026Improved} constructed a diamond-free family in the same three levels of size
\[
\left(2+\frac{29}{135}\prod_{i=1}^{\infty}(1-4^{-i})+o(1)\right)M_n=(2.147908\dots+o(1))M_n,
\]
which disproves the conjecture that $\operatorname{La}(n,Q_2)=(2+o(1))M_n$. More generally, Bukh~\cite{Bukh2009Set} and Griggs and Lu~\cite{Griggs2009Families} conjectured that $\operatorname{La}(n,P)=(e(P)+o(1))M_n$ for every fixed poset $P$, where $e(P)$ is the largest number of consecutive levels whose union is always $P$-free. Ellis, Ivan and Leader~\cite{Ellis2024Daisies} disproved this prediction for $Q_d$ with $d\ge4$ by a construction inside $d+1$ consecutive middle levels; Tompkins~\cite{Tompkins2026Improved} and Patk\'os~\cite{Patkos2026B3} subsequently obtained counterexamples for $d=2$ and $d=3$, respectively.

The problem is also related to the vertex Tur\'an problem in the hypercube. A copy of $Q_d$ inside $d+1$ consecutive levels of $2^{[n]}$ is the vertex set of a $d$-dimensional subcube of the hypercube $\mathcal Q_n$, and one asks for the maximum density of a subset of $V(\mathcal Q_n)$ containing no such subcube. Kostochka~\cite{Kostochka1976Piercing} and, independently, Johnson and Entringer~\cite{Johnson1989Largest} showed that this density is $2/3$ for $d=2$. Alon, Krech and Szab\'o~\cite{Alon2007Turan} and Johnson and Talbot~\cite{Johnson2010Vertex} studied the general case, and Johnson and Talbot asked whether the maximum density is always attained by a union of levels. Ellis, Ivan and Leader~\cite{Ellis2024Daisies,Ellis2025Small} answered this question in the negative for all large $d$. The Boolean case considered here restricts this problem to the $d+1$ levels that a single subcube can occupy. A random translation of the hypercube relates the two densities: we show that the window density is at least $d+1$ times the vertex Tur\'an density of the hypercube. In particular, for large $d$ a $Q_d$-free family can fill the window up to an exponentially small deficit.

Supersaturation, enumeration and random versions of forbidden subposet problems were studied by Gerbner, Nagy, Patk\'os and Vizer~\cite{Gerbner2021Supersaturation}. They proposed three strengthenings of the conjecture above, all stated in terms of $e(P)$, the largest number of consecutive middle levels whose union is $P$-free: every family of size $(e(P)+\epsilon)M_n$ contains many copies of $P$, the number of $P$-free families is $2^{(e(P)+o(1))M_n}$, and the largest $P$-free subfamily of the random family has size $(e(P)+o(1))pM_n$ above a certain threshold. Note that $e(Q_d)=d$, so the constructions in \cite{Ellis2024Daisies,Tompkins2026Improved,Patkos2026B3} show that all three statements fail for $Q_d$ with $d\ge2$. Jiang, Longbrake and Yepremyan~\cite{Jiang2026Number} proved that for every poset $P$ that is not an antichain, the number of induced $P$-free families in $2^{[n]}$ is $2^{O(\operatorname{La}^{*}(n,P))}$, where $\operatorname{La}^{*}(n,P)$ is the induced extremal number. Their proof combines a balanced supersaturation theorem, obtained by averaging over random interval sublattices of $2^{[n]}$, with the container lemma of Balogh and Samotij~\cite{Balogh2020Efficient}. The codegrees in their supersaturation theorem are controlled by a parameter $\mu(P)$, defined through the dimension of the interval spanned by an induced copy of a subposet of $P$, and $\mu(Q_d)=d/(2^d-1)$. They also note that the exponent $O(\operatorname{La}^{*}(n,P))$ would be best possible in the form $(1+o(1))\operatorname{La}^{*}(n,P)$ whenever $\operatorname{La}^{*}(n,P)/M_n$ converges. In Section~\ref{sec:examples} we compare $\mu(P)$ with the parameters arising from copies in a fixed window; the two need not agree.

We call a finite nonempty poset $P$ graded of height $h$ if all its maximal chains have $h$ elements; its ranks are $0,\dots,h-1$. Put
\[
k_{n,h}=\left\lfloor\frac{n-h+1}{2}\right\rfloor,\qquad
\mathcal W_{n,h}=\bigcup_{i=0}^{h-1}\binom{[n]}{k_{n,h}+i}.
\]
Write $\operatorname{ex}_P(n)$ and $\operatorname{forb}_P(n)$ for the largest size and the number of $P$-free subfamilies of $\mathcal W_{n,h}$. Since exactly $h$ levels are used, every copy of $P$ in $\mathcal W_{n,h}$ preserves ranks, and for connected $P$ this leaves only finitely many coordinate patterns for a copy.

\begin{theorem}\label{thm:general-main}
For every fixed nonempty graded poset $P$,
\[
\pi(P)=\lim_{n\to\infty}\frac{\operatorname{ex}_P(n)}{M_n}
\quad\text{exists},\qquad
\operatorname{forb}_P(n)=2^{(\pi(P)+o(1))M_n}.
\]
If the Hasse diagram of $P$ has connected components $P_1,\ldots,P_c$, then $\pi(P)=\max_i\pi(P_i)$.
\end{theorem}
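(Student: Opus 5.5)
Three things have to be shown: the limit $\pi(P)$ exists, the number of $P$-free families is $2^{(\pi(P)+o(1))M_n}$, and $\pi(P)$ is the maximum over the components. Throughout I normalise by the sizes of the window levels rather than by $M_n$. For fixed $h$, each level $\binom{[n]}{k_{n,h}+i}$ has size $(1+o(1))M_n$, so I work with the normalised density $\operatorname{ex}_P(n)/M_n$ measured level by level.

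\textbf{Existence of the limit.} I would use an averaging (Katona/Lubell-type) monotonicity argument. Embed $\mathcal W_{m,h}$ into $\mathcal W_{n,h}$, $m<n$, by sublattice copies: choose a set $S$ of size $k_{n,h}-k_{m,h}$ and an $m$-set $T$ disjoint from $S$, and send $A\mapsto S\cup A$. This map preserves ranks, and ranks fully determine the order structure inside the window, so a $P$-free family $\mathcal F\subseteq\mathcal W_{n,h}$ restricts to a $P$-free family in each copy. Each set of level $i$ of the big window lies in the same number of copies. Averaging therefore gives $\sum_i|\mathcal F_i|/|L_i^{(n)}|\le \max_{\mathcal G}\sum_i|\mathcal G_i|/|L_i^{(m)}|$, where $L_i^{(n)}$ denotes level $i$ of $\mathcal W_{n,h}$ and $\mathcal G$ ranges over $P$-free subfamilies of $\mathcal W_{m,h}$. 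Hence the normalised weighted maximum $a_n$ is nonincreasing in $n$, so it converges. Since level sizes are $(1+o(1))M_n$, the ratio $\operatorname{ex}_P(n)/M_n$ has the same limit $\pi(P)$. The parity of $n$ shifts $k$, but the copy map still works because only relative ranks matter.

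\textbf{Counting.} The lower bound is immediate: all subsets of one extremal family are $P$-free, giving at least $2^{\operatorname{ex}_P(n)}$ families. For the upper bound I would use hypergraph containers on the $|P|$-uniform hypergraph of copies of $P$ in $\mathcal W_{n,h}$, applied in the style of ``supersaturation via removal''. The key step is a weak supersaturation statement: any $\mathcal F$ with $|\mathcal F|\ge(\pi(P)+\epsilon)M_n$ contains $\delta$ times the total number of copies. I would derive this by the same averaging over copies of $\mathcal W_{m,h}$ with $m$ fixed and large. Since $a_m<\pi+\epsilon/2$, a positive fraction of the small windows receive more than $\operatorname{ex}_P(m)$ elements and so contain a copy. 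Each copy of $P$ lies in a bounded number of the relevant small windows relative to the total, which gives $\Omega(\#\text{copies})$ copies in $\mathcal F$.

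Containers also need codegree control, and I expect this to be the main obstacle. Copies with different coordinate patterns have wildly different counts, so I would restrict to a single dominant pattern class realised in $\mathcal W_{m,h}$; the count of that class gives balanced codegrees $\Delta_j\lesssim e/v\cdot n^{-c(j-1)}$ for some $c>0$. With $\tau=n^{-c'}$ the container lemma yields $2^{o(M_n)}$ containers of size at most $(\pi+\epsilon)M_n$, since iterating removes a polynomially small fraction. This gives $\operatorname{forb}_P(n)\le 2^{(\pi+\epsilon)M_n+o(M_n)}$. If codegree balance is awkward, I would instead use the graph/hypergraph removal lemma on the fixed-size structure: $o(\#\text{copies})$ copies can be destroyed by deleting $o(M_n)$ sets, and combining this with the supersaturation above gives the container statement directly.

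\textbf{Components.} For $\pi(P)\ge\max_i\pi(P_i)$: a $P_i$-free family is $P$-free, since a copy of $P$ restricts to a copy of $P_i$. Here each $P_i$ must be graded of the same height $h$, which holds because maximal chains of $P$ lie in a single component. For the upper bound, take $\mathcal F$ with $|\mathcal F|\ge(\max_i\pi(P_i)+\epsilon)M_n$. Supersaturation for each $P_i$ gives $\Omega(N_i)$ copies of $P_i$, where $N_i$ is the total number of copies. I then need disjoint copies of $P_1,\dots,P_c$ with no extra comparabilities required, since the definition uses only an order-preserving injection. So it suffices to find them greedily: delete a copy of $P_1$, which removes $O(1)$ sets and hence only $o(M_n)$ density, then find $P_2$ in what remains, and so on. This shows $\mathcal F$ contains $P$, so $\pi(P)\le\max_i\pi(P_i)+\epsilon$.
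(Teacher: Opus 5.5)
\textbf{Verdict: there is a genuine gap in the counting upper bound for posets with several types.} Your existence argument and the component formula are correct.

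\textbf{What is fine.} Normalising level by level makes $a_n\le a_m$ exact. This is slightly cleaner than the paper's $a_n\le a_m/\rho_{m,h}$ followed by a liminf/limsup step. For the component formula, no supersaturation is needed: $|\mathcal F|-|P|>\operatorname{ex}_{P_i}(n)$ already lets the greedy procedure run, exactly as in Lemma~\ref{gen:components}.

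\textbf{The gap.} The counting upper bound fails as soon as copies of $P$ in $\mathcal W_{n,h}$ come in more than one coordinate pattern (type). Your averaging shows that a $\beta$-fraction of centred $m$-sections contain a copy in $\mathcal F$, but only a copy of \emph{some} type. A copy of type $\mathcal T$ lies in a fraction $e_{\mathcal T}(m)/e_{\mathcal T}(n)=\Theta_m(n^{-t(\mathcal T)}M_n^{-1})$ of the sections, and these fractions differ polynomially between types. Double counting therefore gives only the weighted bound $\sum_{\mathcal T}N_{\mathcal T}(\mathcal F)/e_{\mathcal T}(n)\ge c$, not $\Omega$ of the total number of copies. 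The rich sections may be witnessed entirely by low-dimensional copies, which are a polynomially small fraction of all copies: for the path poset $R$ with types $\mathcal T_3,\mathcal T_4$ this is $n^3M_n$ against $n^4M_n$.

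For the same reason, restricting the containers to one dominant type $\mathcal T^*$ does not work. A container for $\mathcal H_{\mathcal T^*}$ is only guaranteed to contain few copies of type $\mathcal T^*$. Nothing you have proved rules out a family of size $(\pi(P)+\epsilon)M_n$ with $o(e_{\mathcal T^*}(n))$ such copies, so you cannot conclude the container is small. Moreover, which type witnesses richness depends on $\mathcal F$, so it cannot be fixed in advance. Using the hypergraph of all copies instead would need $\xi$ polynomially small in $n$, which your sketch does not address.

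\textbf{How to close it.} Run the container theorem on every type hypergraph $\mathcal H_{\mathcal T}(n)$ separately and intersect the results. Each has average degree $\Theta(n^{t(\mathcal T)})$ and $\Delta_j=O(n^{t(\mathcal T)-s_j(\mathcal T)})$ with $s_j(\mathcal T)\ge\lceil\log_2 j\rceil\ge1$. Hence the single choice $\tau=Kn^{-\nu(P)}$ meets the codegree hypothesis for all types at once. Take $\xi<c/|\mathfrak T(P)|$, concatenate the fingerprints and intersect the containers. The resulting $\mathcal C$ satisfies $N_{\mathcal T}(\mathcal C)\le\xi e_{\mathcal T}(n)$ for every type simultaneously, and the weighted supersaturation then forces $|\mathcal C|<(\pi(P)+\eta)M_n$.

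\textbf{Two smaller points.}
\begin{enumerate}
\item For disconnected $P$ the number of varying coordinates of a copy is unbounded, since components can sit on disjoint coordinates. The type-based container plan therefore does not apply directly, and you should reduce counting to components. Your greedy argument already gives this: every $P$-free family is the union of at most $|P|$ sets and a $P_i$-free family for some $i$. Hence $\operatorname{forb}_P(n)\le 2^{O(n|P|)}\sum_i\operatorname{forb}_{P_i}(n)$.
\item The removal-lemma fallback is not available. The copy hypergraph has only $\Theta(n^{t}M_n)$ edges on $\Theta(M_n)$ vertices, far too sparse for dense hypergraph removal, and removal alone would not count the families in any case.
\end{enumerate}
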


For connected $P$ of height $h\ge2$ that has a copy in $\mathcal W_{n,h}$, Section~\ref{sec:general} defines a finite set $\mathfrak T(P)$ of types, one for each coordinate pattern of a copy. A type $\mathcal T$ is a family of $|P|$ subsets of a finite set, and we put
\[
\nu_{\mathcal T}=\min_{\substack{\mathcal J\subseteq\mathcal T\\|\mathcal J|\ge2}}
\frac{|\bigcup\mathcal J\setminus\bigcap\mathcal J|}{|\mathcal J|-1},\qquad
\nu(P)=\min_{\mathcal T\in\mathfrak T(P)}\nu_{\mathcal T},\qquad
\nu_+(P)=\max_{\mathcal T\in\mathfrak T(P)}\nu_{\mathcal T}.
\]
These are positive rationals determined by a finite computation. We prove supersaturation in which every type is weighted by its own number of copies, and we apply the container theorem to the types separately. This gives a largest $P$-free subfamily of $\mathcal W_{n,h}(p)$ of size $(\pi(P)+o(1))pM_n$ with high probability when $pn^{\nu(P)}\to\infty$, and deleting suitable subfamilies of copies gives size $(h+o(1))pM_n$ when $pn^{\nu_+(P)}\to0$ and $pM_n\to\infty$. Hence the threshold order is $n^{-\nu(P)}$ whenever $\nu(P)=\nu_+(P)$, in particular whenever $P$ has a single type. Otherwise these are two sufficient conditions, and we do not claim anything about the range in between. Posets with no copy in their height window are trivial: the whole window is then $P$-free.

We first present the Boolean case, where there is a single type and all parameters have simple expressions. For $d\ge1$ and $n\ge d$, the corresponding height window is given by
\[
k_{n,d+1}=\left\lfloor\frac{n-d}{2}\right\rfloor,\qquad
\mathcal W_{n,d+1}=\bigcup_{i=0}^{d}\binom{[n]}{k_{n,d+1}+i}.
\]
Thus $\mathcal W_{n,d+1}$ consists of $d+1$ consecutive levels centred as closely as possible around $n/2$; for $d=2$ it is $\binom{[n]}{\lfloor n/2\rfloor-1}\cup\binom{[n]}{\lfloor n/2\rfloor}\cup\binom{[n]}{\lfloor n/2\rfloor+1}$. Let
\[
\operatorname{ex}_d(n)=\max\{|\mathcal F|:\mathcal F\subseteq\mathcal W_{n,d+1}\text{ is }Q_d\text{-free}\},
\]
and let $\operatorname{forb}_d(n)$ be the number of $Q_d$-free subfamilies of $\mathcal W_{n,d+1}$. We show that the extremal density of this problem, rather than the integer $e(Q_d)=d$, is the constant that governs supersaturation, enumeration and the random version inside $\mathcal W_{n,d+1}$. Our first Boolean result shows that the extremal density exists and gives a supersaturation statement.

\begin{theorem}\label{thm:1.1}
For every fixed $d\ge1$, the limit
\[
\pi_d=\lim_{n\to\infty}\frac{\operatorname{ex}_d(n)}{M_n}
\]
exists. Moreover, for every $\epsilon>0$ there are $c=c(d,\epsilon)>0$ and $n_0=n_0(d,\epsilon)$ such that for every $n\ge n_0$, every $\mathcal F\subseteq\mathcal W_{n,d+1}$ with $|\mathcal F|\ge(\pi_d+\epsilon)M_n$ contains at least $cn^dM_n$ copies of $Q_d$.
\end{theorem}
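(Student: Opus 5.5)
The plan is to realise both the existence of the limit and supersaturation through one averaging device: random copies of the window $\mathcal W_{m,d+1}$ inside $\mathcal W_{n,d+1}$ for a fixed large $m$.

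\textbf{Sampling sublattices.} For $n\ge m$, choose a set $S\subseteq[n]$ of size $k_{n,d+1}-k_{m,d+1}$ and a disjoint set $T$ of size $m$ uniformly at random. The map $X\mapsto S\cup X$ then embeds $\mathcal W_{m,d+1}(T)\cong\mathcal W_{m,d+1}$ into $\mathcal W_{n,d+1}$ and preserves inclusion. For each level $i$, a fixed set $A$ in level $k_{n,d+1}+i$ lies in the image with probability
\[
q_i=\binom{m}{k_{m,d+1}+i}\Big/\binom{n}{k_{n,d+1}+i},
\]
by symmetry. For $d$ fixed, $i\le d$ and $m,n\to\infty$, the ratios $\binom{n}{k_{n,d+1}+i}/M_n$ and $\binom{m}{k_{m,d+1}+i}/M_m$ tend to $1$. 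Hence $q_i=(1+o_m(1))M_m/M_n$ uniformly in $i$.

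\textbf{Existence of the limit.} Take $\mathcal F$ extremal in $\mathcal W_{n,d+1}$ and restrict it to a random copy of $\mathcal W_{m,d+1}$. The restriction is $Q_d$-free, so its expected size is at most $\operatorname{ex}_d(m)$. The expected size equals $\sum_i q_i|\mathcal F_i|=(1+o(1))|\mathcal F|M_m/M_n$. This gives
\[
\frac{\operatorname{ex}_d(n)}{M_n}\le (1+o_m(1))\frac{\operatorname{ex}_d(m)}{M_m}
\]
for all $n\ge m$. Therefore $\limsup_n \operatorname{ex}_d(n)/M_n\le \liminf_m \operatorname{ex}_d(m)/M_m$, and the limit $\pi_d$ exists. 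The sequence lies in $[0,d+1+o(1)]$, so it is bounded.

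\textbf{Supersaturation.} Given $\epsilon$, fix $m$ large enough that $\operatorname{ex}_d(m)\le(\pi_d+\epsilon/3)M_m$ and that the $o_m(1)$ errors are below $\epsilon/(3(d+1))$. Suppose $|\mathcal F|\ge(\pi_d+\epsilon)M_n$. Then the expected size of $\mathcal F$ restricted to a random copy is at least $(\pi_d+2\epsilon/3)M_m$. That size never exceeds $|\mathcal W_{m,d+1}|\le(d+1)M_m$, so a Markov-type argument applies. With probability at least $\delta=\epsilon/(3(d+1))$, the restriction has size above $\operatorname{ex}_d(m)$ and therefore contains at least one copy of $Q_d$.

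Now double count pairs (copy of $Q_d$ in $\mathcal F$, sampled sublattice containing it). Every copy of $Q_d$ in $\mathcal W_{n,d+1}$ is a subcube $[A,A\cup D]$ with $|D|=d$ and $|A|=k_{n,d+1}$, because the copy must use all $d+1$ levels. Its chance of lying in a random sampled copy is
\[
\Pr[S=A]\cdot\Pr[D\subseteq T]\approx \binom{n}{k_{n,d+1}}^{-1}\left(\frac{m}{n}\right)^{d}\cdot c_m,
\]
where the equality $S=A$ is forced since both have size $k_{n,d+1}$. Hence
\[
\#\text{copies}\cdot \Theta_m\bigl(n^{-d}M_n^{-1}\bigr)\ \ge\ \mathbb E[\#\text{copies in sample}]\ \ge\ \delta ,
\]
which yields at least $c\,n^dM_n$ copies, with $c=c(d,\epsilon)$.

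\textbf{Main obstacle.} The delicate step is this last computation. It needs the exact probability that a fixed subcube lands in the random embedded window, with uniform control in $n$. The bottom $A$ must equal $S\cup X$ where $|X|=k_{m,d+1}$, and the directions $D$ must lie in $T$. I would compute this cleanly as
\[
\binom{n}{|S|,\,m,\,n-|S|-m}^{-1}\cdot\#\{(S,T)\}=\binom{k_{n,d+1}}{|S|}\binom{n-k_{n,d+1}}{m-k_{m,d+1}}\bigl/\cdots,
\]
and check that it is $\Theta_m(n^{-d}M_n^{-1})$. The level-ratio estimates $q_i$ are routine by comparison.
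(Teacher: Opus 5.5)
Your plan is the paper's proof. It uses random centred $m$-sections $(S,T)$, the bound $\operatorname{ex}_d(n)/M_n\le(1+o_m(1))\operatorname{ex}_d(m)/M_m$ to get $\limsup\le\liminf$, a proportion at least $\epsilon/(3(d+1))$ of rich sections, and a double count of (copy, section) pairs.

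\textbf{The flawed step.} The step that is wrong as written is the one you flag as the main obstacle. Since $|S|=k_{n,d+1}-k_{m,d+1}$, the equality $S=A$ is not forced. A subcube $\mathcal Q(A,D)$ lies in the section exactly when $S\subseteq A$ and $(A\setminus S)\cup D\subseteq T$. Your displayed count $\binom{k_{n,d+1}}{|S|}\binom{n-k_{n,d+1}}{m-k_{m,d+1}}$ drops the condition $D\subseteq T$. It counts only the sections containing the bottom set $A$, so it gives probability $\Theta_m(M_n^{-1})$ rather than $\Theta_m(n^{-d}M_n^{-1})$, and the check you propose would fail. The correct count is $\binom{k_{n,d+1}}{k_{m,d+1}}\binom{n-k_{n,d+1}-d}{m-k_{m,d+1}-d}$. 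Dividing by $\binom{n}{|S|}\binom{n-|S|}{m}$ gives exactly $e(\mathcal H_{m,d})/e(\mathcal H_{n,d})=\Theta_m(n^{-d}M_n^{-1})$, and then your conclusion follows.

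\textbf{How the paper avoids this.} The paper never computes this probability. Permutations of $[n]$ act transitively on canonical subcubes and permute the sections, so every subcube lies in the same number $s_{n,m}$ of sections. Counting (section, subcube) pairs gives $|\mathfrak S_{n,m}|\,e(\mathcal H_{m,d})=s_{n,m}\,e(\mathcal H_{n,d})$, and hence $N_d(\mathcal F)\ge\beta\,e(\mathcal H_{n,d})/e(\mathcal H_{m,d})$ directly.

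\textbf{A smaller gap.} Using all $d+1$ levels does not by itself make a copy a subcube. You need the short argument of Lemma~\ref{lem:2.1}: the images of the singletons are $A\cup\{x_i\}$ with distinct $x_i$, and comparing sizes forces $\varphi(J)=A\cup\{x_i:i\in J\}$ for $J\subseteq[d]$. This is also what makes all copies a single orbit under the symmetry argument.
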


We shall see that every weak copy of $Q_d$ in $\mathcal W_{n,d+1}$ is a subcube of the form $\{A\cup Y:Y\subseteq X\}$ with $|A|=k_{n,d+1}$ and $|X|=d$. The supersaturation part of Theorem~\ref{thm:1.1} is proved by averaging over lower-dimensional sections of $\mathcal W_{n,d+1}$. Our second result is a counting theorem.

\begin{theorem}\label{thm:1.2}
For every fixed $d\ge1$,
\[
\operatorname{forb}_d(n)=2^{(\pi_d+o(1))M_n}.
\]
\end{theorem}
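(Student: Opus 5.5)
The lower bound is immediate. Take a $Q_d$-free family $\mathcal F\subseteq\mathcal W_{n,d+1}$ of size $\operatorname{ex}_d(n)=(\pi_d+o(1))M_n$, which exists by Theorem~\ref{thm:1.1}. Every subfamily of $\mathcal F$ is again $Q_d$-free, so $\operatorname{forb}_d(n)\ge 2^{\operatorname{ex}_d(n)}=2^{(\pi_d+o(1))M_n}$. The work is in the upper bound, which I would obtain from the hypergraph container method together with a \emph{balanced} form of the supersaturation in Theorem~\ref{thm:1.1}.

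Consider the $2^d$-uniform hypergraph $\mathcal H$ on vertex set $\mathcal W_{n,d+1}$ whose edges are the copies of $Q_d$. As noted after Theorem~\ref{thm:1.1}, these copies are exactly the subcubes $\{A\cup Y:Y\subseteq X\}$ with $|A|=k_{n,d+1}$ and $|X|=d$. Their number is $\Theta(n^dM_n)$, so the average degree is $\Theta(n^d)$. I would compute the codegrees directly. Let $\mathcal J$ be a set of $j\ge2$ vertices of one subcube, and let $s=|\bigcup\mathcal J\setminus\bigcap\mathcal J|$. Then $\mathcal J$ already determines $s$ coordinates of $X$ and pins down $A$ up to the remaining $d-s$ free coordinates, so $\deg_{\mathcal H}(\mathcal J)=O(n^{d-s})$. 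The span of $j$ distinct vertices of a cube has dimension at least $\log_2 j$, hence $s\ge\lceil\log_2 j\rceil$, and more relevantly $s/(j-1)\ge d/(2^d-1)$ (this is $\nu_{\mathcal T}$ for the single Boolean type). The last inequality holds because a set of $j$ points in an $s$-dimensional cube satisfies $j-1\le 2^s-1\le (2^d-1)s/d$ for $s\le d$.

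Next comes the balanced supersaturation. Given $\mathcal F$ with $|\mathcal F|\ge(\pi_d+\epsilon)M_n$, I want a subcollection $\mathcal C$ of at least $cn^dM_n$ copies inside $\mathcal F$ such that every $j$-set lies in $O(n^{d-s})$ members of $\mathcal C$. Since the codegree bound above already holds in the full hypergraph $\mathcal H$, I can simply take all copies in $\mathcal F$: Theorem~\ref{thm:1.1} gives $|\mathcal C|\ge cn^dM_n$, and the maximum codegree of $\mathcal H[\mathcal F]$ is at most that of $\mathcal H$. So no separate balancing argument is needed.

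Then I apply the container theorem of Balogh--Morris--Samotij / Saxton--Thomason with parameter $\tau=n^{-d/(2^d-1)}\cdot\omega$, where $\omega$ is a slowly growing factor. The condition $\Delta_j(\mathcal H[\mathcal F])\le c'\tau^{j-1}e(\mathcal H[\mathcal F])/v(\mathcal H)$ becomes $n^{d-s}\lesssim \tau^{j-1}n^d$, that is, $\tau^{j-1}\gtrsim n^{-s}$, and this holds because $s/(j-1)\ge d/(2^d-1)$. Iterating containers down to families with fewer than $\epsilon' n^dM_n$ copies, I get a collection of $2^{O(\tau\log(1/\tau)\,M_n)}=2^{o(M_n)}$ containers. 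Each container has fewer than $\epsilon'n^dM_n$ copies, so by the contrapositive of Theorem~\ref{thm:1.1} it has size at most $(\pi_d+\epsilon)M_n$. Every $Q_d$-free family lies in some container, so
\[
\operatorname{forb}_d(n)\le 2^{o(M_n)}\cdot 2^{(\pi_d+\epsilon)M_n}.
\]
Letting $\epsilon\to0$ finishes the proof.

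The main obstacle is that containers must be iterated until the copy count drops below a \emph{fixed} fraction $\epsilon' n^dM_n$. At that point the supersaturation constant $c(d,\epsilon)$ must be matched against the container threshold, and the codegree condition must be checked uniformly for all intermediate families. This is the standard ``supersaturation with constant $c$ implies containers'' argument. It goes through because the codegree bounds come from $\mathcal H$ itself, while the lower bound on edges comes uniformly from Theorem~\ref{thm:1.1}.
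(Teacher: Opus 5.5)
Your proposal is correct and follows essentially the paper's route: codegrees $O(n^{d-\lceil\log_2 j\rceil})$ in the subcube hypergraph, $\tau\asymp n^{-d/(2^d-1)}$, supersaturation from Theorem~\ref{thm:1.1} to bound container sizes, and a $2^{o(M_n)}$ count of containers. The only difference is cosmetic. The paper applies the Saxton--Thomason theorem once to the whole hypergraph $\mathcal H_{n,d}$ with $\xi=\min\{1/4,c/2\}$ and counts fingerprint tuples, so the iteration you describe, and your uniform check of the codegree condition on intermediate families, are absorbed into that theorem.
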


The lower bound follows from an extremal family and its subfamilies. For the upper bound we view the subcubes as the edges of a $2^d$-uniform hypergraph on $\mathcal W_{n,d+1}$, compute its codegrees and apply the hypergraph container method \cite{Balogh2015Independent,Saxton2012HypergraphC}. The same codegree computation gives the threshold in the random version. Let $\mathcal W_{n,d+1}(p)$ be the random subfamily of $\mathcal W_{n,d+1}$ in which every member is retained independently with probability $p$, and let $\alpha_d(\mathcal W_{n,d+1}(p))$ be the maximum size of a $Q_d$-free subfamily of $\mathcal W_{n,d+1}(p)$.

\begin{theorem}\label{thm:1.3}
Fix $d\ge1$ and let $\gamma_d=d/(2^d-1)$.
\begin{enumerate}
\item[\textup{(i)}] If $pn^{\gamma_d}\to\infty$, then with high probability
$\alpha_d(\mathcal W_{n,d+1}(p))=(\pi_d+o(1))pM_n$.
\item[\textup{(ii)}] If $pn^{\gamma_d}\to0$ and $pM_n\to\infty$, then with high probability
$\alpha_d(\mathcal W_{n,d+1}(p))=(d+1+o(1))pM_n$.
\end{enumerate}
\end{theorem}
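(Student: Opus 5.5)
We prove the two parts separately. Part (i) is a container argument driven by the supersaturation in Theorem~\ref{thm:1.1}. Part (ii) deletes one member from every copy of $Q_d$ that survives in the random family.

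\emph{Codegrees.} Let $\mathcal H$ be the $2^d$-uniform hypergraph on $\mathcal W_{n,d+1}$ whose edges are the subcubes $\{A\cup Y:Y\subseteq X\}$ with $|A|=k_{n,d+1}$ and $|X|=d$. Every vertex lies in $O(n^d)$ edges, so $e(\mathcal H)=\Theta(n^dM_n)$. Now fix $j\ge2$ distinct vertices. A subcube containing them has bottom $A\subseteq\bigcap\mathcal J$ and top $A\cup X\supseteq\bigcup\mathcal J$. Hence $X\supseteq\bigcup\mathcal J\setminus\bigcap\mathcal J$, and the $j$ vertices span a subcube of dimension $t=|\bigcup\mathcal J\setminus\bigcap\mathcal J|$, which forces $j\le2^t$. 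The remaining $d-t$ free coordinates give $\Delta_j=O(n^{d-t})$, and this is maximised at the smallest admissible $t=\lceil\log_2 j\rceil$. The container lemma (Balogh--Morris--Samotij / Saxton--Thomason) needs $\Delta_j\le c\,\tau^{j-1}e(\mathcal H)/v(\mathcal H)=c\,\tau^{j-1}n^d$, so we need $\tau^{j-1}\gtrsim n^{-\lceil\log_2 j\rceil}$ for all $2\le j\le 2^d$. I expect the main obstacle to be checking that the binding constraint is $j=2^d$ and $t=d$, which gives $\tau=n^{-d/(2^d-1)+o(1)}$. This amounts to showing $\lceil\log_2 j\rceil/(j-1)\ge d/(2^d-1)$ for $j\le 2^d$. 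For $j=2^t$ the ratio $t/(2^t-1)$ is decreasing in $t$. For general $j$, the ratio is at least $t/(2^t-1)$ with $t=\lceil\log_2 j\rceil\le d$, so it is at least $d/(2^d-1)$. Thus $\tau=Cn^{-\gamma_d}$ suffices, up to constants.

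\emph{Part (i), upper bound.} Iterate the container lemma using Theorem~\ref{thm:1.1}: any set of size at least $(\pi_d+\epsilon)M_n$ spans $\ge c\,n^dM_n$ edges, which is a positive fraction of $e(\mathcal H)$, and its induced subhypergraph keeps the codegree bounds above. This produces a collection $\mathcal C$ of containers with three properties: every $Q_d$-free family lies in some $C\in\mathcal C$; each $|C|\le(\pi_d+\epsilon)M_n$; and $|\mathcal C|\le 2^{O(\tau\log(1/\tau)M_n)}=2^{o(pM_n)}$ when $p\gg\tau\log n$. The $\log$ factor should disappear by running the iteration with $\tau=\epsilon' p$, or by using the refined fingerprint count $\binom{M_n}{\le\tau M_n}$; either way the condition $pn^{\gamma_d}\to\infty$ is enough. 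Each container meets $\mathcal W_{n,d+1}(p)$ in at most $(\pi_d+2\epsilon)pM_n$ elements with probability $1-e^{-\Omega(\epsilon^2pM_n)}$ by Chernoff. A union bound over $\mathcal C$ then gives the upper bound. More precisely, the standard version bounds the probability that some $Q_d$-free set of size $m=(\pi_d+3\epsilon)pM_n$ survives, by summing over a fingerprint $S$ (which must be present, costing $p^{|S|}$) together with its container.

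\emph{Part (i), lower bound.} Take an extremal family $\mathcal F$ of size $(\pi_d+o(1))M_n$. Then $\mathcal F\cap\mathcal W_{n,d+1}(p)$ is $Q_d$-free and has size $(\pi_d+o(1))pM_n$ w.h.p., since $pM_n\to\infty$.

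\emph{Part (ii).} The upper bound is trivial: $|\mathcal W_{n,d+1}|=(d+1+o(1))M_n$, so $|\mathcal W_{n,d+1}(p)|=(d+1+o(1))pM_n$ w.h.p. For the lower bound, the expected number of subcubes surviving in the random family is $\Theta(n^dM_np^{2^d})$. Compared with $pM_n$ this ratio is $\Theta(n^dp^{2^d-1})=\Theta\bigl((pn^{\gamma_d})^{2^d-1}\bigr)\to0$. By Markov's inequality, w.h.p. the number of surviving copies is $o(pM_n)$. Deleting one element from each surviving copy leaves a $Q_d$-free family of size $(d+1-o(1))pM_n$.
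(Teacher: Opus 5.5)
Your overall route is the paper's: the codegree bound $\Delta_j=O(n^{d-\lceil\log_2 j\rceil})$ and the inequality $\lceil\log_2 j\rceil\ge\gamma_d(j-1)$ (Lemmas~\ref{lem:5.1} and~\ref{lem:5.2}), containers built from the supersaturation of Theorem~\ref{thm:1.1} with $\tau=Kn^{-\gamma_d}$, a fingerprint-weighted union bound, and the deletion argument for (ii), which matches Proposition~\ref{prop:sparse_regime} exactly. The paper applies the Saxton--Thomason corollary once with $\xi=\min\{1/4,c/2\}$ instead of iterating, but that difference is cosmetic.

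The step in (i) that does not hold as you wrote it is the removal of the logarithm. A union bound over containers needs $\log|\mathcal C|=o(pM_n)$. The bound $|\mathcal C|\le 2^{O(\tau\log(1/\tau)M_n)}$ gives this only when $pn^{\gamma_d}/\log n\to\infty$, and neither of your two remedies fixes that on its own. The count $\binom{M_n}{\le\tau M_n}$ is itself $2^{\Theta(\tau\log(1/\tau)M_n)}$. With $\tau=\epsilon'p$, plain counting gives $2^{\Theta(\epsilon'p\log(1/p)M_n)}$, which loses to the Chernoff bound $e^{-\Omega(\epsilon^2pM_n)}$ whenever $p\to0$.

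What actually works is the mechanism in your last sentence, and it has to be carried out. Let $R=\mathcal C(\mathbf T)\setminus T$. It is determined by the fingerprint and disjoint from $T=\bigcup T_i$, so conditional on $T\subseteq X$ the variable $|X\cap R|$ is binomial and Chernoff applies. Then sum over fingerprints with weight $p^t$; these are ordered tuples, since the container is a function of the tuple, and they are counted by $\binom vt(2^b-1)^t$. The needed estimate is Lemma~\ref{lem:weighted_fingerprint}: with $x=p/\tau\to\infty$ and $s=O(\tau M_n)$,
\[
\sum_{t\le s}\binom vt(Ap)^t\le x^s(1+A\tau)^v=\exp\bigl(O(\tau M_n\log x)+O(\tau M_n)\bigr)=\exp(o(pM_n)),
\]
because $\tau\log(p/\tau)=o(p)$. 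This is exactly where the hypothesis $pn^{\gamma_d}\to\infty$, rather than $pn^{\gamma_d}/\log n\to\infty$, is used. Once the $p^t$ weight is in place, your choice $\tau=\epsilon'p$ with small $\epsilon'$ would also work, but it is the weight, not the choice of $\tau$, that removes the logarithm.
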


Thus $n^{-d/(2^d-1)}$ is the order of the threshold at which the subcubes start to reduce the largest $Q_d$-free subfamily by a constant factor. For the diamond the exponent is $2/3$, and Theorems~\ref{thm:1.1}--\ref{thm:1.3} together with the bounds of \cite{Balogh2014Upper,Tompkins2026Improved} give the following.

\begin{corollary}\label{cor:diamond}
Let $\pi_{\diamond}=\pi_2$. Then
\[
2+\frac{29}{135}\prod_{i=1}^{\infty}(1-4^{-i})\le\pi_{\diamond}\le2.15121.
\]
Moreover, $\operatorname{forb}_2(n)=2^{(\pi_{\diamond}+o(1))M_n}$, and if $p\gg n^{-2/3}$, then with high probability the largest diamond-free subfamily of $\mathcal W_{n,3}(p)$ has size $(\pi_{\diamond}+o(1))pM_n$.
\end{corollary}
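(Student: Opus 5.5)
The corollary follows by combining Theorems~\ref{thm:1.1}--\ref{thm:1.3} for $d=2$ with the known bounds in three middle levels. The plan below checks each part in turn.

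\emph{The window.} For $d=2$ we have $k_{n,3}=\lfloor (n-2)/2\rfloor=\lfloor n/2\rfloor-1$. Hence $\mathcal W_{n,3}$ is exactly the union of the three levels $\lfloor n/2\rfloor-1,\lfloor n/2\rfloor,\lfloor n/2\rfloor+1$. These are the three consecutive middle levels treated in \cite{Balogh2014Upper,Tompkins2026Improved}, and a diamond is $Q_2$. Theorem~\ref{thm:1.1} gives that $\pi_\diamond=\pi_2=\lim \operatorname{ex}_2(n)/M_n$ exists.

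\emph{Upper bound.} Balogh, Hu, Lidick\'y and Liu show that every diamond-free family inside these three levels has size at most $(2.15121+o(1))M_n$. So $\operatorname{ex}_2(n)\le(2.15121+o(1))M_n$, and letting $n\to\infty$ gives $\pi_\diamond\le 2.15121$.

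\emph{Lower bound.} Tompkins' construction lies in the same three levels, is diamond-free, and has size $(2+\tfrac{29}{135}\prod_{i\ge1}(1-4^{-i})+o(1))M_n$. This gives the lower bound on $\operatorname{ex}_2(n)/M_n$, and hence on the limit.

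\emph{Alignment of levels.} The one point to check carefully is that both results refer to the same three levels as $\mathcal W_{n,3}$ for all $n$, including odd $n$. If a source used the levels $\lceil n/2\rceil-1,\lceil n/2\rceil,\lceil n/2\rceil+1$ instead, apply complementation $A\mapsto[n]\setminus A$. This reverses inclusion, so it maps diamonds to diamonds and maps that triple of levels onto ours. The ratio of level sizes to $M_n$ is $1+o(1)$ either way, so the constants are unaffected.

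\emph{Counting and random statements.} The statement $\operatorname{forb}_2(n)=2^{(\pi_\diamond+o(1))M_n}$ is Theorem~\ref{thm:1.2} with $d=2$. For the random statement, $\gamma_2=2/(2^2-1)=2/3$. So $p\gg n^{-2/3}$ means $pn^{\gamma_2}\to\infty$, and Theorem~\ref{thm:1.3}(i) gives $\alpha_2(\mathcal W_{n,3}(p))=(\pi_\diamond+o(1))pM_n$ with high probability.

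The only step needing any care is the level alignment above; everything else is direct substitution.
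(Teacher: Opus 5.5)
Your proposal is correct and follows the paper's own argument: specialise Theorems~\ref{thm:1.1}--\ref{thm:1.3} to $d=2$, where $\gamma_2=2/3$, and insert the three-middle-level bounds of \cite{Balogh2014Upper} and \cite{Tompkins2026Improved}. Your complementation remark for aligning the levels when $n$ is odd is the same device the paper uses for Patk\'os's $d=3$ bound.
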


Numerically, $2.1479080635\dots\le\pi_{\diamond}\le2.15121$. Note that the proofs do not use the value of $\pi_d$ or any structural information about extremal families, so any improvement of the bounds on $\pi_d$ transfers directly to Theorems~\ref{thm:1.2} and~\ref{thm:1.3}. Part (ii) of Theorem~\ref{thm:1.3} follows from the deletion argument of Gerbner, Nagy, Patk\'os and Vizer~\cite[Proposition~5]{Gerbner2021Supersaturation}, whose parameters satisfy $e(Q_d)=d$ and $x(Q_d)=d$; we include the short proof for completeness. The new part is (i), the dense regime inside $\mathcal W_{n,d+1}$ with the constant $\pi_d$.

The rest of the paper is organised as follows. Section~\ref{sec2} introduces canonical subcubes and centred sections, proves the existence of $\pi_d$, and relates it to the full-cube vertex Tur\'an density. Section~\ref{sec4} proves Boolean supersaturation and the codegree estimates. Sections~\ref{sec6} and~\ref{sec8} establish the Boolean enumeration and random results. Section~\ref{sec:general} treats connected graded posets, and Section~\ref{sec:examples} treats disconnected posets, computes the parameters for several examples and compares them with the parameter $\mu(P)$ of \cite{Jiang2026Number}.

\section{Canonical subcubes and the extremal density}\label{sec2}

For disjoint sets $A,X\subseteq[n]$, let
\[
\mathcal Q(A,X)=\{A\cup Y:Y\subseteq X\}.
\]
If $|X|=d$, then $\mathcal Q(A,X)$ is a copy of $Q_d$. We call $\mathcal Q(A,X)$ a canonical subcube of $\mathcal W_{n,d+1}$ if $|A|=k_{n,d+1}$ and $|X|=d$. For $d=2$, the canonical subcubes are exactly the rank-two intervals $\{A,A\cup\{x\},A\cup\{y\},A\cup\{x,y\}\}$ in the three middle levels.

\begin{lemma}\label{lem:2.1}
Every weak copy of $Q_d$ in $\mathcal W_{n,d+1}$ is a canonical subcube. In particular, weak and induced copies of $Q_d$ in $\mathcal W_{n,d+1}$ coincide.
\end{lemma}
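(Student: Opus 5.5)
Let $f\colon 2^{[d]}\to\mathcal W_{n,d+1}$ be an injective order-preserving map, and put $k=k_{n,d+1}$. The plan has three steps: pin down ranks, show the image is the interval $[f(\emptyset),f([d])]$, and check that this interval equals $\mathcal Q(A,X)$ as a poset.

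\textbf{Step 1: ranks are forced.} Take any maximal chain $\emptyset=S_0\subset S_1\subset\dots\subset S_d=[d]$ in $Q_d$. Its image $f(S_0)\subsetneq f(S_1)\subsetneq\dots\subsetneq f(S_d)$ is a strictly increasing chain of $d+1$ sets. These sets lie in only $d+1$ levels $k,\dots,k+d$, so $|f(S_i)|=k+i$ for every $i$. Since every $S\subseteq[d]$ lies on some maximal chain, we get $|f(S)|=k+|S|$ for all $S$.

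Now set $A=f(\emptyset)$, $B=f([d])$ and $X=B\setminus A$. Then $|A|=k$, $|X|=d$ and $A\subseteq f(S)\subseteq B$ for all $S$. The interval $\{Z: A\subseteq Z\subseteq B\}$ is exactly $\mathcal Q(A,X)$, and it has $2^d$ elements. Since $f$ is injective with $2^d$ values inside this interval, its image is all of $\mathcal Q(A,X)$. So the subfamily is the canonical subcube $\mathcal Q(A,X)$.

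\textbf{Step 2: weak and induced copies coincide.} Write $f(S)=A\cup g(S)$, where $g\colon 2^{[d]}\to 2^X$ is an order-preserving bijection with $|g(S)|=|S|$. We must show that $g(S)\subseteq g(T)$ implies $S\subseteq T$. I expect this to be the only point needing care.

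The counting argument is as follows. Since $g$ is order-preserving and preserves ranks, it maps the $|S|$ elements covered by $S$ to subsets of $g(S)$ of size $|S|-1$. These images are distinct by injectivity, and $g(S)$ has exactly $|S|$ such subsets. Hence $g$ restricted to the lower covers of $S$ is a bijection onto the lower covers of $g(S)$. By induction down the ranks, $g$ maps $\{S':S'\subseteq S\}$ onto $\{Z:Z\subseteq g(S)\}$. So if $g(S)\subseteq g(T)$, then $g(S)=g(S')$ for some $S'\subseteq T$, and injectivity gives $S=S'\subseteq T$.

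Therefore $g$ is a poset isomorphism, and the copy is induced. Conversely, every canonical subcube is evidently an induced copy, so weak and induced copies coincide.
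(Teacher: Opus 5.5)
Your proof is correct. Step 1 matches the paper's rank argument, but you identify the image by a different route.

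The paper uses the atoms. It writes $\varphi(\{i\})=A\cup\{x_i\}$ with distinct $x_i$. Since $A\cup\{x_i:i\in S\}\subseteq\varphi(S)$ and both sides have size $k_{n,d+1}+|S|$, it gets the explicit formula $\varphi(S)=A\cup\{x_i:i\in S\}$. That formula shows at once that $\varphi$ is an isomorphism onto $\mathcal Q(A,\{x_1,\dots,x_d\})$, so the induced assertion needs no further work.

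You use the bottom and top elements instead. All $2^d$ images are trapped in the interval between $f(\emptyset)$ and $f([d])$, which has exactly $2^d$ members. This identifies the image in one line, but it says nothing about how $f$ acts on individual sets, which is why you need Step 2.

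Because the paper counts copies as subfamilies, Step 2 is more than the lemma requires. Once the image is $\mathcal Q(A,X)$, the order induced on it is $Q_d$ via $Z\mapsto Z\setminus A$, which is exactly your final sentence. If you do want the stronger statement that $f$ itself is an embedding, you can drop the induction over lower covers. The map $g$ sends the $2^{|S|}$ subsets of $S$ injectively into the $2^{|g(S)|}=2^{|S|}$ subsets of $g(S)$, so it is onto them.
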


\begin{proof}
Let $\varphi:Q_d\to\mathcal W_{n,d+1}$ be an injective order-preserving map. A maximal chain of $Q_d$ has $d+1$ elements and $\mathcal W_{n,d+1}$ has exactly $d+1$ levels, so the image of every maximal chain contains one set from each level. Since every $S\subseteq[d]$ lies in a maximal chain, we have $|\varphi(S)|=k_{n,d+1}+|S|$. Let $A=\varphi(\emptyset)$. For each $i\in[d]$ there is $x_i\notin A$ with $\varphi(\{i\})=A\cup\{x_i\}$, and $x_1,\dots,x_d$ are distinct since $\varphi$ is injective. For $S\subseteq[d]$, order preservation gives $A\cup\{x_i:i\in S\}\subseteq\varphi(S)$, and both sides have size $k_{n,d+1}+|S|$. Hence $\varphi(S)=A\cup\{x_i:i\in S\}$ and $\varphi(Q_d)=\mathcal Q(A,\{x_1,\dots,x_d\})$. Clearly, the order induced on a canonical subcube is $Q_d$, which proves the last assertion.
\end{proof}

Let $r=2^d$ and let $\mathcal H_{n,d}$ be the $r$-uniform hypergraph with vertex set $\mathcal W_{n,d+1}$ whose edges are the canonical subcubes. By Lemma~\ref{lem:2.1}, the independent sets of $\mathcal H_{n,d}$ are exactly the $Q_d$-free subfamilies of $\mathcal W_{n,d+1}$, so
\[
\alpha(\mathcal H_{n,d})=\operatorname{ex}_d(n),\qquad i(\mathcal H_{n,d})=\operatorname{forb}_d(n),
\]
where $\alpha$ and $i$ denote the independence number and the number of independent sets.

\begin{lemma}\label{lem:2.2}
For every fixed $d\ge1$,
\[
v(\mathcal H_{n,d})=(d+1+o(1))M_n,\qquad
e(\mathcal H_{n,d})=\binom{n}{k_{n,d+1}}\binom{n-k_{n,d+1}}{d}=\left(\frac{1}{2^dd!}+o(1)\right)n^dM_n.
\]
\end{lemma}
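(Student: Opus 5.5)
The statement asks for two estimates, and both come from counting directly. For the vertex count we sum level sizes over the $d+1$ levels of $\mathcal W_{n,d+1}$. For the edge count we count canonical subcubes through their bottom element and direction set, as allowed by Lemma~\ref{lem:2.1}. The only analytic input we need is that binomial coefficients within a bounded distance of the middle are asymptotic to $M_n$.

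\textbf{Vertices.} We have $v(\mathcal H_{n,d})=\sum_{i=0}^{d}\binom{n}{k_{n,d+1}+i}$. Each index $k_{n,d+1}+i$ differs from $\lfloor n/2\rfloor$ by at most $d$, since $k_{n,d+1}=\lfloor (n-d)/2\rfloor$. It therefore suffices to show that $\binom{n}{\lfloor n/2\rfloor+j}/M_n\to1$ for each fixed integer $j$. This follows from the ratio of consecutive binomial coefficients,
\[
\frac{\binom{n}{m+1}}{\binom nm}=\frac{n-m}{m+1},
\]
which equals $1+O(1/n)$ when $m=n/2+O(1)$. Multiplying at most $|j|$ such factors gives $1+O(|j|/n)$. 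Summing the $d+1$ terms yields $(d+1+o(1))M_n$.

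\textbf{Edges.} By definition, the edges of $\mathcal H_{n,d}$ are the canonical subcubes $\mathcal Q(A,X)$ with $|A|=k_{n,d+1}$, $|X|=d$ and $A\cap X=\emptyset$. We first check that the pair $(A,X)$ is recovered from the family $\mathcal Q(A,X)$. Here $A$ is the unique minimal element, and $X$ is the difference between the unique maximal element and $A$. Hence distinct pairs give distinct edges, and
\[
e(\mathcal H_{n,d})=\binom{n}{k_{n,d+1}}\binom{n-k_{n,d+1}}{d},
\]
obtained by choosing $A$ first and then $X$ from the complement of $A$.

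For the asymptotics, the vertex step gives $\binom{n}{k_{n,d+1}}=(1+o(1))M_n$. Since $n-k_{n,d+1}=n/2+O(d)$, we also have
\[
\binom{n-k_{n,d+1}}{d}=\frac{(n/2)^d}{d!}(1+o(1))=\left(\frac{1}{2^dd!}+o(1)\right)n^d.
\]
Multiplying the two estimates gives the claimed formula.

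The only step that needs any care is the uniform asymptotic $\binom{n}{\lfloor n/2\rfloor+j}\sim M_n$ for bounded $j$, and the ratio bound above handles it immediately. Nothing else is an obstacle.
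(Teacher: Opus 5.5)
Your proposal is correct and follows essentially the same route as the paper. You sum the $d+1$ level sizes, each $(1+o(1))M_n$, and count canonical subcubes by choosing $A$ and then $X$ from the complement of $A$. Your explicit check that $(A,X)$ is recovered from $\mathcal Q(A,X)$, so that distinct pairs give distinct edges, and your ratio argument for $\binom{n}{\lfloor n/2\rfloor+j}\sim M_n$ spell out details that the paper leaves implicit.
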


\begin{proof}
Every level whose index differs from $n/2$ by $O_d(1)$ has size $(1+o(1))M_n$, which gives the first formula. A canonical subcube $\mathcal Q(A,X)$ is determined by $A$ and $X$, and there are $\binom{n}{k_{n,d+1}}$ choices for $A$ and $\binom{n-k_{n,d+1}}{d}$ choices for $X$. Since $\binom{n}{k_{n,d+1}}=(1+o(1))M_n$ and $n-k_{n,d+1}=n/2+O_d(1)$, the second formula follows.
\end{proof}

We now introduce the averaging device used in the paper. Fix integers $n\ge m\ge d$. A centred $m$-section of $\mathcal W_{n,d+1}$ is a pair $(B,Y)$ of disjoint subsets of $[n]$ with
\[
|Y|=m,\qquad |B|=k_{n,d+1}-k_{m,d+1}.
\]
Such pairs exist since $0\le k_{n,d+1}-k_{m,d+1}\le n-m$. The family associated with $(B,Y)$ is
\[
\mathcal W(B,Y)=\{B\cup Z:Z\subseteq Y,\ k_{m,d+1}\le|Z|\le k_{m,d+1}+d\}\subseteq\mathcal W_{n,d+1}.
\]
The map $Z\mapsto B\cup Z$ identifies $\mathcal W_{m,d+1}$ with $\mathcal W(B,Y)$, and identifies $\mathcal H_{m,d}$ with the subhypergraph of $\mathcal H_{n,d}$ induced by $\mathcal W(B,Y)$. Let $\mathfrak S_{n,m}$ be the set of all centred $m$-sections, and let
\[
\rho_{m,d+1}=\min_{0\le i\le d}\frac{\binom{m}{k_{m,d+1}+i}}{M_m}.
\]
Note that $\rho_{m,d+1}\le1$ and $\rho_{m,d+1}\to1$ as $m\to\infty$ for fixed $d$. Clearly, if $\mathcal F\subseteq\mathcal W_{n,d+1}$ is $Q_d$-free, then so is $\mathcal F\cap\mathcal W(B,Y)$ for every section $(B,Y)$.

\begin{lemma}\label{lem:3.1}
Let $\mathcal F\subseteq\mathcal W_{n,d+1}$ and let $(B,Y)$ be a uniformly random member of $\mathfrak S_{n,m}$. Then
\[
\mathbb E|\mathcal F\cap\mathcal W(B,Y)|
=\sum_{i=0}^{d}\left|\mathcal F\cap\binom{[n]}{k_{n,d+1}+i}\right|\frac{\binom{m}{k_{m,d+1}+i}}{\binom{n}{k_{n,d+1}+i}}
\ge\rho_{m,d+1}\frac{M_m}{M_n}|\mathcal F|.
\]
\end{lemma}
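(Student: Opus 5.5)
The plan is to prove the identity by linearity of expectation together with a symmetry argument, and then to obtain the inequality termwise. Write
\[
\mathbb E|\mathcal F\cap\mathcal W(B,Y)|=\sum_{F\in\mathcal F}\Pr\bigl(F\in\mathcal W(B,Y)\bigr).
\]
The symmetric group on $[n]$ acts transitively on each level $\binom{[n]}{k_{n,d+1}+i}$ and preserves $\mathfrak S_{n,m}$ and the uniform measure on it. Hence, for a fixed level $i$, the probability $q_i=\Pr(F\in\mathcal W(B,Y))$ is the same for every $F$ with $|F|=k_{n,d+1}+i$.

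To compute $q_i$, double count. A set $B\cup Z$ in $\mathcal W(B,Y)$ with $|Z|=k_{m,d+1}+i$ has size $k_{n,d+1}-k_{m,d+1}+k_{m,d+1}+i=k_{n,d+1}+i$. So $\mathcal W(B,Y)$ meets level $k_{n,d+1}+i$ in exactly $\binom{m}{k_{m,d+1}+i}$ sets, and distinct $Z$ give distinct sets. It contains no sets of size outside the window. Averaging over $(B,Y)$, the expected number of level-$(k_{n,d+1}+i)$ sets of $\mathcal W(B,Y)$ is $\binom{m}{k_{m,d+1}+i}$. By symmetry this expectation also equals $q_i\binom{n}{k_{n,d+1}+i}$. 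Therefore
\[
q_i=\binom{m}{k_{m,d+1}+i}\Big/\binom{n}{k_{n,d+1}+i}.
\]
Summing over the levels gives the stated equality.

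For the inequality, use the definition of $\rho_{m,d+1}$, which gives
\[
\binom{m}{k_{m,d+1}+i}\ge\rho_{m,d+1}M_m.
\]
Also $\binom{n}{k_{n,d+1}+i}\le M_n$, since $M_n$ is the largest binomial coefficient. Together these give $q_i\ge\rho_{m,d+1}M_m/M_n$ for each $i$. Summing $|\mathcal F\cap\binom{[n]}{k_{n,d+1}+i}|$ over $i$ gives $|\mathcal F|$, because $\mathcal F\subseteq\mathcal W_{n,d+1}$.

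There is no real obstacle. The only point needing care is checking that the map $Z\mapsto B\cup Z$ lands in the correct level and is injective, so that the counts above are exact. This holds since $B\cap Y=\emptyset$ and $|B|=k_{n,d+1}-k_{m,d+1}$.
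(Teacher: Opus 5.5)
Your proof is correct and follows essentially the same route as the paper: linearity of expectation, symmetry under permutations of $[n]$, and the fact that every section contains exactly $\binom{m}{k_{m,d+1}+i}$ sets of level $k_{n,d+1}+i$ together give the covering probability. The inequality then follows termwise from $\binom{m}{k_{m,d+1}+i}\ge\rho_{m,d+1}M_m$ and $\binom{n}{k_{n,d+1}+i}\le M_n$.
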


\begin{proof}
Every section contains exactly $\binom{m}{k_{m,d+1}+i}$ sets of level $k_{n,d+1}+i$. By symmetry, each set of this level lies in the same number of sections, so a fixed set of level $k_{n,d+1}+i$ lies in a random section with probability $\binom{m}{k_{m,d+1}+i}/\binom{n}{k_{n,d+1}+i}$. Summing the indicators over $\mathcal F$ gives the equality. The inequality follows from $\binom{m}{k_{m,d+1}+i}\ge\rho_{m,d+1}M_m$ and $\binom{n}{k_{n,d+1}+i}\le M_n$.
\end{proof}

\begin{proposition}\label{prop:3.2}
For every fixed $d\ge1$, the sequence $\operatorname{ex}_d(n)/M_n$ converges.
\end{proposition}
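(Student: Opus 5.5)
The plan is a standard averaging (Katona-type) argument based on Lemma~\ref{lem:3.1}: show that $\operatorname{ex}_d(n)/M_n$ is, up to a factor tending to $1$, non-increasing. Set $a_n=\operatorname{ex}_d(n)/M_n$. By Lemma~\ref{lem:2.2} the whole window has $(d+1+o(1))M_n$ members, so $0\le a_n\le d+2$ for large $n$ and the sequence is bounded. The key inequality to establish is
\[
a_n\le \frac{a_m}{\rho_{m,d+1}}\qquad\text{for all } n\ge m\ge d .
\]

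To prove it, let $\mathcal F\subseteq\mathcal W_{n,d+1}$ be a $Q_d$-free family with $|\mathcal F|=\operatorname{ex}_d(n)$, and let $(B,Y)$ be a uniformly random centred $m$-section. As noted before Lemma~\ref{lem:3.1}, $\mathcal F\cap\mathcal W(B,Y)$ is $Q_d$-free. The map $Z\mapsto B\cup Z$ identifies $\mathcal W(B,Y)$ with $\mathcal W_{m,d+1}$ and preserves inclusion, so it maps $Q_d$-free families to $Q_d$-free families. Hence $|\mathcal F\cap\mathcal W(B,Y)|\le\operatorname{ex}_d(m)$ for every section. Taking expectations and applying Lemma~\ref{lem:3.1} gives
\[
\rho_{m,d+1}\frac{M_m}{M_n}\operatorname{ex}_d(n)\le\operatorname{ex}_d(m),
\]
which is exactly the displayed inequality.

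Now take $\limsup$ in $n$ with $m$ fixed: $\limsup_n a_n\le a_m/\rho_{m,d+1}$. Then take $\liminf$ over $m$. Since $\rho_{m,d+1}\to1$ for fixed $d$, this yields $\limsup_n a_n\le\liminf_m a_m$. Because $(a_n)$ is bounded, the limit exists and is finite.

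No step is a serious obstacle. The one point needing care is that the section family genuinely reproduces the $m$-window, including the centring: a $Q_d$-free family restricted to $\mathcal W(B,Y)$ must become a $Q_d$-free subfamily of $\mathcal W_{m,d+1}$ itself, not of some shifted window. This holds by the choice $|B|=k_{n,d+1}-k_{m,d+1}$, which sends the levels $k_{m,d+1}+i$ of $Y$ to the levels $k_{n,d+1}+i$ of $[n]$. The $\rho_{m,d+1}$ factor, which absorbs the mismatch between level sizes and $M_m$, is already handled in Lemma~\ref{lem:3.1}.
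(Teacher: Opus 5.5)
Your proposal is correct and follows the paper's proof essentially verbatim. Both derive $a_n\le a_m/\rho_{m,d+1}$ by averaging over centred $m$-sections via Lemma~\ref{lem:3.1}, take $\limsup$ in $n$, and then pass to $\liminf$ in $m$ using $\rho_{m,d+1}\to1$; the paper does this last step along a subsequence $m_j$ with $a_{m_j}\to\liminf_m a_m$, which amounts to the same thing.
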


\begin{proof}
Let $a_n=\operatorname{ex}_d(n)/M_n$. Fix $n\ge m\ge d$ and let $\mathcal F\subseteq\mathcal W_{n,d+1}$ be a $Q_d$-free family of size $\operatorname{ex}_d(n)$. The intersection of $\mathcal F$ with every centred $m$-section is $Q_d$-free, so Lemma~\ref{lem:3.1} gives
\[
\rho_{m,d+1}\frac{M_m}{M_n}\operatorname{ex}_d(n)\le\mathbb E|\mathcal F\cap\mathcal W(B,Y)|\le\operatorname{ex}_d(m),
\]
that is, $a_n\le a_m/\rho_{m,d+1}$. Let $L=\liminf_{m\to\infty}a_m$ and choose $m_j\to\infty$ with $a_{m_j}\to L$. Taking the limit superior over $n$ and then letting $j\to\infty$, we obtain
\[
\limsup_{n\to\infty}a_n\le\lim_{j\to\infty}\frac{a_{m_j}}{\rho_{m_j,d+1}}=L.
\]
Hence $\limsup a_n\le\liminf a_n$ and the sequence converges.
\end{proof}

From now on, $\pi_d=\lim_{n\to\infty}\operatorname{ex}_d(n)/M_n$. The union of any $d$ of the $d+1$ levels of $\mathcal W_{n,d+1}$ is $Q_d$-free, so $\pi_d\ge d$. In fact $\pi_d>d$ for every $d\ge2$. For $d\ge4$, Ellis, Ivan and Leader~\cite{Ellis2024Daisies} kept $d$ of the $d+1$ levels and replaced the remaining middle level by a family of positive density that contains no copy of the middle level of $Q_d$, that is, no generalised daisy. For $d=2$ this is the construction of Tompkins~\cite{Tompkins2026Improved}, which gives $\pi_2\ge2.1479$, and for $d=3$ Patk\'os~\cite{Patkos2026B3} obtained $\pi_3\ge3.22$ by adapting it. His four layers agree with our window when $n$ is odd; taking complements when $n$ is even gives the stated bound in our convention.
On the other hand,
\[
\pi_d\le d+1-\binom{d}{\lfloor d/2\rfloor}^{-1}<d+1.
\]
Indeed, let $\mathcal F\subseteq\mathcal W_{n,d+1}$ be $Q_d$-free and let $\mathcal G=\mathcal W_{n,d+1}\setminus\mathcal F$. Then $\mathcal G$ meets every edge of $\mathcal H_{n,d}$. A vertex of level $k_{n,d+1}+i$ lies in
\[
\binom{k_{n,d+1}+i}{i}\binom{n-k_{n,d+1}-i}{d-i}=\left(\frac{\binom di}{2^dd!}+o(1)\right)n^d
\]
edges, so $e(\mathcal H_{n,d})\le\Delta_1(\mathcal H_{n,d})|\mathcal G|$, where $\Delta_1$ is the maximum degree, and Lemma~\ref{lem:2.2} gives $|\mathcal G|\ge(\binom{d}{\lfloor d/2\rfloor}^{-1}+o(1))M_n$. In particular, $\pi_1=1$.

Let $t_d(n)$ be the maximum size of a subset of the discrete cube containing no $d$-dimensional subcube, and define
\[
\lambda_d=\lim_{n\to\infty}\frac{t_d(n)}{2^n}.
\]
This limit exists: averaging over codimension-one faces shows that $t_d(n)/2^n$ is nonincreasing for $n\ge d$. It is the vertex Tur\'an density for subcubes, rather than the extremal density for arbitrary weak poset copies in the full Boolean lattice.

\begin{proposition}\label{prop:cube-turan}
For every $d\ge1$,
\[
(d+1)\lambda_d\le\pi_d
\le d+1-\binom d{\lfloor d/2\rfloor}^{-1}.
\]
There is an absolute constant $c\in(0,1)$ such that
\[
\binom d{\lfloor d/2\rfloor}^{-1}
\le d+1-\pi_d\le(d+1)c^d.
\]
In particular, $d+1-\pi_d=\exp(-\Theta(d))$ as $d\to\infty$.
\end{proposition}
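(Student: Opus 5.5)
The upper bound $\pi_d\le d+1-\binom d{\lfloor d/2\rfloor}^{-1}$ was proved in the discussion preceding the statement by the degree-counting argument, and it is equivalent to the lower bound $d+1-\pi_d\ge\binom d{\lfloor d/2\rfloor}^{-1}$. Two things remain. The first is the lower bound $(d+1)\lambda_d\le\pi_d$. The second is the exponential upper bound on $d+1-\pi_d$, which I would obtain by combining the first with a known construction for the full cube.

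For $(d+1)\lambda_d\le\pi_d$ I would use a random translation. Fix $\epsilon>0$ and $N\ge d$ with $t_d(N)\ge(\lambda_d-\epsilon)2^N$, and let $S\subseteq\{0,1\}^N$ be a subcube-free set of that size. For large $n$, choose a uniformly random map $f:[n]\to[N]$ (a random colouring of the ground set) and a uniformly random shift $v\in\{0,1\}^N$. Send each $F\subseteq[n]$ to the vector $\sigma(F)\in\{0,1\}^N$ whose $j$th coordinate is $|F\cap f^{-1}(j)|\bmod 2$, plus $v$. Define $\mathcal F=\{F\in\mathcal W_{n,d+1}:\sigma(F)\in S\}$.

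\emph{$\mathcal F$ is $Q_d$-free with high probability.} By Lemma~\ref{lem:2.1}, a copy of $Q_d$ in $\mathcal F$ is a canonical subcube $\mathcal Q(A,X)$. If $f$ is injective on $X$, then $\sigma$ maps $\mathcal Q(A,X)$ onto the $d$-dimensional subcube of $\{0,1\}^N$ through $\sigma(A)$ in the directions $f(X)$, and this subcube is not contained in $S$. So the only possible copies are those with $f$ not injective on $X$. Instead of hoping no such copy occurs, I would just delete them: the expected number of bad edges is at most $\binom d2 N^{-1}e(\mathcal H_{n,d})$, which is of order $n^dM_n/N$, far too many to delete one vertex each. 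So the deletion needs a different handling. The cleaner fix is to replace the random colouring by a random injection of coordinates: choose $N=N(n)$ distinct coordinates at random and let $\sigma$ record only those coordinates. But then sets differing outside the chosen coordinates collide, which breaks the argument.

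The right device, I think, is a random linear map. Take $f:[n]\to\{0,1\}^N$ uniformly and set $\sigma(F)=\sum_{x\in F}f(x)+v$. Then $\mathcal Q(A,X)$ maps onto a $d$-dimensional subcube exactly when $f(X)$ consists of $d$ distinct standard basis vectors. So I would restrict $f$ to take values in the standard basis $\{e_1,\dots,e_N\}$, which is the colouring above, and accept non-injective $X$. For such $X$, with $x\ne y$ in $X$ and $f(x)=f(y)$, the two sets $A\cup\{x\}$ and $A\cup\{y\}$ have the same image, so the image of the cube is a lower-dimensional cube with repeated points. To handle this, first average over sections: by Lemma~\ref{lem:3.1}, it suffices to build good families inside $\mathcal W_{m,d+1}$ for a fixed $m$. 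Then take $N\ge m$ and use an injective $f$, so every subcube of $\mathcal W_{m,d+1}$ maps to a genuine subcube, and $\mathcal F$ is exactly $Q_d$-free.

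\emph{Density.} For injective $f$ with $N=m$, the map $F\mapsto\sigma(F)$ is the bijection $F\mapsto 1_F+v$. Each fixed $F$ is therefore retained with probability $|S|/2^m$, since $v$ is uniform. This gives $\mathbb E|\mathcal F|=(|S|/2^m)|\mathcal W_{m,d+1}|\ge(\lambda_d-\epsilon)(d+1+o(1))M_m$, using $t_d(m)/2^m\ge\lambda_d$ (monotonicity). Some shift $v$ achieves at least the expectation, and the translated set $S+v$ is still subcube-free. Hence $\operatorname{ex}_d(m)\ge(\lambda_d+o(1))(d+1)M_m$, and letting $m\to\infty$ gives $\pi_d\ge(d+1)\lambda_d$. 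So in the end no coordinate compression is needed; the whole argument is a random translation of a subcube-free set inside $\{0,1\}^m$ itself.

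For the exponential upper bound on $d+1-\pi_d$, I would invoke Ellis, Ivan and Leader~\cite{Ellis2024Daisies,Ellis2025Small}, who show that $1-\lambda_d\le c^d$ for some absolute $c<1$. Then $d+1-\pi_d\le(d+1)(1-\lambda_d)\le(d+1)c^d$. Combined with $\binom d{\lfloor d/2\rfloor}^{-1}=\exp(-O(d))$ this gives $\exp(-\Theta(d))$; the factor $d+1$ can be absorbed by enlarging $c$.

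The main obstacle is confirming that the cited result indeed gives an exponentially small deficit in $1-\lambda_d$, and extracting it in that form from the literature.
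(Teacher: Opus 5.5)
Your final argument is correct and is the paper's proof. You translate a maximum subcube-free set $S\subseteq\{0,1\}^m$ by a uniformly random vector and use Lemma~\ref{lem:2.1} to see that its trace on $\mathcal W_{m,d+1}$ is $Q_d$-free. Averaging gives $\operatorname{ex}_d(m)\ge(t_d(m)/2^m)|\mathcal W_{m,d+1}|$, you let $m\to\infty$, and you cite Ellis, Ivan and Leader for $1-\lambda_d\le c^d$. The earlier detours through random colourings, random linear maps and section averaging via Lemma~\ref{lem:3.1} are unnecessary, as you eventually noticed: $\pi_d$ is already defined as the limit of $\operatorname{ex}_d(m)/M_m$, and that inequality holds directly for every $m$.
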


\begin{proof}
Choose a subcube-free family $\mathcal S\subseteq2^{[n]}$ of size $t_d(n)$. For a uniformly random $R\subseteq[n]$, put
$\mathcal S\triangle R=\{A\triangle R:A\in\mathcal S\}$.
Symmetric difference with $R$ is an automorphism of the discrete cube, so $\mathcal S\triangle R$ is still subcube-free. By Lemma~\ref{lem:2.1}, its intersection with $\mathcal W_{n,d+1}$ is weakly $Q_d$-free. Every fixed subset of $[n]$ belongs to $\mathcal S\triangle R$ with probability $t_d(n)/2^n$. Consequently,
\[
\operatorname{ex}_d(n)\ge
\mathbb E|(\mathcal S\triangle R)\cap\mathcal W_{n,d+1}|
=\frac{t_d(n)}{2^n}|\mathcal W_{n,d+1}|.
\]
Letting $n\to\infty$ proves the lower bound on $\pi_d$; its upper bound was proved above. Ellis, Ivan and Leader~\cite[Theorem~7]{Ellis2024Daisies} prove $1-\lambda_d\le c^d$ for an absolute $c<1$. Combining the two inequalities gives the deficit bounds. Finally,
$\binom d{\lfloor d/2\rfloor}=\Theta(2^d/\sqrt d)$,
so these bounds imply the claimed exponential order.
\end{proof}

Here $n\to\infty$ is taken for each fixed $d$ before considering $d\to\infty$. The proposition determines the exponential order of the deficit, not its exponential rate: the displayed lower and upper bounds are not asserted to differ by only a polynomial factor in $d$. For $d=2$, the transfer inequality gives only $\pi_2\ge2$, and the sharper construction of Tompkins is needed.

\section{Supersaturation and codegrees}\label{sec4}

For $\mathcal F\subseteq\mathcal W_{n,d+1}$, let $N_d(\mathcal F)=e(\mathcal H_{n,d}[\mathcal F])$ be the number of canonical subcubes contained in $\mathcal F$.

\begin{proposition}\label{prop:4.1}
For every fixed $d\ge1$ and every $\epsilon>0$ there is $c=c(d,\epsilon)>0$ such that for all sufficiently large $n$, every $\mathcal F\subseteq\mathcal W_{n,d+1}$ with $|\mathcal F|\ge(\pi_d+\epsilon)M_n$ satisfies $N_d(\mathcal F)\ge cn^dM_n$.
\end{proposition}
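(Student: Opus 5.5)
We average over centred $m$-sections for a suitably large fixed $m$, exactly as in the proof of Proposition~\ref{prop:3.2}. Given $\epsilon>0$, we choose $m=m(d,\epsilon)\ge d$ so large that $\operatorname{ex}_d(m)\le(\pi_d+\epsilon/4)M_m$ (possible since $\operatorname{ex}_d(m)/M_m\to\pi_d$) and $\rho_{m,d+1}(\pi_d+\epsilon)\ge\pi_d+\epsilon/2$. Let $\mathcal F\subseteq\mathcal W_{n,d+1}$ with $|\mathcal F|\ge(\pi_d+\epsilon)M_n$ and let $(B,Y)$ be uniform in $\mathfrak S_{n,m}$. By Lemma~\ref{lem:3.1},
\[
\mathbb E|\mathcal F\cap\mathcal W(B,Y)|\ge\rho_{m,d+1}\frac{M_m}{M_n}|\mathcal F|\ge(\pi_d+\epsilon/2)M_m .
\]
Since $|\mathcal F\cap\mathcal W(B,Y)|\le|\mathcal W_{m,d+1}|\le(d+1)M_m$, a reverse Markov argument shows that with probability at least $\delta=\epsilon/(4(d+1))$ the section satisfies $|\mathcal F\cap\mathcal W(B,Y)|>(\pi_d+\epsilon/4)M_m\ge\operatorname{ex}_d(m)$. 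Each such section therefore contains at least one canonical subcube of $\mathcal H_{m,d}$ inside $\mathcal F$, and via the identification $Z\mapsto B\cup Z$ this is a canonical subcube of $\mathcal H_{n,d}$ contained in $\mathcal F$.

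Next we double count pairs (section, subcube in $\mathcal F\cap\mathcal W(B,Y)$). Their number is at least $\delta|\mathfrak S_{n,m}|$. We also need the number of sections containing a fixed canonical subcube $\mathcal Q(A,X)$. A section contains it exactly when $B\subseteq A$, $X\subseteq Y$, and $A\setminus B\subseteq Y$; note that $|A\setminus B|=k_{m,d+1}$ is forced. By symmetry this number is the same, $s$, for every subcube, so
\[
N_d(\mathcal F)\ge\frac{\delta|\mathfrak S_{n,m}|}{s}=\delta\,\frac{e(\mathcal H_{n,d})}{e(\mathcal H_{m,d})},
\]
where the identity $|\mathfrak S_{n,m}|e(\mathcal H_{m,d})=s\,e(\mathcal H_{n,d})$ comes from double counting all (section, subcube) pairs. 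By Lemma~\ref{lem:2.2}, $e(\mathcal H_{n,d})=\Theta_d(n^dM_n)$, and $e(\mathcal H_{m,d})$ is a constant depending only on $m,d$. This gives $N_d(\mathcal F)\ge c\,n^dM_n$ with $c=\delta/(2^{d+1}d!\,e(\mathcal H_{m,d}))$ for large $n$.

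The only delicate point is the symmetry claim that every canonical subcube lies in the same number of sections, and that every subcube of $\mathcal H_{m,d}$ inside a section corresponds to a canonical subcube of $\mathcal H_{n,d}$. The latter is the identification stated before Lemma~\ref{lem:3.1}. The former follows because the symmetric group on $[n]$ acts transitively on canonical subcubes and preserves $\mathfrak S_{n,m}$. We do not use the precise value of $s$; we only need the double-counting identity. We also need the constants to be fixed in the right order: $m$ depends only on $d,\epsilon$, and $n\ge m$ is then large.
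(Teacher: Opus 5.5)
Your proof is correct and follows essentially the same route as the paper: average over random centred $m$-sections with $m=m(d,\epsilon)$ fixed, use Lemma~\ref{lem:3.1} and a reverse Markov bound to get a proportion at least $\epsilon/(4(d+1))$ of sections with more than $\operatorname{ex}_d(m)$ members of $\mathcal F$, and then double count (section, subcube) pairs via $|\mathfrak S_{n,m}|\,e(\mathcal H_{m,d})=s\,e(\mathcal H_{n,d})$. The only differences are in the bookkeeping of constants: you use $\epsilon/4$ where the paper uses $\epsilon/8$, and you impose $\rho_{m,d+1}(\pi_d+\epsilon)\ge\pi_d+\epsilon/2$ directly where the paper goes through $\pi_d\le d+1$.
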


\begin{proof}
By Proposition~\ref{prop:3.2} and since $\rho_{m,d+1}\to1$, we may choose $m=m(d,\epsilon)$ so large that
\[
\operatorname{ex}_d(m)\le\left(\pi_d+\frac{\epsilon}{8}\right)M_m
\qquad\text{and}\qquad
(1-\rho_{m,d+1})(d+1+\epsilon)\le\frac{\epsilon}{8}.
\]
Let $(B,Y)$ be a uniformly random centred $m$-section and let $X_{B,Y}=|\mathcal F\cap\mathcal W(B,Y)|$. Since $\pi_d\le d+1$, Lemma~\ref{lem:3.1} and the choice of $m$ give
\[
\mathbb EX_{B,Y}\ge\rho_{m,d+1}(\pi_d+\epsilon)M_m\ge\left(\pi_d+\frac{7\epsilon}{8}\right)M_m.
\]
Call a section rich if $X_{B,Y}>\operatorname{ex}_d(m)$, and let $q$ be the proportion of rich sections. Every rich section contains a canonical subcube lying in $\mathcal F$. Since every section has at most $(d+1)M_m$ vertices,
\[
\left(\pi_d+\frac{7\epsilon}{8}\right)M_m\le\mathbb EX_{B,Y}\le\operatorname{ex}_d(m)+q(d+1)M_m\le\left(\pi_d+\frac{\epsilon}{8}\right)M_m+q(d+1)M_m,
\]
and hence $q\ge3\epsilon/(4(d+1))=:\beta$.

By symmetry, every edge of $\mathcal H_{n,d}$ lies in the same number, say $s_{n,m}$, of centred $m$-sections. Counting pairs consisting of a section and an edge contained in it, we obtain
\[
|\mathfrak S_{n,m}|\,e(\mathcal H_{m,d})=s_{n,m}\,e(\mathcal H_{n,d}).
\]
On the other hand, every rich section contains at least one edge of $\mathcal H_{n,d}[\mathcal F]$, and every such edge lies in exactly $s_{n,m}$ sections, so $s_{n,m}N_d(\mathcal F)\ge\beta|\mathfrak S_{n,m}|$. Combining the last two displays, we get
\[
N_d(\mathcal F)\ge\beta\frac{e(\mathcal H_{n,d})}{e(\mathcal H_{m,d})}.
\]
Here $e(\mathcal H_{m,d})$ depends only on $d$ and $\epsilon$, and $e(\mathcal H_{n,d})=\Theta_d(n^dM_n)$ by Lemma~\ref{lem:2.2}. This completes the proof.
\end{proof}

Since $e(\mathcal H_{n,d})=\Theta_d(n^dM_n)$, Proposition~\ref{prop:4.1} says that a family of size at least $(\pi_d+\epsilon)M_n$ contains a positive proportion, depending on $d$ and $\epsilon$, of all edges of $\mathcal H_{n,d}$. Propositions~\ref{prop:3.2} and~\ref{prop:4.1} together prove Theorem~\ref{thm:1.1}.

We next compute the codegrees of $\mathcal H_{n,d}$, which are needed for the container method. For an $r$-uniform hypergraph $\mathcal H$ and $1\le j\le r$, let $\Delta_j(\mathcal H)$ be the maximum number of edges of $\mathcal H$ containing a fixed set of $j$ vertices.

\begin{lemma}\label{lem:5.1}
Let $d\ge1$ and $r=2^d$. For every $2\le j\le r$,
\[
\Delta_j(\mathcal H_{n,d})=O_d\left(n^{d-\lceil\log_2j\rceil}\right).
\]
In particular, $\Delta_r(\mathcal H_{n,d})=1$.
\end{lemma}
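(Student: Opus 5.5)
Fix a set $\mathcal J$ of $j\ge2$ distinct vertices of $\mathcal W_{n,d+1}$ and count canonical subcubes $\mathcal Q(A,X)$ containing all of them. I would show that $\bigcup\mathcal J\setminus\bigcap\mathcal J$ has at least $\lceil\log_2 j\rceil$ elements, and that once this set is fixed only $O_d(n^{d-\lceil\log_2 j\rceil})$ subcubes remain.

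\emph{Structural step.} Suppose $\mathcal Q(A,X)\supseteq\mathcal J$, and write every $S\in\mathcal J$ as $S=A\cup Y_S$ with $Y_S\subseteq X$. Put $U=\bigcup\mathcal J$, $I=\bigcap\mathcal J$ and $D=U\setminus I$. Then $A\subseteq I$ and $D\subseteq X$. Every member of $\mathcal J$ is $I$ together with a subset of $D$. Since the $j$ members are distinct, $2^{|D|}\ge j$, so $t:=|D|\ge\lceil\log_2 j\rceil$. Moreover $t\ge1$ because $j\ge2$. All of $U,I,D$ depend only on $\mathcal J$, not on the subcube.

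\emph{Counting step.} The subcube is determined by $A$ and $X$. We have $X\supseteq D$ with $|X|=d$, so $X$ is obtained by adding $d-t$ further elements outside $D$. This gives at most $\binom{n}{d-t}=O_d(n^{d-t})$ choices. Given $X$, the set $A$ is determined, since $A=S\setminus X$ for any $S\in\mathcal J$. Indeed $S=A\cup Y_S$ with $Y_S\subseteq X$ and $A\cap X=\emptyset$. Hence the number of edges containing $\mathcal J$ is at most $\binom{n}{d-t}\le O_d(n^{d-\lceil\log_2 j\rceil})$, using $t\ge\lceil\log_2 j\rceil$. If $t>d$, there are no such edges, and the bound holds trivially.

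\emph{Case $j=r$.} When $j=r=2^d$, the fixed set must be an entire edge, because edges have exactly $r$ vertices. So $\Delta_r\le1$, and equality holds by taking $\mathcal J$ to be any edge. Equivalently, $t=d$ forces $X=D$, after which $A$ is determined.

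The only step needing care is the observation that $A$ is determined by $X$ and $\mathcal J$, rather than ranging over $\Theta(M_n)$ choices. It follows from disjointness of $A$ and $X$, so I do not expect a real obstacle. The bound is uniform over all choices of $\mathcal J$, which is what the maximum in $\Delta_j$ requires.
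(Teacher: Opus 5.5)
Your proposal is correct and is essentially the paper's argument. Both proofs show $|U\setminus I|\ge\lceil\log_2 j\rceil$ and then count subcubes by the $d-t$ coordinates of $X$ outside $U\setminus I$, with $A$ forced once $X$ is known. The paper splits these coordinates into $P\subseteq I$ and $Q\subseteq[n]\setminus U$, of determined sizes, but this gives the same count, and its treatment of $j=r$ matches yours.
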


\begin{proof}
Let $F_1,\dots,F_j$ be distinct vertices of $\mathcal H_{n,d}$ contained in a common canonical subcube, and let
\[
I=\bigcap_{a=1}^{j}F_a,\qquad U=\bigcup_{a=1}^{j}F_a,\qquad t=|U\setminus I|.
\]
Inside a canonical subcube containing all $F_a$, the sets $F_1,\dots,F_j$ agree outside $U\setminus I$, so they are determined by their intersections with $U\setminus I$. Thus $j\le2^t$, that is, $t\ge\lceil\log_2j\rceil$.

Now let $\mathcal Q(A,X)$ be a canonical subcube containing all $F_a$. Then $A\subseteq I$, $U\subseteq A\cup X$, $|A|=k_{n,d+1}$ and $|X|=d$. Let $P=I\setminus A$ and $Q=(A\cup X)\setminus U$; see Figure~\ref{fig:blocks}.

\begin{figure}[htbp]
\centering
\begin{tikzpicture}[x=1.2cm, y=1cm]
\draw (0,0) rectangle (4,2);
\draw (1,0) -- (1,2);
\draw (2,0) -- (2,2);
\draw (3,0) -- (3,2);
\node at (0.5,1) {$A$};
\node at (1.5,1) {$P$};
\node at (2.5,1) {$U \setminus I$};
\node at (3.5,1) {$Q$};
\draw[<->] (0,-0.25) -- (2,-0.25) node[midway,below] {$I$};
\draw[<->] (0,-0.75) -- (3,-0.75) node[midway,below] {$U$};
\draw[<->] (1,-1.25) -- (4,-1.25) node[midway,below] {$X$};
\draw[<->] (2,2.25) -- (3,2.25) node[midway,above] {$t$};
\draw[<->] (1,2.75) -- (4,2.75) node[midway,above] {$d$};
\end{tikzpicture}
\caption{The four blocks $A$, $P$, $U\setminus I$ and $Q$.}
\label{fig:blocks}
\end{figure}

Since $A\subseteq I\subseteq U\subseteq A\cup X$, we have $X=(U\setminus I)\cup P\cup Q$ with the three parts disjoint, so $|P|+|Q|=d-t$. Moreover, $|P|=|I|-k_{n,d+1}$ is determined by $F_1,\dots,F_j$, and hence so is $|Q|$. The subcube $\mathcal Q(A,X)$ is determined by $P\subseteq I$ and $Q\subseteq[n]\setminus U$, so the number of canonical subcubes containing all $F_a$ is at most
\[
\binom{|I|}{|P|}\binom{n-|U|}{|Q|}\le n^{|P|+|Q|}=n^{d-t}.
\]
Since $t\ge\lceil\log_2j\rceil$, the bound follows. If $j=2^d$, then $t=d$, $P=Q=\emptyset$ and the subcube is unique.
\end{proof}

The next inequality explains the exponent in Theorem~\ref{thm:1.3}.

\begin{lemma}\label{lem:5.2}
Let $r=2^d$ and $\gamma_d=d/(2^d-1)$. For every $2\le j\le r$,
\[
\lceil\log_2j\rceil\ge\gamma_d(j-1),
\]
with equality only if $j=r$.
\end{lemma}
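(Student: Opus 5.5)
We need to show that $\lceil\log_2 j\rceil\ge \frac{d(j-1)}{2^d-1}$ for $2\le j\le 2^d$, with equality only at $j=2^d$. The plan is to group the values of $j$ by their ceiling logarithm and then use a convexity-type comparison between $s/(2^s-1)$ and $d/(2^d-1)$.

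First we reduce to the top of each dyadic block. Put $s=\lceil\log_2 j\rceil$, so that $1\le s\le d$ and $2^{s-1}<j\le 2^s$. The right-hand side $\gamma_d(j-1)$ is strictly increasing in $j$, while the left-hand side is constant on this block. Hence it suffices to prove
\[
s\ge \gamma_d(2^s-1)\qquad\text{for }1\le s\le d .
\]
Moreover, whenever $j<2^s$ the inequality is strict, since $\gamma_d(j-1)<\gamma_d(2^s-1)$. So equality can occur only when $j=2^s$ and $s\ge \gamma_d(2^s-1)$ holds with equality.

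Next we prove that $f(s)=s/(2^s-1)$ is strictly decreasing on the positive integers. We have $f(s)\ge f(s+1)$ if and only if $s(2^{s+1}-1)\ge (s+1)(2^s-1)$. Expanding, this is equivalent to $s2^{s+1}-s\ge s2^s+2^s-s-1$, that is, to $(s-1)2^s+1\ge 0$, and this last inequality is strict for every $s\ge1$. Therefore $f(s)>f(d)=\gamma_d$ for all $s<d$, which gives $s>\gamma_d(2^s-1)$ strictly. When $s=d$ we have equality, $\gamma_d(2^d-1)=d$.

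Combining the two steps, equality in the original inequality forces $j=2^s$ with $s=d$, that is, $j=r$. The argument is elementary throughout; the only point needing care is the monotonicity of $f$, which is the routine computation carried out above.
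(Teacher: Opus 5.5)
Your proposal is correct and follows essentially the same route as the paper: set $s=\lceil\log_2 j\rceil$, use $j-1\le 2^s-1$, and apply the strict decrease of $s/(2^s-1)$ to obtain $\gamma_d(j-1)\le\gamma_d(2^s-1)\le s$, with equality forcing $s=d$ and $j=2^d$. You also verify the monotonicity of $s/(2^s-1)$ explicitly via $(s-1)2^s+1>0$, which the paper only asserts.
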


\begin{proof}
Let $t=\lceil\log_2j\rceil$, so that $1\le t\le d$ and $j-1\le2^t-1$. The sequence $t/(2^t-1)$ is strictly decreasing in $t$, so $\gamma_d=d/(2^d-1)\le t/(2^t-1)$. Hence
\[
\gamma_d(j-1)\le\gamma_d(2^t-1)\le t.
\]
If equality holds throughout, then $t=d$ and $j=2^t=r$.
\end{proof}

Thus, if $\mathcal F\subseteq\mathcal W_{n,d+1}$ has $|\mathcal F|\ge(\pi_d+\epsilon)M_n$, then $e(\mathcal H_{n,d}[\mathcal F])=\Omega_{d,\epsilon}(n^dM_n)$ by Proposition~\ref{prop:4.1}, while $\Delta_j(\mathcal H_{n,d}[\mathcal F])=O_d(n^{d-\lceil\log_2j\rceil})$ by Lemma~\ref{lem:5.1}. This is exactly the situation in which the hypergraph container method applies.

\section{Containers and enumeration}\label{sec6}

We use the container theorem of Saxton and Thomason~\cite{Saxton2012HypergraphC} in the form with fingerprints. We first recall their codegree function. Let $\mathcal H$ be an $r$-uniform hypergraph with $v$ vertices and average degree $D>0$. For $x\in V(\mathcal H)$ and $2\le j\le r$, let
\[
d^{(j)}(x)=\max\{d_{\mathcal H}(\sigma):x\in\sigma\subseteq V(\mathcal H),\ |\sigma|=j\},
\qquad
\delta_j=\frac{\sum_xd^{(j)}(x)}{\tau^{j-1}vD},
\]
where $d_{\mathcal H}(\sigma)$ is the number of edges containing $\sigma$, and let
\[
\delta(\mathcal H,\tau)=2^{\binom r2-1}\sum_{j=2}^{r}2^{-\binom{j-1}{2}}\delta_j.
\]
Note that $\delta_j\le\Delta_j(\mathcal H)/(\tau^{j-1}D)$. The following is the part of Corollary~3.6 of \cite{Saxton2012HypergraphC} concerning independent sets.

\begin{theorem}\label{container_theorem}
For every $r\ge2$ there is a constant $a_r>0$ such that the following holds. Let $\mathcal H$ be an $r$-uniform hypergraph with $v$ vertices and positive average degree, and let $0<\xi,\tau<1/2$ satisfy $\delta(\mathcal H,\tau)\le\xi/(12r!)$. Then there are an integer $b=b(r,\xi)$ and a function $\mathcal C:\mathcal P(V(\mathcal H))^b\to\mathcal P(V(\mathcal H))$ such that for every independent set $I$ of $\mathcal H$ there is an ordered tuple $\mathbf T=(T_1,\dots,T_b)$ with
\[
T_i\subseteq I,\qquad |T_i|\le a_r\tau v\quad(1\le i\le b),\qquad I\subseteq\mathcal C(\mathbf T),\qquad e(\mathcal H[\mathcal C(\mathbf T)])\le\xi e(\mathcal H).
\]
\end{theorem}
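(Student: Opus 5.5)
The statement is a specialisation of Corollary~3.6 of Saxton and Thomason~\cite{Saxton2012HypergraphC}. I would therefore derive it from that corollary by matching notation, and not re-run the container algorithm. The first step is to check that the quantities $d^{(j)}(x)$, $\delta_j$ and $\delta(\mathcal H,\tau)$ above are exactly their codegree function for an $r$-graph with average degree $D=re(\mathcal H)/v$, including the normalising factor $2^{\binom r2-1}$ and the weights $2^{-\binom{j-1}2}$. With this in place, their hypothesis $\delta(\mathcal H,\tau)\le\epsilon/12r!$ with $\epsilon,\tau<1/2$ is literally our hypothesis with $\epsilon=\xi$.

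Next I would read off the conclusion of their corollary for independent sets. It gives a constant $c=c(r)$ and, for every independent set $I$, a tuple $(T_1,\dots,T_s)$ of subsets of $I$ with the following properties:
\begin{itemize}
\item $s\le c\log(1/\xi)$ and $|T_i|\le c\tau v$;
\item the tuple determines a container $C=C(T_1,\dots,T_s)\supseteq I$;
\item $e(\mathcal H[C])\le\xi e(\mathcal H)$.
\end{itemize}
The corollary also describes $C$ as obtained by iterating the basic container algorithm. Each round removes a fraction of the edges, and the fingerprint of each round is a subset of the current independent set. This is why the number of rounds is bounded in terms of $r$ and $\xi$ only.

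To obtain the form stated here I would make three adjustments.
\begin{itemize}
\item Set $a_r=c(r)$.
\item Set $b=b(r,\xi)=\lceil c\log(1/\xi)\rceil$, and pad every fingerprint tuple with empty sets up to length exactly $b$. The padded sets trivially satisfy $T_i\subseteq I$ and the size bound.
\item Define $\mathcal C$ on all of $\mathcal P(V(\mathcal H))^b$. On padded fingerprint tuples, $\mathcal C$ equals the Saxton--Thomason container of the unpadded prefix. This is well defined because their algorithm reads the tuple sequentially, so the output depends only on the tuple and not on $I$. On tuples that never arise, set $\mathcal C(\mathbf T)=\emptyset$.
\end{itemize}

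The only step needing care is this well-definedness, together with checking that the padding does not interfere with the sequential decoding. Their iteration stops once the edge count is at most $\xi e(\mathcal H)$. So I would either encode the stopping time explicitly, or note that the decoder halts when it reaches an empty fingerprint. In the latter case I would verify that a genuine round never produces an empty fingerprint, or else make the rule hold vacuously, since an empty fingerprint in their construction yields no change.
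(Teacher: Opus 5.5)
Your proposal is correct and is essentially the paper's argument, which likewise just invokes Corollary~3.6 of Saxton and Thomason, appends empty sets so that every tuple has length $b=O_r(\log(1/\xi))$, and relies on the container being determined by the ordered tuple. For the padding step, use your first option in the following form: the stopping test $e(\mathcal H[C_i])\le\xi e(\mathcal H)$ depends only on the current container, hence on the prefix $(T_1,\dots,T_i)$, so the decoder detects the stopping time itself and no genuine tuple is a proper prefix of another. Halting at the first empty fingerprint would not work, because every fingerprint of $I=\emptyset$ is empty while its containers must still shrink; so neither ``genuine rounds never give empty fingerprints'' nor ``empty fingerprints change nothing'' holds.
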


In \cite{Saxton2012HypergraphC} the number of sets in the tuple is $O_r(\log(1/\xi))$, and we append empty sets so that every tuple has length $b$. We shall use that the container $\mathcal C(\mathbf T)$ is determined by the ordered tuple $\mathbf T$.

In this and the next section, $r=2^d$ and $\gamma=\gamma_d=d/(2^d-1)$.

\begin{proposition}\label{prop:Q_d containers}
For every $\eta>0$ there are an integer $b=b(d,\eta)$ and a constant $C=C(d,\eta)>0$ such that the following holds for all sufficiently large $n$. For every independent set $I$ of $\mathcal H_{n,d}$ there are an ordered tuple $\mathbf T=(T_1,\dots,T_b)$ and a set $R=R(\mathbf T)$ such that, with $T=\bigcup_{i=1}^{b}T_i$,
\begin{enumerate}
\item[\textup{(i)}] $T\subseteq I\subseteq T\cup R$ and $T\cap R=\emptyset$;
\item[\textup{(ii)}] $\sum_{i=1}^{b}|T_i|\le Cn^{-\gamma}M_n$;
\item[\textup{(iii)}] $|R|\le(\pi_d+\eta)M_n$;
\item[\textup{(iv)}] $R$ is determined by $\mathbf T$.
\end{enumerate}
\end{proposition}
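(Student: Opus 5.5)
We apply Theorem~\ref{container_theorem} to $\mathcal H=\mathcal H_{n,d}$ with $r=2^d$, taking $\tau=Kn^{-\gamma}$ for a large constant $K=K(d,\eta)$ and $\xi=\xi(d,\eta)>0$ small, both chosen below. The plan is to verify the codegree condition $\delta(\mathcal H,\tau)\le\xi/(12r!)$, and then to use supersaturation (Proposition~\ref{prop:4.1}) to convert the statement $e(\mathcal H[\mathcal C(\mathbf T)])\le\xi e(\mathcal H)$ into a size bound on the container. We set $R=\mathcal C(\mathbf T)\setminus T$, where $T=\bigcup_iT_i$. Then (i) holds because $T_i\subseteq I\subseteq\mathcal C(\mathbf T)$, and (iv) holds because $\mathcal C(\mathbf T)$ and $T$ are both determined by $\mathbf T$.

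\emph{Codegree step.} By Lemma~\ref{lem:2.2}, $v=(d+1+o(1))M_n$ and the average degree is $D=re(\mathcal H)/v=\Theta_d(n^d)$. Using $\delta_j\le\Delta_j/(\tau^{j-1}D)$ together with Lemma~\ref{lem:5.1}, we get, for $2\le j\le r$,
\[
\delta_j\le O_d\bigl(n^{d-\lceil\log_2 j\rceil}\bigr)\,K^{-(j-1)}n^{\gamma(j-1)}\,n^{-d}=O_d\bigl(K^{-(j-1)}n^{\gamma(j-1)-\lceil\log_2j\rceil}\bigr).
\]
By Lemma~\ref{lem:5.2}, the exponent of $n$ is at most $0$ for every $j$. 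Hence $\delta_j=O_d(K^{-(j-1)})\le O_d(1/K)$ uniformly in $n$. The sum defining $\delta(\mathcal H,\tau)$ has finitely many terms, with constants depending only on $d$, so choosing $K$ large in terms of $d$ and $\xi$ gives $\delta(\mathcal H,\tau)\le\xi/(12r!)$. We also need $\tau<1/2$, which holds for large $n$. This step is where the exponent $\gamma_d$ is forced: the binding constraint is $j=r$, where Lemma~\ref{lem:5.2} is tight.

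\emph{Size of the container.} Theorem~\ref{container_theorem} gives $b=b(r,\xi)$ and $|T_i|\le a_r\tau v\le a_r K(d+2)n^{-\gamma}M_n$. This proves (ii) with $C=ba_rK(d+2)$, which depends only on $d$ and $\eta$ once $\xi$ and $K$ are fixed. For (iii), apply Proposition~\ref{prop:4.1} with $\epsilon=\eta$ to obtain $c=c(d,\eta)$. Then choose $\xi$ so small that $\xi e(\mathcal H_{n,d})<cn^dM_n$ for large $n$; this is possible since $e(\mathcal H_{n,d})=(1/(2^dd!)+o(1))n^dM_n$, so $\xi<c\,2^dd!/2$ suffices. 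If $|\mathcal C(\mathbf T)|\ge(\pi_d+\eta)M_n$, Proposition~\ref{prop:4.1} would force $e(\mathcal H[\mathcal C(\mathbf T)])\ge cn^dM_n>\xi e(\mathcal H)$, a contradiction. Hence $|R|\le|\mathcal C(\mathbf T)|<(\pi_d+\eta)M_n$.

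The order of choices is $\eta\to c\to\xi\to b,K\to C$, and it has no circularity. The only delicate point is the uniform codegree bound, which rests entirely on Lemmas~\ref{lem:5.1} and~\ref{lem:5.2}. Everything else is bookkeeping. One should also observe that the container function depends on $n$, but this causes no difficulty, since (iv) only asks that $R$ be determined by $\mathbf T$ for each fixed $n$.
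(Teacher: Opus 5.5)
Your proposal is correct and follows essentially the same route as the paper. Like the paper, you take $\tau=Kn^{-\gamma}$, control the codegrees with Lemmas~\ref{lem:5.1} and~\ref{lem:5.2} (tight only at $j=r$), use Proposition~\ref{prop:4.1} to turn $e(\mathcal H[\mathcal C(\mathbf T)])\le\xi e(\mathcal H)$ into $|\mathcal C(\mathbf T)|<(\pi_d+\eta)M_n$, and set $R=\mathcal C(\mathbf T)\setminus T$; the only cosmetic difference is that the paper takes $\xi=\min\{1/4,c/2\}$ using the cruder bound $e(\mathcal H_{n,d})\le n^dM_n$, while you calibrate $\xi$ against the leading constant $1/(2^dd!)$.
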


\begin{proof}
By Proposition~\ref{prop:4.1}, there is $c=c(d,\eta)>0$ such that for all sufficiently large $n$, every $U\subseteq\mathcal W_{n,d+1}$ with $|U|\ge(\pi_d+\eta)M_n$ spans at least $cn^dM_n$ edges of $\mathcal H_{n,d}$. Let $\xi=\min\{1/4,c/2\}$.

By Lemma~\ref{lem:2.2}, $e(\mathcal H_{n,d})\le n^dM_n$ for large $n$ and the average degree of $\mathcal H_{n,d}$ is $D_n=\Theta_d(n^d)$. Let $\tau=Kn^{-\gamma}$, where $K$ is a large constant to be chosen. By Lemmas~\ref{lem:5.1} and~\ref{lem:5.2}, for $2\le j\le r$,
\[
\frac{\Delta_j(\mathcal H_{n,d})}{\tau^{j-1}D_n}
=O_d\left(K^{-(j-1)}n^{-\lceil\log_2j\rceil+\gamma(j-1)}\right),
\]
where the exponent of $n$ is negative for $j<r$ and zero for $j=r$. Hence we may choose $K$ sufficiently large, and then $n$ sufficiently large, so that $\delta(\mathcal H_{n,d},\tau)\le\xi/(12r!)$ and $\tau<1/2$. This includes the case $d=1$, in which only $j=r=2$ occurs.

Theorem~\ref{container_theorem} gives an ordered tuple $\mathbf T$ and a container $\mathcal C(\mathbf T)$ with $I\subseteq\mathcal C(\mathbf T)$ and
\[
e(\mathcal H_{n,d}[\mathcal C(\mathbf T)])\le\xi e(\mathcal H_{n,d})<cn^dM_n.
\]
By the choice of $c$, we have $|\mathcal C(\mathbf T)|<(\pi_d+\eta)M_n$. Let $R(\mathbf T)=\mathcal C(\mathbf T)\setminus T$. Then (i), (iii) and (iv) hold, and
\[
\sum_{i=1}^{b}|T_i|\le ba_r\tau|\mathcal W_{n,d+1}|\le ba_rK(d+1)n^{-\gamma}M_n,
\]
which gives (ii).
\end{proof}

We also need a bound on the number of ordered tuples. Let $v_n=|\mathcal W_{n,d+1}|$ and $A=2^b-1$. The number of ordered tuples $(T_1,\dots,T_b)$ of subsets of $\mathcal W_{n,d+1}$ whose union has size $t$ is at most
\[
\binom{v_n}{t}A^t.
\]
Indeed, after choosing the union, every vertex of the union is assigned the non-empty subset of $[b]$ consisting of the indices $i$ with $T_i$ containing it. Note that this bound allows the sets $T_i$ to overlap.

\begin{proof}[Proof of Theorem~\ref{thm:1.2}]
Let $\mathcal F_n\subseteq\mathcal W_{n,d+1}$ be a $Q_d$-free family of size $\operatorname{ex}_d(n)$. Every subfamily of $\mathcal F_n$ is $Q_d$-free, so $\operatorname{forb}_d(n)\ge2^{|\mathcal F_n|}=2^{(\pi_d+o(1))M_n}$.

For the upper bound, fix $\eta>0$ and apply Proposition~\ref{prop:Q_d containers} with $\eta/2$ in place of $\eta$. Let $s_0=\lfloor Cn^{-\gamma}M_n\rfloor$ and $A=2^b-1$. By the bound above, the number of ordered tuples with union of size at most $s_0$ is at most
\[
\sum_{t=0}^{s_0}\binom{v_n}{t}A^t=2^{O_{d,\eta}(s_0\log(v_n/s_0))}=2^{o(M_n)},
\]
where we used $\binom{v_n}{t}\le(ev_n/t)^t$, $v_n=\Theta_d(M_n)$ and $s_0/v_n=\Theta_{d,\eta}(n^{-\gamma})$. For a fixed tuple $\mathbf T$, every independent set with this tuple is a subset of $T\cup R(\mathbf T)$, which has size at most $(\pi_d+\eta/2)M_n+s_0$. Thus
\[
\operatorname{forb}_d(n)\le2^{o(M_n)}2^{(\pi_d+\eta/2)M_n+s_0}=2^{(\pi_d+\eta/2+o(1))M_n}.
\]
Since $\eta>0$ was arbitrary, the upper bound follows.
\end{proof}

\section{The random layer window}\label{sec8}

Let $X=\mathcal W_{n,d+1}(p)$. We first prove part (i) of Theorem~\ref{thm:1.3}, for which we need the following estimate.

\begin{lemma}\label{lem:weighted_fingerprint}
Let $p=p(n)$ and $\tau=\tau(n)$ be positive with $p/\tau\to\infty$, and let $v$ and $s$ be non-negative integers with $v=O(M_n)$ and $s=O(\tau M_n)$. Then for every fixed $A>0$,
\[
\sum_{t=0}^{s}\binom vt(Ap)^t=\exp(o(pM_n)),
\]
where $\binom vt=0$ for $t>v$.
\end{lemma}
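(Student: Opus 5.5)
The plan is to bound the sum by its number of terms times its largest term, and to show that even the crudest bound on the largest term is $\exp(o(pM_n))$. First we dispose of degenerate cases. If $s=0$ the sum equals $1$. We may also replace $s$ by $\min\{s,v\}$, since terms with $t>v$ vanish. If $pM_n$ stays bounded along a subsequence, then $\tau M_n\to0$ along it because $p/\tau\to\infty$. Hence $s=O(\tau M_n)$ forces $s=0$ for large $n$ on that subsequence. So we may assume $pM_n\to\infty$ and $s\ge1$.

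Next we use $\binom vt\le(ev/t)^t$ to write
\[
\sum_{t=0}^{s}\binom vt(Ap)^t\le 1+\sum_{t=1}^{s}\Bigl(\frac{eAvp}{t}\Bigr)^t .
\]
The function $t\mapsto t\log(eAvp/t)$ is increasing for $t\le Avp$. Since $s=O(\tau M_n)=o(pM_n)$ and $v=O(M_n)$, we need to compare $s$ with $Avp$. If $v$ is itself $\Omega(M_n)$, then $s\le Avp$ for large $n$, and every term is at most $(eAvp/s)^s$. When $v$ is small, we instead bound $\binom vt\le 2^v$ and the number of terms by $v+1$. This gives a bound $\exp(O(v)+v\log^+(Ap))$, which is small whenever $v\le s$. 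The cleanest route is to split according to whether $s\le Avp$.

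The main estimate is
\[
(s+1)\Bigl(\frac{eAvp}{s}\Bigr)^s=\exp\Bigl(\log(s+1)+s\log\frac{eAvp}{s}\Bigr).
\]
Write $x=s/(pM_n)$, so $x\to0$. Then
\[
s\log\frac{eAvp}{s}\le pM_n\, x\log\frac{C'}{x}
\]
with $C'=eA\sup_n v/M_n$. The function $x\log(C'/x)$ tends to $0$ as $x\to0$, even if $x$ varies with $n$, which gives $o(pM_n)$. This is the key step. Also $\log(s+1)\le\log(pM_n+1)=o(pM_n)$, because $pM_n\to\infty$.

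In the complementary case $s>Avp$, all terms with $t\ge Avp$ are bounded by the term at $t=\lceil Avp\rceil$, which is at most $e^{O(Avp)}$. Every term is therefore at most $\exp(O(vp))$, and there are at most $v+1$ terms. Here $vp\le s/A=o(pM_n)$ and $\log(v+1)=O(\log M_n)$. That last quantity is not automatically $o(pM_n)$. However, we may use $v+1\le s/(Ap)+1$, so $\log(v+1)\le\log(s+1)+\log(1/(Ap))$. I expect this to be the only fiddly point. Alternatively, we can simply use $\sum_t\binom vt(Ap)^t\le(1+Ap)^v\le e^{Apv}\le e^{s}=\exp(o(pM_n))$. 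Since the full binomial sum bounds the truncated one, this settles the case cleanly.

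In fact the same inequality $(1+Ap)^v$ is useful throughout, so the only real obstacle is to handle both regimes ($v$ comparable to $M_n$ versus $v$ small) without losing a $\log$ factor.
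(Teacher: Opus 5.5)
Your proof is correct, but it takes a longer route than the paper. You bound the sum by the number of terms times the largest term, using $\binom vt\le(ev/t)^t$ and the unimodality of $t\mapsto t\log(eAvp/t)$. This forces two extra pieces of bookkeeping. First, you need a split at $s=Avp$, and you close the regime $s>Avp$ by completing to the full binomial sum, $(1+Ap)^v\le e^{Apv}\le e^s$. Second, you need the preliminary reduction to $pM_n\to\infty$ to absorb the factor $\log(s+1)$.

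The paper instead makes your second-regime trick work uniformly. Once $x=p/\tau>1$, it writes $(Ap)^t=(A\tau)^tx^t\le(A\tau)^tx^s$ for $t\le s$ and then completes the sum. This gives $\sum_{t\le s}\binom vt(Ap)^t\le x^s(1+A\tau)^v$, whose logarithm is at most $s\log x+A\tau v=O(\tau M_n(\log(p/\tau)+1))=o(pM_n)$. Trading $p$ for $\tau$ at the price $x^s$ keeps the completion harmless even when $v=\Theta(M_n)$. So the paper needs no case split, no count of terms and no separate treatment of degenerate cases.

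Both arguments rest on the same fact, $y\log(1/y)\to0$ as $y\to0$. The paper applies it to $y=\tau/p$ and you apply it to $y=s/(pM_n)$. Your version shows directly that only $s=o(pM_n)$ and $v=O(M_n)$ matter, which is essentially the stated hypothesis in another form (take $\tau=s/M_n$ when $s\ge1$).

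Finally, your worry about $\log(v+1)$ in the regime $s>Avp$ is unnecessary. There are at most $s+1$ nonzero terms there as well. Each is at most $e^{Avp}$, the maximum of $t\mapsto(eAvp/t)^t$. That already gives $\log(s+1)+Avp=o(pM_n)$ under your reduction.
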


\begin{proof}
Let $x=p/\tau$, so $x>1$ for large $n$. Since $t\le s$,
\[
\sum_{t=0}^{s}\binom vt(Ap)^t=\sum_{t=0}^{s}\binom vt(A\tau)^tx^t\le x^s(1+A\tau)^v.
\]
The logarithm of the right-hand side is at most $s\log x+Av\tau=O(\tau M_n(\log x+1))=o(pM_n)$, since $\tau\log(p/\tau)=o(p)$ when $p/\tau\to\infty$.
\end{proof}

\begin{proposition}\label{prop:dense_random}
If $pn^{\gamma_d}\to\infty$, then with high probability $\alpha_d(\mathcal W_{n,d+1}(p))=(\pi_d+o(1))pM_n$.
\end{proposition}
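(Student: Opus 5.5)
The lower bound and the upper bound are handled separately.

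\emph{Lower bound.} Let $\mathcal F_n$ be a $Q_d$-free family of size $\operatorname{ex}_d(n)=(\pi_d+o(1))M_n$. Then $\mathcal F_n\cap X$ is $Q_d$-free, and $|\mathcal F_n\cap X|\sim\operatorname{Bin}(|\mathcal F_n|,p)$. Since $pM_n\to\infty$ (indeed $pn^{\gamma_d}\to\infty$ and $M_n$ grows exponentially), Chebyshev's inequality gives $|\mathcal F_n\cap X|\ge(\pi_d-o(1))pM_n$ with high probability.

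\emph{Upper bound.} This is a first-moment argument over container fingerprints. Fix $\eta>0$ and apply Proposition~\ref{prop:Q_d containers} with $\eta/2$, with $\tau=Kn^{-\gamma}$ and $s_0=\lfloor Cn^{-\gamma}M_n\rfloor$. Suppose $X$ contains an independent set $I$ with $|I|\ge(\pi_d+\eta)pM_n$. Take its tuple $\mathbf T$ with union $T$, where $|T|=t\le s_0$. Then two things hold:
\begin{itemize}
\item[(a)] $T\subseteq X$;
\item[(b)] $|X\cap R(\mathbf T)|\ge|I|-t\ge(\pi_d+\eta)pM_n-s_0\ge(\pi_d+3\eta/4)pM_n$, using $s_0=o(pM_n)$, which holds because $p\gg n^{-\gamma}$.
\end{itemize}
The events (a) and (b) are independent, since $R(\mathbf T)$ is disjoint from $T$ and is determined by $\mathbf T$. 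Event (a) has probability $p^t$. For (b), $|X\cap R|$ is binomial with mean at most $(\pi_d+\eta/2)pM_n$, so the Chernoff bound gives probability at most $\exp(-c_\eta pM_n)$ with $c_\eta>0$ depending on $d$ and $\eta$.

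Taking a union bound over tuples and using the count $\binom{v_n}{t}A^t$ for tuples with union of size $t$, the failure probability is at most
\[
\sum_{t=0}^{s_0}\binom{v_n}{t}A^tp^t\,e^{-c_\eta pM_n}.
\]

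\emph{Main obstacle.} The crux is to show that the fingerprint entropy is $\exp(o(pM_n))$ rather than merely $2^{o(M_n)}$. This is exactly Lemma~\ref{lem:weighted_fingerprint}, applied with $v=v_n=O(M_n)$, $s=s_0=O(\tau M_n)$ and $p/\tau=pn^{\gamma}/K\to\infty$. It gives a total failure probability of $\exp(o(pM_n)-c_\eta pM_n)\to0$, which depends on $pM_n\to\infty$.

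Since $\eta>0$ was arbitrary, with high probability $\alpha_d(X)\le(\pi_d+o(1))pM_n$. Combined with the lower bound, this proves the claim.
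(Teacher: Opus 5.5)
Your proposal is correct and follows the paper's proof essentially step for step. The lower bound intersects an extremal family with $X$; for the upper bound you union-bound over fingerprint tuples, with the probability factor $p^t$ for $T\subseteq X$, a Chernoff bound on $|X\cap R(\mathbf T)|$ using the disjointness of $R(\mathbf T)$ and $T$, and Lemma~\ref{lem:weighted_fingerprint} to keep the fingerprint entropy at $\exp(o(pM_n))$. The remaining differences (Chebyshev instead of Chernoff for the lower bound, $\eta/2$ instead of $\eta/4$) are cosmetic.
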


\begin{proof}
Note that $pM_n\to\infty$. For the lower bound, let $\mathcal F_n\subseteq\mathcal W_{n,d+1}$ be a $Q_d$-free family of size $\operatorname{ex}_d(n)=(\pi_d+o(1))M_n$. By the Chernoff bound, $|X\cap\mathcal F_n|=(\pi_d+o(1))pM_n$ with high probability, and $X\cap\mathcal F_n$ is $Q_d$-free.

For the upper bound, fix $\eta>0$ and apply Proposition~\ref{prop:Q_d containers} with $\eta/4$ in place of $\eta$. Let $\tau=n^{-\gamma_d}$, so the total size of the fingerprints is at most $C\tau M_n=o(pM_n)$. Suppose that $X$ contains an independent set $I$ with $|I|\ge(\pi_d+\eta)pM_n$, and let $\mathbf T=(T_1,\dots,T_b)$, $R=R(\mathbf T)$ and $T=\bigcup_{i=1}^{b}T_i$ be as in Proposition~\ref{prop:Q_d containers}. For large $n$ we have $|T|\le\frac{\eta}{4}pM_n$. Since $I\subseteq T\cup R$ and $T\subseteq I\subseteq X$, we get
\[
|X\cap R|\ge|I|-|T|\ge\left(\pi_d+\frac{3\eta}{4}\right)pM_n.
\]
Now fix an admissible ordered tuple $\mathbf T$ arising from Proposition~\ref{prop:Q_d containers}, with union $T$ of size $t$. The set $R=R(\mathbf T)$ is determined by $\mathbf T$, is disjoint from $T$ and satisfies $|R|\le(\pi_d+\frac{\eta}{4})M_n$. Conditional on $T\subseteq X$, the members of $R$ still belong to $X$ independently with probability $p$, so $|X\cap R|$ has the binomial distribution with parameters $|R|$ and $p$, and the Chernoff bound shows that the conditional probability of the last inequality is at most $\exp(-c_{d,\eta}pM_n)$ for some $c_{d,\eta}>0$. The probability that $T\subseteq X$ is $p^t$, and by the bound in Section~\ref{sec6} the number of ordered tuples with union of size $t$ is at most $\binom{v_n}{t}A^t$, where $v_n=O_d(M_n)$ and $A=2^b-1$. Hence, by the union bound and Lemma~\ref{lem:weighted_fingerprint}, the probability that $X$ contains an independent set of size at least $(\pi_d+\eta)pM_n$ is at most
\[
\exp(-c_{d,\eta}pM_n)\sum_{t\le C\tau M_n}\binom{v_n}{t}(Ap)^t
=\exp(-c_{d,\eta}pM_n+o(pM_n))=o(1).
\]
Thus with high probability $\alpha_d(X)\le(\pi_d+\eta)pM_n$. Since $\eta>0$ was arbitrary, letting $\eta\to0$ along a sequence completes the proof.
\end{proof}

\begin{proposition}\label{prop:sparse_regime}
If $pn^{\gamma_d}\to0$ and $pM_n\to\infty$, then with high probability $\alpha_d(\mathcal W_{n,d+1}(p))=(d+1+o(1))pM_n$.
\end{proposition}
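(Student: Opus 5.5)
The upper bound is immediate: $X=\mathcal W_{n,d+1}(p)$ has $|X|=(d+1+o(1))pM_n$ with high probability by the Chernoff bound, because $\mathbb E|X|=p\,v(\mathcal H_{n,d})=(d+1+o(1))pM_n\to\infty$ by Lemma~\ref{lem:2.2}. For the lower bound I will use deletion: delete one vertex from every canonical subcube contained in $X$. The resulting family is $Q_d$-free by Lemma~\ref{lem:2.1}. It therefore suffices to show that, with high probability, the number of vertices removed is $o(pM_n)$.

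A direct count of edges does not work here. We have $\mathbb E\,e(\mathcal H_{n,d}[X])=p^{2^d}e(\mathcal H_{n,d})=\Theta(p^{2^d}n^dM_n)$, and $p^{2^d}n^d=o(p)$ exactly when $p^{2^d-1}n^d\to0$, i.e.\ $pn^{\gamma_d}\to0$. So the expected number of edges inside $X$ is $o(pM_n)$. By Markov's inequality, with high probability $X$ contains at most $\omega^{-1}pM_n$ edges for a suitable slowly growing choice (take $\omega^{-1}=\max\{(p^{2^d-1}n^d)^{1/2},\,(pM_n)^{-1/2}\}\to0$). Deleting one vertex per edge then removes at most $o(pM_n)$ vertices. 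This gives
\[
\alpha_d(X)\ge|X|-e(\mathcal H_{n,d}[X])=(d+1-o(1))pM_n
\]
with high probability, which completes the lower bound.

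The step to verify carefully is that the exponent bookkeeping matches $\gamma_d$ precisely: $p^{2^d}n^dM_n=o(pM_n)\iff p^{2^d-1}n^d\to0\iff pn^{d/(2^d-1)}\to0$. I also need to check that the concentration of $|X|$ uses only $pM_n\to\infty$. Both are routine, so the proof reduces to the first-moment computation. No codegree information (Lemma~\ref{lem:5.1}) is needed in this regime, in line with the deletion argument of \cite[Proposition~5]{Gerbner2021Supersaturation}.
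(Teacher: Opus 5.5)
Your proposal is correct and follows the paper's proof: Chernoff for $|X|=(d+1+o(1))pM_n$, the first-moment bound $\mathbb E\,e(\mathcal H_{n,d}[X])=pM_n\cdot\Theta_d\bigl((pn^{\gamma_d})^{2^d-1}\bigr)=o(pM_n)$, Markov's inequality, and deleting one vertex per surviving canonical subcube. One wording fix: drop the sentence ``A direct count of edges does not work here'', because the argument that follows is exactly that direct count.
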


\begin{proof}
We follow the deletion argument of \cite[Proposition~5]{Gerbner2021Supersaturation}. By Lemma~\ref{lem:2.2} and the Chernoff bound, $|X|=(d+1+o(1))pM_n$ with high probability. Moreover,
\[
\mathbb E\,e(\mathcal H_{n,d}[X])=p^{2^d}e(\mathcal H_{n,d})=\Theta_d(p^{2^d}n^dM_n)=pM_n\cdot\Theta_d\left((pn^{\gamma_d})^{2^d-1}\right)=o(pM_n),
\]
so $e(\mathcal H_{n,d}[X])=o(pM_n)$ with high probability by Markov's inequality. Deleting one vertex from every edge of $\mathcal H_{n,d}[X]$ removes $o(pM_n)$ vertices and leaves an independent set. Hence $\alpha_d(X)\ge|X|-o(pM_n)=(d+1-o(1))pM_n$, and $\alpha_d(X)\le|X|$ trivially.
\end{proof}

Propositions~\ref{prop:dense_random} and~\ref{prop:sparse_regime} prove Theorem~\ref{thm:1.3}.

\section{Graded posets in their height window}\label{sec:general}

The arguments of Sections~\ref{sec2}--\ref{sec8} extend to other posets, with two differences. A poset need not have a copy in a window of its own height, and when copies exist they may use different numbers of coordinates. We therefore keep copies with different coordinate patterns apart, both in the supersaturation statement and in the container argument.

A finite nonempty poset $P$ is graded of height $h$ if every maximal chain of $P$ has exactly $h$ elements. For $n\ge h-1$, recall that
\[
k_{n,h}=\left\lfloor\frac{n-h+1}{2}\right\rfloor,\qquad
\mathcal W_{n,h}=\bigcup_{i=0}^{h-1}\binom{[n]}{k_{n,h}+i}.
\]
For a graded poset $P$ of height $h$, let $\operatorname{ex}_P(n)$ and $\operatorname{forb}_P(n)$ be the maximum size and the number of $P$-free subfamilies of $\mathcal W_{n,h}$, let $N_P(\mathcal F)$ be the number of copies of $P$ in $\mathcal F$, counted as subfamilies, and let $\alpha_P(X)$ be the maximum size of a $P$-free subfamily of $X$. We call $P$ connected if its Hasse diagram is connected as an undirected graph.

\begin{lemma}\label{gen:rank}
Let $P$ be graded of height $h$. Then $P$ has a rank function $\operatorname{rk}:P\to\{0,\dots,h-1\}$ such that every maximal chain contains exactly one element of each rank, and every injective order-preserving map $\varphi:P\to\mathcal W_{n,h}$ satisfies $|\varphi(x)|=k_{n,h}+\operatorname{rk}(x)$ for all $x\in P$. In particular, if $y$ covers $x$, then $\varphi(y)\setminus\varphi(x)$ has exactly one element.
\end{lemma}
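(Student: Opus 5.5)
We need two things: a rank function on $P$, and the rigidity of its image under any injective order-preserving map into $\mathcal W_{n,h}$.

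\textbf{Step 1: defining the rank.} For $x\in P$ we define $\operatorname{rk}(x)$ as the number of elements strictly below $x$ in a maximal chain through $x$, minus nothing. More precisely, take the length of a longest chain in $P$ ending at $x$, minus one. We must show every maximal chain through $x$ has exactly $\operatorname{rk}(x)$ elements below $x$.

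Take two saturated chains from a minimal element up to $x$, with $a$ and $a'$ elements below $x$. Extend $x$ upward by one fixed saturated chain to a maximal element, with $b$ elements above $x$. Concatenating gives two maximal chains of $P$. Both have exactly $h$ elements, so $a+1+b=h=a'+1+b$, and hence $a=a'$.

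Thus the number of elements below $x$ in any maximal chain through $x$ is a well-defined integer $\operatorname{rk}(x)\in\{0,\dots,h-1\}$. In a maximal chain $x_0<x_1<\dots<x_{h-1}$ we then have $\operatorname{rk}(x_i)=i$, so every maximal chain contains exactly one element of each rank. Every $x$ lies in some maximal chain, so the rank is defined everywhere.

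\textbf{Step 2: the sizes of the image sets.} Let $\varphi$ be injective and order-preserving, and let $x\in P$ lie in a maximal chain $x_0<\dots<x_{h-1}$. Injectivity and order preservation make $\varphi(x_0)\subsetneq\varphi(x_1)\subsetneq\dots\subsetneq\varphi(x_{h-1})$ a strict chain of $h$ sets. These sets lie in $\mathcal W_{n,h}$, which has exactly $h$ levels $k_{n,h},\dots,k_{n,h}+h-1$.

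The sizes therefore strictly increase through $h$ values in a set of $h$ consecutive integers, which forces $|\varphi(x_i)|=k_{n,h}+i$. Taking $x=x_i$ gives $|\varphi(x)|=k_{n,h}+\operatorname{rk}(x)$, exactly as in the proof of Lemma~\ref{lem:2.1}.

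\textbf{Step 3: covering pairs.} If $y$ covers $x$, we show that $\operatorname{rk}(y)=\operatorname{rk}(x)+1$. Choose a saturated chain from a minimal element to $x$, append $y$, and extend it upward to a maximal chain. This chain is still saturated because $y$ covers $x$, so $x$ and $y$ are consecutive in a maximal chain and their ranks differ by one.

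Then $\varphi(x)\subseteq\varphi(y)$ and the sizes differ by one, so $\varphi(y)\setminus\varphi(x)$ is a single element.

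\textbf{Main obstacle.} The only delicate point is the well-definedness of the rank in Step 1. This requires the standard argument that saturated chains between an element and a maximal element, or a minimal element, can be concatenated into maximal chains. That holds because a chain is maximal exactly when it starts at a minimal element, ends at a maximal element, and is saturated.
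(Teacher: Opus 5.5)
Your proof is correct and follows essentially the same route as the paper: you concatenate saturated chains below $x$ with a fixed saturated chain above $x$ into maximal chains of $P$ to show the rank is well defined, and then use that the image of a maximal chain is a strict chain of $h$ sets in $h$ consecutive levels. Your Step~3, which shows that a covering pair lies consecutively on some maximal chain, spells out the ``in particular'' clause that the paper leaves implicit.
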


\begin{proof}
Fix $x\in P$. A maximal chain of the down-set of $x$ followed by a maximal chain of the up-set of $x$, with $x$ counted once, is a maximal chain of $P$ and hence has $h$ elements. Thus all maximal chains of the down-set of $x$ have the same number of elements, and we let $\operatorname{rk}(x)$ be this number minus one. The same argument for the up-set shows that every maximal chain of $P$ contains one element of each rank. The image of a maximal chain under $\varphi$ consists of $h$ distinct sets in $h$ consecutive levels that increase strictly, so it contains one set from each level, in order. Since every $x$ lies on a maximal chain, $|\varphi(x)|=k_{n,h}+\operatorname{rk}(x)$.
\end{proof}

\begin{proposition}\label{gen:limit}
For every graded poset $P$ of height $h$, the limit $\pi(P)=\lim_{n\to\infty}\operatorname{ex}_P(n)/M_n$ exists. If $h\ge2$, then $h-1\le\pi(P)\le h$.
\end{proposition}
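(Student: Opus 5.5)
The plan is to copy the proof of Proposition~\ref{prop:3.2}, replacing $d+1$ by $h$ and canonical subcubes by arbitrary copies of $P$. For $n\ge m\ge h-1$, define a centred $m$-section of $\mathcal W_{n,h}$ as a pair $(B,Y)$ of disjoint subsets of $[n]$ with $|Y|=m$ and $|B|=k_{n,h}-k_{m,h}$. This requires $0\le k_{n,h}-k_{m,h}\le n-m$, which holds because $k_{n,h}$ is nondecreasing in $n$ and grows by at most $1$ when $n$ grows by $2$. The associated family is
\[
\mathcal W(B,Y)=\{B\cup Z:Z\subseteq Y,\ k_{m,h}\le|Z|\le k_{m,h}+h-1\}.
\]
The map $Z\mapsto B\cup Z$ is an order isomorphism from $\mathcal W_{m,h}$ onto $\mathcal W(B,Y)$. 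Therefore, if $\mathcal F\subseteq\mathcal W_{n,h}$ is $P$-free, then so is the preimage of $\mathcal F\cap\mathcal W(B,Y)$ in $\mathcal W_{m,h}$. This gives $|\mathcal F\cap\mathcal W(B,Y)|\le\operatorname{ex}_P(m)$. Note that only order preservation and injectivity are used, so Lemma~\ref{gen:rank} is not even needed here.

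The proof of Lemma~\ref{lem:3.1} goes through verbatim. With $\rho_{m,h}=\min_{0\le i\le h-1}\binom{m}{k_{m,h}+i}/M_m$ it gives
\[
\mathbb E|\mathcal F\cap\mathcal W(B,Y)|\ge\rho_{m,h}\frac{M_m}{M_n}|\mathcal F|.
\]
Applying this to an extremal $\mathcal F$ yields $a_n\le a_m/\rho_{m,h}$, where $a_n=\operatorname{ex}_P(n)/M_n$. Since $\rho_{m,h}\to1$ for fixed $h$, the liminf/limsup argument of Proposition~\ref{prop:3.2} shows that $a_n$ converges. The sequence is bounded, because $a_n\le|\mathcal W_{n,h}|/M_n\le h$, so the limit $\pi(P)$ is finite. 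The case $h=1$ (an antichain) needs no separate treatment for existence.

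For the bounds with $h\ge2$, the upper bound $\pi(P)\le h$ is immediate from $|\mathcal W_{n,h}|\le hM_n$. For the lower bound, I would take the union of any $h-1$ of the $h$ levels, for instance all levels except the top one. By Lemma~\ref{gen:rank}, every copy of $P$ in $\mathcal W_{n,h}$ sends elements of rank $h-1$ to level $k_{n,h}+h-1$, and such elements exist because every maximal chain has $h$ elements. So this union is $P$-free. Its size is $(h-1+o(1))M_n$, since each level within $O_h(1)$ of $n/2$ has $(1+o(1))M_n$ sets. Hence $\pi(P)\ge h-1$.

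The only step that is not completely routine is checking that the sections are well defined and centred, i.e.\ that $0\le k_{n,h}-k_{m,h}\le n-m$ and that the $i$-th level of $\mathcal W(B,Y)$ lies in level $k_{n,h}+i$ of $\mathcal W_{n,h}$. Both follow directly from the floor formula for $k_{n,h}$.
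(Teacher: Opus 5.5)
Your proposal is correct and follows essentially the paper's proof. The paper uses the same centred $m$-sections of $\mathcal W_{n,h}$, the averaging bound of Lemma~\ref{lem:3.1} giving $a_n\le a_m/\rho_{m,h}$, and the liminf/limsup argument of Proposition~\ref{prop:3.2}, with $\pi(P)\ge h-1$ from $h-1$ full levels and $\pi(P)\le h$ from $|\mathcal W_{n,h}|\le hM_n$. The only minor difference is that the paper shows a union of $h-1$ levels is $P$-free by noting that it contains no chain of $h$ sets, which is more direct than appealing to Lemma~\ref{gen:rank}.
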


\begin{proof}
For $n\ge m\ge h-1$, a centred $m$-section of $\mathcal W_{n,h}$ is a pair $(B,Y)$ of disjoint subsets of $[n]$ with $|Y|=m$ and $|B|=k_{n,h}-k_{m,h}$, and its family $\mathcal W(B,Y)$ consists of the sets $B\cup Z$ with $Z\subseteq Y$ and $k_{m,h}\le|Z|\le k_{m,h}+h-1$. The map $Z\mapsto B\cup Z$ is an order isomorphism from $\mathcal W_{m,h}$ onto $\mathcal W(B,Y)$. Let
\[
\rho_{m,h}=\min_{0\le i<h}\frac{\binom{m}{k_{m,h}+i}}{M_m},
\]
so that $\rho_{m,h}\to1$ as $m\to\infty$. The proof of Lemma~\ref{lem:3.1} gives $\mathbb E|\mathcal F\cap\mathcal W(B,Y)|\ge\rho_{m,h}(M_m/M_n)|\mathcal F|$ for a uniformly random section, and the intersection of a $P$-free family with a section is $P$-free. Hence $a_n=\operatorname{ex}_P(n)/M_n$ satisfies $a_n\le a_m/\rho_{m,h}$, and the limit exists exactly as in Proposition~\ref{prop:3.2}. A union of $h-1$ levels contains no chain of $h$ sets and is therefore $P$-free, which gives $\pi(P)\ge h-1$, and $\pi(P)\le h$ follows from $|\mathcal W_{n,h}|=(h+o(1))M_n$.
\end{proof}

Up to Theorem~\ref{gen:connected-main}, let $P$ be connected and graded of height $h\ge2$, and let $r=|P|$. For a copy $\mathcal E=\{F_x:x\in P\}$ of $P$ in $\mathcal W_{n,h}$, let
\[
A=\bigcap_{x\in P}F_x,\qquad X=\Big(\bigcup_{x\in P}F_x\Big)\setminus A,\qquad t=|X|.
\]
Removing $A$ from every member and identifying $X$ with $[t]$, we obtain a family $\mathcal T$ of $r$ distinct subsets of $[t]$ with empty intersection and union $[t]$. We identify two such families if a permutation of $[t]$ maps one onto the other, and we call $\mathcal T$ the type of the copy. Let $\mathfrak T(P)$ be the set of all types of copies of $P$ in $\mathcal W_{n,h}$, over all $n$. For $\mathcal T\in\mathfrak T(P)$ let $t(\mathcal T)=t$ and let $q(\mathcal T)$ be the size of a smallest member of $\mathcal T$; by Lemma~\ref{gen:rank} the members of $\mathcal T$ have sizes $q(\mathcal T),\dots,q(\mathcal T)+h-1$. The type depends only on the image family, so additional comparabilities inside a weak copy, or several embeddings of $P$ with the same image, do not produce further types or further copies.

\begin{lemma}\label{gen:finite}
Every $\mathcal T\in\mathfrak T(P)$ satisfies $h-1\le t(\mathcal T)\le r-1$. Hence $\mathfrak T(P)$ is finite, and every type occurs in $\mathcal W_{n,h}$ for all sufficiently large $n$.
\end{lemma}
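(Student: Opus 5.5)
The lower bound $t\ge h-1$ comes from chains; the upper bound $t\le r-1$ comes from connectivity of the Hasse diagram, using Lemma~\ref{gen:rank}; finiteness and occurrence for large $n$ are then bookkeeping.

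\emph{Lower bound.} Since $h\ge2$, $P$ has a maximal chain $x_0<\dots<x_{h-1}$, and by Lemma~\ref{gen:rank} its image consists of sets $F_{x_0}\subsetneq\dots\subsetneq F_{x_{h-1}}$ of consecutive sizes. Then $F_{x_{h-1}}\setminus F_{x_0}$ has $h-1$ elements. Each of these lies in the union of the copy but not in the intersection $A\subseteq F_{x_0}$, so $t\ge h-1$.

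\emph{Upper bound.} I would use connectivity directly. Fix a spanning tree of the Hasse diagram of $P$; it has $r-1$ edges. For a cover $x\lessdot y$, Lemma~\ref{gen:rank} gives $F_y\triangle F_x=\{e_{xy}\}$, a single element.

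Now take any $z\in X$. Then $z$ lies in some member $F_u$ and misses some member $F_w$. Along the tree path from $u$ to $w$, membership of $z$ must flip at some edge, so $z=e_{xy}$ for an edge $xy$ of the tree. Hence $X\subseteq\{e_{xy}\}$ over the tree edges, and $t\le r-1$.

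\emph{Finiteness.} A type is a family of $r$ subsets of $[t]$ with $t\le r-1$, taken up to permutation, so there are finitely many types.

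\emph{Occurrence for all large $n$.} Suppose $\mathcal T$ arises from a copy in $\mathcal W_{n_0,h}$, and let $q=q(\mathcal T)$. For any $n$, I would place $\mathcal T$ on a $t$-set $X\subseteq[n]$ and add a set $A$ disjoint from $X$ with $|A|=k_{n,h}-q$. This needs $0\le k_{n,h}-q$ and $|A|+t\le n$.

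The point needing care is the order: the family $\{A\cup T:T\in\mathcal T\}$ is order-isomorphic to $\mathcal T$, which is order-isomorphic to the original copy. That copy contains an injective order-preserving image of $P$, so the new family contains a copy of $P$ of type $\mathcal T$.

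Its levels run from $k_{n,h}$ to $k_{n,h}+h-1$, so it lies in $\mathcal W_{n,h}$. The constraints hold because $q\le t\le r-1$ is bounded while $k_{n,h}\approx n/2$, so they are satisfied for $n$ large. This shows that every type occurs in $\mathcal W_{n,h}$ for all sufficiently large $n$.
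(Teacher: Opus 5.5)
Your proof is correct and follows essentially the same route as the paper: a spanning tree of the Hasse diagram together with Lemma~\ref{gen:rank} bounds the number of varying coordinates by $r-1$, a maximal chain gives $t\ge h-1$, and a type is realised for large $n$ by adjoining a common set of size $k_{n,h}-q(\mathcal T)$. Your extra check that the new family is order-isomorphic to the original copy, and is therefore still a copy of $P$ of type $\mathcal T$, spells out a step the paper leaves implicit.
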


\begin{proof}
Take a spanning tree of the Hasse diagram of $P$. By Lemma~\ref{gen:rank}, the images of the two ends of each of its $r-1$ edges differ in exactly one coordinate, and a coordinate that is not constant on the copy changes along the tree path between two members on which it differs. Hence at most $r-1$ coordinates vary. The image of a maximal chain changes $h-1$ distinct coordinates, which gives the lower bound. There are finitely many families of $r$ subsets of a set of size at most $r-1$, so $\mathfrak T(P)$ is finite. Finally, a type $\mathcal T$ is realised in $\mathcal W_{n,h}$ by adjoining a common set of size $k_{n,h}-q(\mathcal T)$ to its members and choosing $t(\mathcal T)$ further coordinates, which is possible for all large $n$.
\end{proof}

The set $\mathfrak T(P)$ may be empty. Then $P$ has no copy in any height window, so $\operatorname{ex}_P(n)=|\mathcal W_{n,h}|$, $\operatorname{forb}_P(n)=2^{|\mathcal W_{n,h}|}$, $\alpha_P(X)=|X|$ for every $X\subseteq\mathcal W_{n,h}$, and $\pi(P)=h$. Examples are given in Section~\ref{sec:examples}. Until Theorem~\ref{gen:connected-main} we assume that $\mathfrak T(P)\ne\emptyset$.

For $\mathcal T\in\mathfrak T(P)$ let $\mathcal H_{\mathcal T}(n)$ be the $r$-uniform hypergraph on $\mathcal W_{n,h}$ whose edges are the copies of $P$ of type $\mathcal T$, let $e_{\mathcal T}(n)=e(\mathcal H_{\mathcal T}(n))$, and let $N_{\mathcal T}(\mathcal F)=e(\mathcal H_{\mathcal T}(n)[\mathcal F])$. These hypergraphs partition the copies of $P$. For $1\le j\le r$ let
\[
s_j(\mathcal T)=\min\Big\{\Big|\bigcup\mathcal J\setminus\bigcap\mathcal J\Big|:\mathcal J\subseteq\mathcal T,\ |\mathcal J|=j\Big\}.
\]
Note that $s_1(\mathcal T)=0$, $s_r(\mathcal T)=t(\mathcal T)$, and $s_j(\mathcal T)\ge\lceil\log_2j\rceil$ for $j\ge2$, since $j$ distinct sets that agree outside a set of size $s$ satisfy $j\le2^s$.

\begin{lemma}\label{gen:degrees}
Let $\mathcal T\in\mathfrak T(P)$, let $t=t(\mathcal T)$ and $q=q(\mathcal T)$, and let $a(\mathcal T)$ be the number of permutations of $[t]$ that map $\mathcal T$ onto itself. For all sufficiently large $n$,
\[
e_{\mathcal T}(n)=\binom{n}{k_{n,h}-q}\binom{n-k_{n,h}+q}{t}\frac{t!}{a(\mathcal T)}
=\left(\frac{1}{2^ta(\mathcal T)}+o(1)\right)n^tM_n,
\]
and $\Delta_j(\mathcal H_{\mathcal T}(n))=\Theta_P(n^{t-s_j(\mathcal T)})$ for $1\le j\le r$. In particular, the average degree of $\mathcal H_{\mathcal T}(n)$ is $\Theta_P(n^t)$.
\end{lemma}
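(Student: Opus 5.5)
The plan is to mirror the proofs of Lemmas~\ref{lem:2.2} and~\ref{lem:5.1}, counting copies of type $\mathcal T$ through their pair $(A,\text{embedding of }\mathcal T)$.

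\emph{Edge count.} Fix $n$ large. A copy of type $\mathcal T$ determines $A=\bigcap F_x$ and $X=\bigcup F_x\setminus A$. Since the smallest member of $\mathcal T$ has size $q$ and the smallest member of the copy lies in level $k_{n,h}$ by Lemma~\ref{gen:rank}, we get $|A|=k_{n,h}-q$ and $|X|=t$. Conversely, pick $A\in\binom{[n]}{k_{n,h}-q}$, then a $t$-set $X$ disjoint from $A$, then a bijection $[t]\to X$. Transporting $\mathcal T$ along this bijection and adjoining $A$ gives a copy of $P$ of type $\mathcal T$ in $\mathcal W_{n,h}$. It is a copy because $\mathcal T$ arose from some copy, so the transported family contains an order-preserving image of $P$. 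It lies in the window because its member sizes are $k_{n,h},\dots,k_{n,h}+h-1$. Two bijections give the same family exactly when they differ by an automorphism of $\mathcal T$. Distinct pairs $(A,X)$ give distinct families, since $A$ and $X$ are recovered as the intersection and the varying part. This yields the displayed product. For the asymptotics, use $k_{n,h}-q=n/2+O(1)$, so $\binom{n}{k_{n,h}-q}=(1+o(1))M_n$ and $\binom{n-k_{n,h}+q}{t}t!=(1+o(1))(n/2)^t$. The average degree is $re_{\mathcal T}(n)/|\mathcal W_{n,h}|=\Theta_P(n^t)$.

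\emph{Upper bound on $\Delta_j$.} Let $F_1,\dots,F_j$ lie in a common edge, and set $I=\bigcap F_a$ and $U=\bigcup F_a$. Any edge $(A,X,\text{bijection})$ containing them has $A\subseteq I$ and $U\subseteq A\cup X$. The sets $F_a\setminus A$ form a $j$-subfamily $\mathcal J$ of the transported $\mathcal T$, so $|U\setminus I|\ge s_j(\mathcal T)$. As in Lemma~\ref{lem:5.1}, write $X=(U\setminus I)\sqcup P'\sqcup Q'$ with $P'=I\setminus A$ and $Q'=(A\cup X)\setminus U$. Then $|P'|+|Q'|=t-|U\setminus I|\le t-s_j$. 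Here $|P'|=|I|-(k_{n,h}-q)$ is fixed, so $|Q'|$ is fixed too. The number of choices of $(P',Q')$ is at most $n^{t-s_j}$, and there are at most $t!$ bijections. This gives $\Delta_j=O_P(n^{t-s_j})$.

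\emph{Lower bound on $\Delta_j$.} Choose $\mathcal J\subseteq\mathcal T$ with $|\mathcal J|=j$ attaining $s_j$. Let $V=\bigcup\mathcal J\setminus\bigcap\mathcal J$ (with $|V|=s_j$) and $I_0=\bigcap\mathcal J$. Fix the images of $V\cup I_0$ together with a fixed base part of $A$. Concretely, fix $F_1,\dots,F_j$ as the images of $\mathcal J$ under one embedding with $A_0$ and $X_0$. Then count embeddings that agree on $V$ and on $I_0$ as a set, but vary the remaining $t-s_j$ coordinates of $[t]\setminus(V\cup I_0)$ freely among coordinates outside $A_0\cup I$. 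Those coordinates outside $\bigcup\mathcal J$ can be chosen from $\Theta(n)$ unused elements each, giving $\Theta(n^{t-s_j-|I_0|})$ choices. For the coordinates in $I_0$, we instead move $|I_0|$ elements from $I$ into $A$ and back: choose which elements of $I$ (a set of size $\approx n/2$) play the role of $I_0$, with the rest forming $A$. This gives $\Theta(n^{|I_0|})$ further choices while keeping each $F_a$ unchanged. Each resulting copy contains $F_1,\dots,F_j$, and distinct choices give distinct families (they have different $X$), so $\Delta_j=\Omega_P(n^{t-s_j})$.

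The main obstacle is this last step: verifying that the reshuffling of $I$ between $A$ and the $I_0$-coordinates yields genuinely distinct copies, still of type $\mathcal T$, each containing all $F_a$. Once the $(A,X)$ parametrisation is set up this is just bookkeeping.
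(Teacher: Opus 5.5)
Your proposal is correct and follows essentially the same route as the paper: the same $(A,X,\text{bijection})$ parametrisation for $e_{\mathcal T}(n)$, the same split of the non-varying coordinates into those sent into $I$ and those sent outside $U$ for the upper bound on $\Delta_j$, and, for the lower bound, the same re-choice of these $t-s_j(\mathcal T)$ coordinates while keeping the image of an extremal $\mathcal J$ fixed. One small slip to fix: $[t]\setminus(V\cup I_0)$ has $t-s_j-|I_0|$ elements, not $t-s_j$, and these coordinates must be sent outside $U$, not merely outside $A_0\cup I=I$; your phrase ``unused elements'' implicitly assumes this.
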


\begin{proof}
The common intersection of a copy of type $\mathcal T$ has size $k_{n,h}-q$. After choosing it and the $t$ varying coordinates, there are exactly $t!/a(\mathcal T)$ image families of type $\mathcal T$. This gives the formula, and the asymptotic form follows from $\binom{n}{k_{n,h}-q}=(1+o(1))M_n$ and $n-k_{n,h}+q=n/2+O_P(1)$.

For the upper bound on $\Delta_j$, fix $j$ sets contained in a common edge of type $\mathcal T$ and let $I$ and $U$ be their intersection and union. The $j$ sets correspond to a subfamily $\mathcal J\subseteq\mathcal T$ with $|\mathcal J|=j$, in one of boundedly many ways, and the coordinates of $[t]$ on which $\mathcal J$ is not constant are mapped bijectively onto $U\setminus I$, in at most $t!$ ways. Suppose that $u$ coordinates of $[t]$ lie in every member of $\mathcal J$ and $v$ lie in no member of $\mathcal J$. The former are mapped into $I$ and the latter into $[n]\setminus U$, and once these images are chosen the edge is determined. Since $u+v=t-|\bigcup\mathcal J\setminus\bigcap\mathcal J|\le t-s_j(\mathcal T)$, the number of edges containing the $j$ sets is $O_P(n^{t-s_j(\mathcal T)})$.

For the lower bound, choose $\mathcal J$ attaining $s_j(\mathcal T)$ and fix the image of $\mathcal J$ inside one copy of type $\mathcal T$ in $\mathcal W_{n,h}$. Then $|I|=n/2+O_P(1)$ and $n-|U|=n/2+O_P(1)$, so there are $\Theta_P(n^{u+v})$ choices for the images of the remaining $u+v=t-s_j(\mathcal T)$ coordinates, and distinct choices give distinct edges up to a bounded multiplicity.
\end{proof}

For $\mathcal T\in\mathfrak T(P)$ let
\[
\nu_{\mathcal T}=\min_{2\le j\le r}\frac{s_j(\mathcal T)}{j-1},
\]
which is the minimum of $|\bigcup\mathcal J\setminus\bigcap\mathcal J|/(|\mathcal J|-1)$ over all subfamilies $\mathcal J\subseteq\mathcal T$ with at least two members, and let
\[
x_-(P)=\min_{\mathcal T}t(\mathcal T),\qquad
x_+(P)=\max_{\mathcal T}t(\mathcal T),\qquad
\kappa(P)=\frac{x_+(P)}{r-1},
\]
\[
\nu(P)=\min_{\mathcal T}\nu_{\mathcal T},\qquad
\nu_+(P)=\max_{\mathcal T}\nu_{\mathcal T},
\]
where $\mathcal T$ ranges over $\mathfrak T(P)$. These are positive rationals determined by $P$, and
\[
0<\nu(P)\le\nu_+(P)\le\kappa(P)\le1,\qquad\nu(P)\le\frac{x_-(P)}{r-1}.
\]
Indeed $\nu_{\mathcal T}\le s_r(\mathcal T)/(r-1)=t(\mathcal T)/(r-1)$, and $t(\mathcal T)\le r-1$ by Lemma~\ref{gen:finite}. For $Q_d$ there is a single type, namely $2^{[d]}$, and Lemmas~\ref{lem:5.1} and~\ref{lem:5.2} give $s_j=\lceil\log_2j\rceil$ and $\nu(Q_d)=\nu_+(Q_d)=\kappa(Q_d)=d/(2^d-1)$.

\begin{proposition}\label{gen:weighted-ss}
For every $\epsilon>0$ there is $c=c(P,\epsilon)>0$ such that, for all sufficiently large $n$, every $\mathcal F\subseteq\mathcal W_{n,h}$ with $|\mathcal F|\ge(\pi(P)+\epsilon)M_n$ satisfies
\[
\sum_{\mathcal T\in\mathfrak T(P)}\frac{N_{\mathcal T}(\mathcal F)}{e_{\mathcal T}(n)}\ge c.
\]
In particular $N_P(\mathcal F)=\Omega_{P,\epsilon}(n^{x_-(P)}M_n)$. Moreover, $\pi(P)<h$.
\end{proposition}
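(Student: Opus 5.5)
The plan is to transplant the averaging argument of Proposition~\ref{prop:4.1} to the general setting, counting each type separately. Fix $\epsilon>0$ and choose $m=m(P,\epsilon)$ so large that $\operatorname{ex}_P(m)\le(\pi(P)+\epsilon/8)M_m$ (by Proposition~\ref{gen:limit}), $(1-\rho_{m,h})(h+\epsilon)\le\epsilon/8$, and every type in the finite set $\mathfrak T(P)$ occurs in $\mathcal W_{m,h}$ (by Lemma~\ref{gen:finite}). For a uniformly random centred $m$-section $(B,Y)$ of $\mathcal W_{n,h}$, the bound $\mathbb E|\mathcal F\cap\mathcal W(B,Y)|\ge(\pi(P)+7\epsilon/8)M_m$ shows, exactly as before, that a proportion at least $\beta=3\epsilon/(4h)$ of sections are rich. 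A rich section contains a copy of $P$ lying in $\mathcal F$.

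The one new point is that a copy of $P$ inside $\mathcal W(B,Y)$ has the same type whether it is viewed in $\mathcal W(B,Y)\cong\mathcal W_{m,h}$ or in $\mathcal W_{n,h}$. This holds because the type is defined from the common intersection and the varying coordinates, and adjoining $B$ to every member changes neither. Hence $\mathcal H_{\mathcal T}(n)$ restricted to a section is a copy of $\mathcal H_{\mathcal T}(m)$. By symmetry under permutations of $[n]$, every edge of $\mathcal H_{\mathcal T}(n)$ lies in the same number $s_{\mathcal T}$ of sections, and double counting gives
\[
|\mathfrak S_{n,m}|\,e_{\mathcal T}(m)=s_{\mathcal T}\,e_{\mathcal T}(n).
\]
Every rich section contains some type-$\mathcal T$ edge of $\mathcal F$ for at least one $\mathcal T$. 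Summing over types,
\[
\beta|\mathfrak S_{n,m}|\le\sum_{\mathcal T}s_{\mathcal T}N_{\mathcal T}(\mathcal F)=|\mathfrak S_{n,m}|\sum_{\mathcal T}e_{\mathcal T}(m)\frac{N_{\mathcal T}(\mathcal F)}{e_{\mathcal T}(n)}.
\]
Since $e_{\mathcal T}(m)\le\max_{\mathcal T'}e_{\mathcal T'}(m)=:E_m$, which depends only on $P$ and $\epsilon$, we may take $c=\beta/E_m$. The consequence $N_P(\mathcal F)=\Omega(n^{x_-}M_n)$ then follows: some type carries at least $c/|\mathfrak T(P)|$ of the weighted sum, and $e_{\mathcal T}(n)=\Theta(n^{t(\mathcal T)}M_n)$ with $t(\mathcal T)\ge x_-(P)$ by Lemma~\ref{gen:degrees}.

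The remaining claim is $\pi(P)<h$, and I would prove it by the hitting-set argument used for $Q_d$. Fix any type $\mathcal T$. If $\mathcal F$ is $P$-free, its complement $\mathcal G$ meets every edge of $\mathcal H_{\mathcal T}(n)$. Therefore
\[
e_{\mathcal T}(n)\le\Delta_1(\mathcal H_{\mathcal T}(n))\,|\mathcal G|.
\]
Lemma~\ref{gen:degrees} gives $e_{\mathcal T}(n)=\Theta(n^tM_n)$ and $\Delta_1=\Theta(n^{t-s_1})=\Theta(n^t)$, so $|\mathcal G|\ge\delta M_n$ for some $\delta=\delta(P)>0$. Hence $\pi(P)\le h-\delta<h$.

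The main obstacle I expect is making the type-preservation and uniform-multiplicity claims rigorous. In particular, the number of sections containing a given edge must depend only on the type, and this follows from transitivity of the symmetric group on edges of a fixed type. I also need each type to be present at level $m$, so that $e_{\mathcal T}(m)>0$ and the double-counting identity is meaningful. Both seem routine given Lemmas~\ref{gen:finite} and~\ref{gen:degrees}.
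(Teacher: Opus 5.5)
Your proposal is correct and follows the paper's proof essentially step for step. The shared steps are the averaging over centred $m$-sections with per-type double counting $|\mathfrak S_{n,m}|e_{\mathcal T}(m)=s_{\mathcal T}e_{\mathcal T}(n)$, the choice $c=\beta/\max_{\mathcal T}e_{\mathcal T}(m)$, and the hitting-set bound $e_{\mathcal T}(n)/\Delta_1(\mathcal H_{\mathcal T}(n))=\Omega_P(M_n)$ giving $\pi(P)<h$. The differences are only cosmetic: you use $\epsilon/8$ margins where the paper uses $\epsilon/4$, and you state explicitly that adjoining $B$ preserves the type, which the paper uses implicitly.
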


\begin{proof}
Choose $m$ so large that every type occurs in $\mathcal W_{m,h}$, that $\operatorname{ex}_P(m)\le(\pi(P)+\epsilon/4)M_m$ and that $(1-\rho_{m,h})(h+\epsilon)\le\epsilon/4$. As in the proof of Proposition~\ref{prop:4.1}, the intersection of $\mathcal F$ with a uniformly random centred $m$-section has expected size at least $(\pi(P)+3\epsilon/4)M_m$. Call a section rich if its intersection with $\mathcal F$ has more than $\operatorname{ex}_P(m)$ members. Since $|\mathcal W_{m,h}|\le hM_m$, the proportion of rich sections is at least $\beta=\epsilon/(2h)$.

Permutations of $[n]$ act transitively on the edges of a fixed type, so every edge of type $\mathcal T$ lies in the same number $b_{\mathcal T}$ of sections, and counting pairs consisting of a section and an edge of type $\mathcal T$ inside it gives $|\mathfrak S_{n,m}|e_{\mathcal T}(m)=b_{\mathcal T}e_{\mathcal T}(n)$. Every rich section contains a copy of $P$ inside $\mathcal F$, of some type. Hence $\sum_{\mathcal T}b_{\mathcal T}N_{\mathcal T}(\mathcal F)\ge\beta|\mathfrak S_{n,m}|$, that is,
\[
\sum_{\mathcal T\in\mathfrak T(P)}e_{\mathcal T}(m)\frac{N_{\mathcal T}(\mathcal F)}{e_{\mathcal T}(n)}\ge\beta.
\]
Since $m$ depends only on $P$ and $\epsilon$, the first assertion follows with $c=\beta/\max_{\mathcal T}e_{\mathcal T}(m)$, and the second follows from Lemma~\ref{gen:degrees}, since $e_{\mathcal T}(n)=\Omega_P(n^{x_-(P)}M_n)$ for every type.

Finally, fix a type $\mathcal T$. The complement of a $P$-free family meets every edge of $\mathcal H_{\mathcal T}(n)$, so it has at least $e_{\mathcal T}(n)/\Delta_1(\mathcal H_{\mathcal T}(n))=\Omega_P(M_n)$ members by Lemma~\ref{gen:degrees}. Hence $\pi(P)<h$.
\end{proof}

Unlike the Boolean case, a rich section need not contain a copy of maximum dimension, so the weighted form is the natural conclusion. It is also the form that the container argument needs.

\begin{proposition}\label{gen:containers}
For every $\eta>0$ there are $C=C(P,\eta)>0$ and $b=b(P,\eta)$ such that, for all sufficiently large $n$, every $P$-free $I\subseteq\mathcal W_{n,h}$ admits an ordered tuple $\mathbf T=(T_1,\dots,T_b)$ and a set $R=R(\mathbf T)$ determined by $\mathbf T$ such that, with $T=\bigcup_{i=1}^bT_i$,
\[
T\subseteq I\subseteq T\cup R,\qquad T\cap R=\emptyset,\qquad
\sum_{i=1}^b|T_i|\le Cn^{-\nu(P)}M_n,\qquad |R|\le(\pi(P)+\eta)M_n.
\]
\end{proposition}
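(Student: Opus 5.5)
We will apply Theorem~\ref{container_theorem} iteratively, one type at a time, because the hypergraphs $\mathcal H_{\mathcal T}(n)$ have different edge densities and codegree profiles and cannot be handled by one uniform $\tau$. Since $P$ is connected, all edges of all types are $r$-uniform, but $e_{\mathcal T}(n)=\Theta(n^{t(\mathcal T)}M_n)$ varies with $\mathcal T$.

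First we fix constants. Proposition~\ref{gen:weighted-ss} applied with $\eta$ in place of $\epsilon$ gives $c>0$ such that every $U\subseteq\mathcal W_{n,h}$ with $|U|\ge(\pi(P)+\eta)M_n$ satisfies $\sum_{\mathcal T}N_{\mathcal T}(U)/e_{\mathcal T}(n)\ge c$. Let $k=|\mathfrak T(P)|$, which is finite by Lemma~\ref{gen:finite}, and put $\xi=\min\{1/4,c/(2k)\}$.

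For each type $\mathcal T$ we set $\tau_{\mathcal T}=Kn^{-\nu_{\mathcal T}}$ and check the codegree condition. Lemma~\ref{gen:degrees} gives average degree $D=\Theta(n^{t})$ and $\Delta_j=O(n^{t-s_j})$, so
\[
\frac{\Delta_j}{\tau_{\mathcal T}^{j-1}D}=O\bigl(K^{-(j-1)}n^{-s_j+\nu_{\mathcal T}(j-1)}\bigr).
\]
By the definition of $\nu_{\mathcal T}$ the exponent of $n$ is at most $0$ for every $2\le j\le r$. Choosing $K$ large, and then $n$ large, makes $\delta(\mathcal H_{\mathcal T}(n),\tau_{\mathcal T})\le\xi/(12r!)$.

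The iteration runs over an enumeration $\mathcal T_1,\dots,\mathcal T_k$ of the types. We start with $C_0=\mathcal W_{n,h}$. At step $i$ we apply Theorem~\ref{container_theorem} to the induced hypergraph $\mathcal H_{\mathcal T_i}(n)[C_{i-1}]$, in which $I$ is still independent. This produces a tuple $\mathbf T^{(i)}$ and a container $C_i\subseteq C_{i-1}$ with $I\subseteq C_i$ and $N_{\mathcal T_i}(C_i)\le\xi e_{\mathcal T_i}(n)$.

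Two technical points arise here. The induced hypergraph has smaller average degree, and this would spoil the codegree condition. If $N_{\mathcal T_i}(C_{i-1})\le\xi e_{\mathcal T_i}(n)$ already, we skip the step and take $C_i=C_{i-1}$ with empty fingerprint. Otherwise the induced hypergraph has at least $\xi e_{\mathcal T_i}(n)$ edges, so its average degree is still $\Omega(n^{t})$ and its codegrees are bounded by those of the full hypergraph, so the estimate above survives with a constant depending on $\xi$. We also apply the container theorem with parameter $\xi$ relative to the induced hypergraph's edge count, which is at most $e_{\mathcal T_i}(n)$, so the output bound still reads $\le\xi e_{\mathcal T_i}(n)$. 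Whether we skip is decided by $C_{i-1}$, which is itself determined by the earlier fingerprints. Hence the whole concatenated tuple $\mathbf T=(\mathbf T^{(1)},\dots,\mathbf T^{(k)})$, padded with empty sets to a fixed length $b$, determines $C_k$.

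After the last step, $\sum_{\mathcal T}N_{\mathcal T}(C_k)/e_{\mathcal T}(n)\le k\xi<c$, so $|C_k|<(\pi(P)+\eta)M_n$. We set $R=C_k\setminus T$, which is determined by $\mathbf T$ and disjoint from $T$, and satisfies $T\subseteq I\subseteq T\cup R$ because $T\subseteq I\subseteq C_k$. The fingerprint sizes total at most $\sum_i b_ia_r\tau_{\mathcal T_i}\,hM_n$. Since each $\nu_{\mathcal T_i}\ge\nu(P)$, this is $O(n^{-\nu(P)}M_n)$, which is the bound in the statement.

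The main obstacle is the uniformity just described: keeping the average-degree lower bound for the induced hypergraphs, so that the codegree condition holds at every step, while ensuring the skip decisions are encoded by $\mathbf T$.
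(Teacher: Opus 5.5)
Your argument is correct, but it is organised differently from the paper's. The paper does not iterate. It applies Theorem~\ref{container_theorem} to each full hypergraph $\mathcal H_{\mathcal T}(n)$ on the whole window, with one common $\tau=Kn^{-\nu(P)}$. It then concatenates the fingerprint tuples and takes $\mathcal C$ to be the intersection of all the containers. Since $\mathcal C$ lies inside every container, $N_{\mathcal T}(\mathcal C)\le\xi e_{\mathcal T}(n)$ holds for every type at once, and weighted supersaturation finishes exactly as in your last step.

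Your opening claim that the types cannot be handled by a uniform $\tau$ is not right. Since $\nu(P)\le\nu_{\mathcal T}$, the exponent $-s_j(\mathcal T)+\nu(P)(j-1)$ is nonpositive for every type, and a larger $\tau$ only makes $\delta(\mathcal H,\tau)$ smaller. In any case, type-dependent $\tau$ would not force an iteration, since the parallel scheme accepts per-type $\tau$ just as well.

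Restricting to $\mathcal H_{\mathcal T_i}(n)[C_{i-1}]$ creates the two technical issues you then resolve. The first is the drop in average degree, which you handle with the skip rule and the bound $D'\ge\xi D$, so that $K$ must depend on $\xi$. The second is recording the skip decisions in $\mathbf T$. Both resolutions are sound: the decision depends only on $C_{i-1}$, and the blocks have fixed length $b(r,\xi)$. But both are artefacts of the iteration.

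The per-type choice $\tau_{\mathcal T}=Kn^{-\nu_{\mathcal T}}$ makes fingerprints smaller for types with larger $\nu_{\mathcal T}$. The total is still of order $n^{-\nu(P)}M_n$, which is all the statement asks for. So the paper's parallel-and-intersect argument gives the same conclusion with less bookkeeping.
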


\begin{proof}
Let $L=|\mathfrak T(P)|$, let $c=c(P,\eta)$ be given by Proposition~\ref{gen:weighted-ss}, let $\xi=\min\{1/4,c/(2L)\}$ and let $\tau=Kn^{-\nu(P)}$ with $K$ a large constant. For every type, Lemma~\ref{gen:degrees} gives
\[
\frac{\Delta_j(\mathcal H_{\mathcal T}(n))}{\tau^{j-1}D_{\mathcal T}(n)}
=O_P\left(K^{-(j-1)}n^{-s_j(\mathcal T)+\nu(P)(j-1)}\right)\qquad(2\le j\le r),
\]
where $D_{\mathcal T}(n)$ is the average degree of $\mathcal H_{\mathcal T}(n)$, and the exponent of $n$ is at most $0$ since $\nu(P)\le\nu_{\mathcal T}$. As there are finitely many types and values of $j$, choosing $K$ large and then $n$ large makes the hypothesis of Theorem~\ref{container_theorem} hold for every $\mathcal H_{\mathcal T}(n)$, with $\tau<1/2$.

A $P$-free family is independent in every $\mathcal H_{\mathcal T}(n)$. Apply Theorem~\ref{container_theorem} to each of these hypergraphs, concatenate the resulting tuples into one ordered tuple $\mathbf T$ of bounded length, and let $\mathcal C$ be the intersection of the resulting containers. Then $I\subseteq\mathcal C$ and $N_{\mathcal T}(\mathcal C)\le\xi e_{\mathcal T}(n)$ for every type. If $|\mathcal C|\ge(\pi(P)+\eta)M_n$, then Proposition~\ref{gen:weighted-ss} gives $\sum_{\mathcal T}N_{\mathcal T}(\mathcal C)/e_{\mathcal T}(n)\ge c>L\xi$, a contradiction. Every $T_i$ is contained in $I$, and the total size of the tuple is $O_{P,\eta}(\tau|\mathcal W_{n,h}|)=O_{P,\eta}(n^{-\nu(P)}M_n)$. Let $R=\mathcal C\setminus T$.
\end{proof}

\begin{theorem}\label{gen:connected-main}
Let $P$ be a connected graded poset of height $h\ge2$ with $\mathfrak T(P)\ne\emptyset$. Then $\operatorname{forb}_P(n)=2^{(\pi(P)+o(1))M_n}$. Moreover, if $X=\mathcal W_{n,h}(p)$ is obtained by retaining every member of $\mathcal W_{n,h}$ independently with probability $p$, then the following hold.
\begin{enumerate}
\item[\textup{(i)}] If $pn^{\nu(P)}\to\infty$, then with high probability $\alpha_P(X)=(\pi(P)+o(1))pM_n$.
\item[\textup{(ii)}] If $pn^{\nu_+(P)}\to0$ and $pM_n\to\infty$, then with high probability $\alpha_P(X)=(h+o(1))pM_n$.
\end{enumerate}
\end{theorem}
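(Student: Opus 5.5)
The plan mirrors the Boolean arguments of Sections~\ref{sec6} and~\ref{sec8}, with Proposition~\ref{gen:containers} in place of Proposition~\ref{prop:Q_d containers}.

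\emph{Enumeration.} For the lower bound, take an extremal $P$-free family of size $\operatorname{ex}_P(n)=(\pi(P)+o(1))M_n$; every subfamily of it is $P$-free. For the upper bound, fix $\eta>0$ and apply Proposition~\ref{gen:containers} with $\eta/2$. Every $P$-free family is determined by a tuple $\mathbf T$ together with a subset of $T\cup R(\mathbf T)$. By the counting bound of Section~\ref{sec6}, the number of tuples with union of size at most $s_0=\lfloor Cn^{-\nu(P)}M_n\rfloor$ is at most $\sum_{t\le s_0}\binom{v_n}{t}(2^b-1)^t=2^{O(s_0\log(v_n/s_0))}=2^{o(M_n)}$. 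Here we use $\nu(P)>0$ and $v_n=\Theta(M_n)$, so that $s_0\log(v_n/s_0)=O(n^{-\nu(P)}\log n\cdot M_n)$. Each tuple contributes at most $2^{(\pi(P)+\eta/2)M_n+s_0}$ families, and letting $\eta\to0$ finishes this part.

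\emph{Part (i).} The lower bound follows from the Chernoff bound applied to $X\cap\mathcal F_n$, where $\mathcal F_n$ is extremal. For the upper bound, fix $\eta>0$ and take containers with $\eta/4$. Set $\tau=n^{-\nu(P)}$, so that $p/\tau\to\infty$ and the fingerprints have total size $O(\tau M_n)=o(pM_n)$. Suppose $I\subseteq X$ is $P$-free with $|I|\ge(\pi(P)+\eta)pM_n$. Then $T\subseteq X$ and $|X\cap R(\mathbf T)|\ge(\pi(P)+3\eta/4)pM_n$. Since $R$ is disjoint from $T$ and determined by $\mathbf T$, these two events are independent. The first has probability $p^{|T|}$, and the second, by Chernoff, has probability $\exp(-c\,pM_n)$. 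A union bound over tuples, using Lemma~\ref{lem:weighted_fingerprint} with $A=2^b-1$, gives total probability $\exp(-c\,pM_n+o(pM_n))=o(1)$.

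\emph{Part (ii).} This part needs the most care: a crude deletion of all copies fails, because types with small $\nu_{\mathcal T}$ may have expected counts far exceeding $pM_n$. Deleting one vertex per copy is therefore too costly, and we use a deletion adapted to each type.

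For each type $\mathcal T$, choose $\mathcal J_{\mathcal T}\subseteq\mathcal T$ with $|\mathcal J_{\mathcal T}|=j\ge2$ attaining $\nu_{\mathcal T}=s_j(\mathcal T)/(j-1)$. Call a $j$-set of $X$ \emph{bad} if it is the image of $\mathcal J_{\mathcal T}$ inside some copy of type $\mathcal T$ in $\mathcal W_{n,h}$. Every copy of type $\mathcal T$ contained in $X$ contains a bad $j$-set in $X$. Deleting one element from each bad set in $X$, over all types, therefore leaves a $P$-free family.

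The key estimate is the count of bad configurations. The number of possible images of $\mathcal J_{\mathcal T}$ is $O(M_n n^{s_j(\mathcal T)})$: choose the smallest member, giving $O(M_n)$ options, then the $s_j$ varying coordinates, giving $O(n^{s_j})$ options, and boundedly many labellings. So the expected number of bad sets in $X$ is
\[
O\bigl(p^jn^{s_j}M_n\bigr)=pM_n\cdot O\bigl((pn^{\nu_{\mathcal T}})^{j-1}\bigr)=o(pM_n),
\]
because $\nu_{\mathcal T}\le\nu_+(P)$ and $pn^{\nu_+(P)}\to0$. Summing over the finitely many types and applying Markov's inequality, together with $|X|=(h+o(1))pM_n$ by Chernoff (using $pM_n\to\infty$), gives $\alpha_P(X)\ge(h-o(1))pM_n$. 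The bound $\alpha_P(X)\le|X|$ is trivial.

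The only point to verify is that the image count above uses only the varying coordinates of $\mathcal J$ and not all $t(\mathcal T)$ coordinates. This holds because the image family of $\mathcal J$ is determined by its intersection and union.
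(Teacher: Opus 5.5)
Your proposal is correct and follows the paper's proof essentially step for step. The enumeration uses Proposition~\ref{gen:containers} with the tuple count, (i) uses the fingerprint union bound with Lemma~\ref{lem:weighted_fingerprint}, and (ii) deletes one set from each image in $X$ of a subfamily $\mathcal J_{\mathcal T}$ attaining $\nu_{\mathcal T}$. One small correction to your motivating remark: deleting whole copies fails not because of types with small $\nu_{\mathcal T}$, but because $t(\mathcal T)/(r-1)$ can exceed $\nu_+(P)$, so that approach only covers the regime $pn^{\kappa(P)}\to0$.
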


\begin{proof}
An extremal $P$-free family and its subfamilies give the lower bound in the counting formula. For the upper bound, apply Proposition~\ref{gen:containers} with $\eta/2$ in place of $\eta$. With $v=|\mathcal W_{n,h}|$, $A=2^b-1$ and $s=O_{P,\eta}(n^{-\nu(P)}M_n)$, the number of possible tuples is at most $\sum_{u\le s}\binom vuA^u=2^{O_{P,\eta}(s\log(v/s))}=2^{o(M_n)}$, and the independent sets with a given tuple lie in a set of size at most $(\pi(P)+\eta/2)M_n+s$. Letting $\eta\to0$ proves the formula.

For (i), the intersection of $X$ with an extremal family gives the lower bound by the Chernoff bound. For the upper bound, fix $\eta>0$, apply Proposition~\ref{gen:containers} with $\eta/4$ in place of $\eta$, and let $\tau=n^{-\nu(P)}$. The proof of Proposition~\ref{prop:dense_random} applies word for word: an independent set of size at least $(\pi(P)+\eta)pM_n$ has fingerprint union $T$ of size $o(pM_n)$ and hence $|X\cap R(\mathbf T)|\ge(\pi(P)+3\eta/4)pM_n$; for an admissible tuple, $R(\mathbf T)$ is determined, disjoint from $T$ and of size at most $(\pi(P)+\eta/4)M_n$, so conditional on $T\subseteq X$ the Chernoff bound gives probability at most $\exp(-c_{P,\eta}pM_n)$; and the union bound over admissible tuples together with Lemma~\ref{lem:weighted_fingerprint} gives
\[
\mathbb P\big(\alpha_P(X)\ge(\pi(P)+\eta)pM_n\big)\le\exp(-c_{P,\eta}pM_n)\sum_{u\le C\tau M_n}\binom vu(Ap)^u=o(1).
\]

For (ii), for every type $\mathcal T$ choose a subfamily $\mathcal J_{\mathcal T}\subseteq\mathcal T$ with $j_{\mathcal T}\ge2$ members and $|\bigcup\mathcal J_{\mathcal T}\setminus\bigcap\mathcal J_{\mathcal T}|=s_{\mathcal T}$ such that $s_{\mathcal T}/(j_{\mathcal T}-1)=\nu_{\mathcal T}$. An image of $\mathcal J_{\mathcal T}$ in $\mathcal W_{n,h}$, in the levels prescribed by its position in $\mathcal T$, is determined by its common intersection, which has size $k_{n,h}-q(\mathcal T)+|\bigcap\mathcal J_{\mathcal T}|$, and by its $s_{\mathcal T}$ varying coordinates, so there are $O_P(n^{s_{\mathcal T}}M_n)$ such images. The expected number of them inside $X$ is therefore
\[
O_P(p^{j_{\mathcal T}}n^{s_{\mathcal T}}M_n)=pM_n\cdot O_P\big((pn^{\nu_{\mathcal T}})^{j_{\mathcal T}-1}\big)=o(pM_n),
\]
since $\nu_{\mathcal T}\le\nu_+(P)$. Summing over the finitely many types and applying Markov's inequality, with high probability we can delete one set from every image of every $\mathcal J_{\mathcal T}$ inside $X$ by removing $o(pM_n)$ sets. Every copy of $P$ of type $\mathcal T$ contains an image of $\mathcal J_{\mathcal T}$, so the remaining family is $P$-free. Since $|X|=(h+o(1))pM_n$ with high probability, (ii) follows.
\end{proof}

The same results hold for induced copies: keep only the types whose families induce $P$, define $\pi_{\mathrm{ind}}(P)$ by induced-$P$-free families, and take the minimum and maximum of $\nu_{\mathcal T}$ over the remaining types. The proofs apply without change, and an empty set of induced types again means that the whole window is free. The reduction to components in Section~\ref{sec:examples} concerns weak copies only.

The two regimes in Theorem~\ref{gen:connected-main} meet when $\nu(P)=\nu_+(P)$, and then $n^{-\nu(P)}$ is the threshold order, since $\pi(P)<h$. This holds in particular whenever $P$ has a single type. Deleting one set from every whole copy of $P$, as in Proposition~\ref{prop:sparse_regime}, gives only the sparse regime $pn^{\kappa(P)}\to0$, because the number of copies is $\Theta_P(n^{x_+(P)}M_n)$; one checks that $\nu(P)=\kappa(P)$ if and only if all types have the same dimension $t$ and $s_j(\mathcal T)\ge t(j-1)/(r-1)$ for every type and every $2\le j\le r$. Deleting subfamilies instead of whole copies is what gives matching exponents for every single-type poset, whether or not this condition holds. When $\nu(P)<\nu_+(P)$, the theorem gives two sufficient conditions and says nothing about the range in between, and the exponent $\nu(P)$ need not be the truth: it is only what the codegree bounds of Lemma~\ref{gen:degrees} give. Section~\ref{sec:examples} contains a tree poset for which $\nu(P)<\nu_+(P)$ while the results of \cite{Jiang2025Tree} show that the dense regime extends down to $pn\to\infty$.

\section{Disconnected posets, examples and the parameter $\mu(P)$}\label{sec:examples}

We first remove the connectedness assumption. Let $P$ be a graded poset of height $h\ge2$ with connected components $P_1,\dots,P_c$, and let $r=|P|$. Each component is graded of height $h$.

\begin{lemma}\label{gen:components}
For every finite family $X$,
\[
\max_{1\le i\le c}\alpha_{P_i}(X)\le\alpha_P(X)\le\max_{1\le i\le c}\alpha_{P_i}(X)+r,
\]
and, writing $\operatorname{forb}_{P_i}(X)$ for the number of $P_i$-free subfamilies of $X$,
\[
\operatorname{forb}_P(X)\le\Big(\sum_{j=0}^{r}\binom{|X|}{j}\Big)\sum_{i=1}^{c}\operatorname{forb}_{P_i}(X).
\]
\end{lemma}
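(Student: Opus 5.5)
The plan is to prove the two inequalities for $\alpha$ first and then obtain the counting bound from the same structural fact. For the lower bound, note that a copy of $P$ contains a copy of every component $P_i$. Indeed, restricting an injective order-preserving map $P\to X$ to $P_i$ gives an injective order-preserving map $P_i\to X$. Hence every $P_i$-free subfamily of $X$ is $P$-free, and $\alpha_{P_i}(X)\le\alpha_P(X)$ for each $i$.

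For the upper bound, let $\mathcal F\subseteq X$ be $P$-free. I will show that some component is nearly excluded: there is an index $i$ and a set $\mathcal D\subseteq\mathcal F$ with $|\mathcal D|\le r$ such that $\mathcal F\setminus\mathcal D$ is $P_i$-free. I would prove this greedily. Pick disjoint copies of $P_1,P_2,\dots,P_c$ inside $\mathcal F$, one at a time. Suppose we have already embedded $P_1,\dots,P_{i-1}$ with images $\mathcal D_{i-1}=\varphi(P_1)\cup\dots\cup\varphi(P_{i-1})$, so $|\mathcal D_{i-1}|\le r$. If $\mathcal F\setminus\mathcal D_{i-1}$ contains a copy of $P_i$, add it and continue. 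If every $P_i$ could be added this way, the union of the maps is an injective map from $P$ into $\mathcal F$. It is order-preserving because $P$ has no comparabilities between different components: the comparabilities of $P$ are generated by the Hasse diagram, so incomparable components impose nothing. That contradicts $P$-freeness. Therefore for some $i$ the family $\mathcal F\setminus\mathcal D_{i-1}$ is $P_i$-free, with $|\mathcal D_{i-1}|\le r-|P_i|\le r$. This gives
\[
|\mathcal F|\le\alpha_{P_i}(X)+r\le\max_i\alpha_{P_i}(X)+r.
\]

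For the counting bound, apply the same argument to each $P$-free $\mathcal F$. It shows that $\mathcal F$ is determined by three pieces of data: the index $i$, the set $\mathcal D=\mathcal F\cap\mathcal D_{i-1}$ of size at most $r$, and the $P_i$-free family $\mathcal F\setminus\mathcal D$. So $\mathcal F\mapsto(\mathcal D,i,\mathcal F\setminus\mathcal D)$ is injective, since $\mathcal F$ is recovered as the union. The number of choices of $\mathcal D$ is at most $\sum_{j\le r}\binom{|X|}j$, and for each $i$ there are at most $\operatorname{forb}_{P_i}(X)$ choices for the rest. Summing over $i$ gives the stated inequality.

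The only point needing care is that the greedy union is order-preserving and injective. Injectivity holds because each new copy avoids the previous images. Order preservation needs that $x\le y$ in $P$ forces $x,y$ to lie in the same component. This holds because any comparable pair is joined by a chain of cover relations, which lies in the Hasse diagram. I expect no further obstacle.
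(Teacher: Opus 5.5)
Your proposal is correct and is essentially the paper's proof: you get the lower bound by restricting an embedding to a component, and you choose disjoint copies of $P_1,\dots,P_c$ greedily, which must fail at some $P_i$ after fewer than $r$ members have been removed. This writes every $P$-free family as at most $r$ members of $X$ plus a $P_i$-free family, which gives both the upper bound and the counting bound; the only cosmetic point is that $\mathcal F\mapsto(\mathcal D,i,\mathcal F\setminus\mathcal D)$ is well defined once you fix one run of the greedy procedure for each $\mathcal F$.
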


\begin{proof}
Every $P_i$-free family is $P$-free, which gives the lower bound. Let $I\subseteq X$ be $P$-free. Choose copies of $P_1,\dots,P_c$ in $I$ one after another, deleting the members of each chosen copy before choosing the next. If every choice succeeds, the union of the chosen copies is a weak copy of $P$, since comparabilities between members of different components are allowed in a weak copy. Hence the procedure fails at some $P_i$, and at that moment fewer than $r$ members have been deleted and the remaining family is $P_i$-free. This proves the upper bound, and it also writes every $P$-free family as the union of at most $r$ members of $X$ and a $P_i$-free family for some $i$, which gives the counting inequality.
\end{proof}

\begin{corollary}\label{gen:disconnected}
Let $P$ be a graded poset of height $h\ge2$ with components $P_1,\dots,P_c$. Then $\pi(P)=\max_i\pi(P_i)$ and $\operatorname{forb}_P(n)=2^{(\pi(P)+o(1))M_n}$. If some $\mathfrak T(P_i)$ is empty, then no height window contains $P$ and every subfamily is $P$-free. Otherwise let $\nu_*(P)=\min_i\nu(P_i)$ and $\nu_{+,*}(P)=\min_i\nu_+(P_i)$. For every $\epsilon>0$, every $\mathcal F\subseteq\mathcal W_{n,h}$ with $|\mathcal F|\ge(\pi(P)+\epsilon)M_n$ contains $\Omega_{P,\epsilon}(M_n^{c}n^{\sum_ix_-(P_i)})$ copies of $P$ for large $n$, and for $X=\mathcal W_{n,h}(p)$ the following hold.
\begin{enumerate}
\item[\textup{(i)}] If $pn^{\nu_*(P)}\to\infty$, then with high probability $\alpha_P(X)=(\pi(P)+o(1))pM_n$.
\item[\textup{(ii)}] If $pn^{\nu_{+,*}(P)}\to0$ and $pM_n\to\infty$, then with high probability $\alpha_P(X)=(h+o(1))pM_n$.
\end{enumerate}
\end{corollary}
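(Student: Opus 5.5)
The plan is to reduce everything to the components via Lemma~\ref{gen:components} and to apply the connected results (Proposition~\ref{gen:limit}, Proposition~\ref{gen:weighted-ss}, Theorem~\ref{gen:connected-main}) to each $P_i$.

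\textbf{Extremal number and counting.} Take $X=\mathcal W_{n,h}$ in Lemma~\ref{gen:components}. This gives $\max_i\operatorname{ex}_{P_i}(n)\le\operatorname{ex}_P(n)\le\max_i\operatorname{ex}_{P_i}(n)+r$. Divide by $M_n$ and use Proposition~\ref{gen:limit} for each $P_i$; the result is $\pi(P)=\max_i\pi(P_i)$. The counting inequality in Lemma~\ref{gen:components} has prefactor $\sum_{j\le r}\binom{|X|}{j}=2^{O(\log M_n)}$ and a sum of $c$ terms. Each term is $2^{(\pi(P_i)+o(1))M_n}$, either by Theorem~\ref{gen:connected-main} or, when $\mathfrak T(P_i)=\emptyset$, trivially equal to $2^{|\mathcal W_{n,h}|}$ with $\pi(P_i)=h$. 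Together with the lower bound $2^{\operatorname{ex}_P(n)}$, this gives the counting formula.

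\textbf{Empty type set.} If some $\mathfrak T(P_i)=\emptyset$, then $P_i$ has no copy in any window. A copy of $P$ in $\mathcal W_{n,h}$ would restrict to a copy of $P_i$, since the restriction is still injective and order-preserving. So every subfamily is $P$-free.

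\textbf{Supersaturation.} Let $|\mathcal F|\ge(\pi(P)+\epsilon)M_n$. Then $|\mathcal F|\ge(\pi(P_i)+\epsilon)M_n$ for every $i$. I build copies of $P$ greedily. Choose a copy of $P_1$, delete its members, choose a copy of $P_2$ in the remainder, and so on. After removing fewer than $r$ sets the remainder still has size at least $(\pi(P_i)+\epsilon/2)M_n$. By Proposition~\ref{gen:weighted-ss} it therefore contains $\Omega(n^{x_-(P_i)}M_n)$ copies of $P_i$; for the bound I use that every type of $P_i$ has $t\ge x_-(P_i)$, and the removed sets lie in only a lower-order number of copies.

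The step I expect to be hardest is the multiplicity of this count. The number of choice sequences is at least $\prod_i\Omega(n^{x_-(P_i)}M_n)$, but a single copy of $P$ arises from boundedly many sequences. It can decompose as a union of copies of $P_1,\dots,P_c$ in at most $\binom{r}{|P_1|,\dots,|P_c|}$ ways. Dividing by this constant gives $\Omega(M_n^c n^{\sum_i x_-(P_i)})$ copies of $P$.

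\textbf{Random regimes.} For (i), the lower bound intersects $X$ with an extremal $P$-free family and applies Chernoff. For the upper bound, Lemma~\ref{gen:components} gives $\alpha_P(X)\le\max_i\alpha_{P_i}(X)+r$. For each $i$, $pn^{\nu_*(P)}\to\infty$ implies $pn^{\nu(P_i)}\to\infty$, so Theorem~\ref{gen:connected-main}(i) gives $\alpha_{P_i}(X)\le(\pi(P_i)+o(1))pM_n$ with high probability. Since $pM_n\to\infty$, the additive $r$ is negligible. For (ii), choose $i$ with $\nu_+(P_i)=\nu_{+,*}(P)$; then $pn^{\nu_+(P_i)}\to0$. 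Theorem~\ref{gen:connected-main}(ii) gives $\alpha_{P_i}(X)\ge(h-o(1))pM_n$, and $\alpha_P(X)\ge\alpha_{P_i}(X)$. The upper bound is $|X|=(h+o(1))pM_n$.
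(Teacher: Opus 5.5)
Your proposal is correct and follows the paper's proof essentially step by step. You use the sandwich from Lemma~\ref{gen:components} for $\pi(P)$, for the counting formula and for part (i), pick a component minimising $\nu_+$ for part (ii), and build pairwise disjoint copies of $P_1,\dots,P_c$ greedily for supersaturation. The only cosmetic difference is in that last step: you re-apply Proposition~\ref{gen:weighted-ss} to the remainder after deleting fewer than $r$ sets, while the paper applies it once to $\mathcal F$ and uses the $\Delta_1$ bound of Lemma~\ref{gen:degrees} to discard copies meeting the deleted sets. Both work, and your multinomial bound makes explicit the paper's ``boundedly many times''.
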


\begin{proof}
The formula for $\pi(P)$ follows from Lemma~\ref{gen:components} after dividing by $M_n$. In the counting inequality the first factor has logarithm $O_P(\log|\mathcal W_{n,h}|)=O_P(n)=o(M_n)$, so Theorem~\ref{gen:connected-main}, or the exact formula for a component with no type, gives the upper bound on $\operatorname{forb}_P(n)$, and an extremal family gives the lower bound.

For supersaturation, apply Proposition~\ref{gen:weighted-ss} to each component, using $\pi(P)\ge\pi(P_i)$: for every $i$ there is a type $\mathcal T_i$ of $P_i$ with $N_{\mathcal T_i}(\mathcal F)=\Omega_{P,\epsilon}(n^{t(\mathcal T_i)}M_n)$. By the bound on $\Delta_1$ in Lemma~\ref{gen:degrees}, at most $O_P(n^{t(\mathcal T_i)})$ copies of type $\mathcal T_i$ meet a fixed set of at most $r$ members, so we can choose copies of $P_1,\dots,P_c$ in $\mathcal F$ one after another, pairwise disjoint, in $\Omega_{P,\epsilon}(M_n^cn^{\sum_it(\mathcal T_i)})$ ways. Their union is a weak copy of $P$, every such copy arises boundedly many times, and $t(\mathcal T_i)\ge x_-(P_i)$.

In (i), the hypothesis gives $pn^{\nu(P_i)}\to\infty$ for every $i$. Apply Theorem~\ref{gen:connected-main} to every component and then Lemma~\ref{gen:components}, whose additive constant is $o(pM_n)$. In (ii), choose $i$ with $\nu_+(P_i)=\nu_{+,*}(P)$. Theorem~\ref{gen:connected-main} gives a $P_i$-free, hence $P$-free, subfamily of $X$ of size $(h+o(1))pM_n$, and $|X|=(h+o(1))pM_n$ gives the matching upper bound.
\end{proof}

A graded poset of height one is an antichain. If it has $r$ elements, then its weak copies are the $r$-element subfamilies, so with $v_n=|\mathcal W_{n,1}|=M_n$ we have $\operatorname{ex}_P(n)=\min\{r-1,v_n\}$, $\operatorname{forb}_P(n)=\sum_{j=0}^{r-1}\binom{v_n}{j}$ and $\alpha_P(X)=\min\{r-1,|X|\}$. Thus $\pi(P)=0$ and $\operatorname{forb}_P(n)=2^{o(M_n)}$.

We now compute the parameters of Section~\ref{sec:general} for some posets. If $P$ has a single type $\mathcal T$, with $t=t(\mathcal T)$, then
\[
x_-(P)=x_+(P)=t,\qquad\Delta_j(\mathcal H_P)=\Theta_P(n^{t-s_j(\mathcal T)}),\qquad
\nu(P)=\nu_+(P)=\min_{2\le j\le r}\frac{s_j(\mathcal T)}{j-1}.
\]
Let $C_h$ be the chain with $h$ elements, let $V_s$ be the poset with one minimal element below $s$ pairwise incomparable maximal elements, let $O_6$ be the crown with six elements, whose Hasse diagram is a cycle of length six, and let $Q_2^+$ be obtained from $Q_2$ by adding a new element above all elements of $Q_2$.

\begin{center}
\begin{tabular}{ccccc}
\toprule
$P$ & $h$ & $x_-(P)=x_+(P)$ & $\nu(P)=\nu_+(P)$ & $\kappa(P)$\\ \midrule
$C_h$, $h\ge2$ & $h$ & $h-1$ & $1$ & $1$\\
$V_s$, $s\ge1$ & $2$ & $s$ & $1$ & $1$\\
$O_6$ & $2$ & $3$ & $3/5$ & $3/5$\\
$Q_d$ & $d+1$ & $d$ & $d/(2^d-1)$ & $d/(2^d-1)$\\
$Q_2^+$ & $4$ & $3$ & $2/3$ & $3/4$\\
\bottomrule
\end{tabular}
\end{center}

\begin{proposition}\label{examples:values}
The values in the table are correct. Moreover, $\pi(C_h)=h-1$ and $\pi(V_s)=1$.
\end{proposition}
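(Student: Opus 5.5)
The plan is to check each row of the table by first determining all types of copies in the height window, and then computing $s_j(\mathcal T)$ for that type. In every example we expect exactly one type. By Lemma~\ref{gen:rank} every copy preserves ranks and every cover relation becomes a one-element extension, and this fixes the type. Once the type is known, $x_\pm=t(\mathcal T)$, $\nu=\nu_+=\min_j s_j/(j-1)$ and $\kappa=t/(r-1)$ follow from the displayed single-type formulas. Only the lower bound $s_j\ge$ (claimed value) needs an argument; the matching upper bound is supplied by an explicit subfamily.

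\emph{Chains.} A copy of $C_h$ is a saturated chain through the $h$ levels, so the type is $\{\emptyset,\{1\},\{1,2\},\dots,[h-1]\}$ with $t=r-1=h-1$. Any $j$ members of a chain have $j$ distinct sizes, so at least $j-1$ coordinates vary, and $j$ consecutive members attain this. Hence $s_j=j-1$ and $\nu=\kappa=1$.

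\emph{$V_s$.} A copy is $A,A\cup\{x_1\},\dots,A\cup\{x_s\}$, with type $\{\emptyset,\{1\},\dots,\{s\}\}$, $t=s$ and $r=s+1$. A $j$-subfamily containing $\emptyset$ has $j-1$ varying coordinates, and one avoiding $\emptyset$ has $j$. So $s_j=j-1$ and $\nu=\kappa=1$.

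\emph{$O_6$.} The three maximal sets are unions of pairs of minimal sets, so the three minimal $k$-sets must pairwise meet in $k-1$ elements. They therefore form either a sunflower with a $(k-1)$-core or three subsets of one $(k+1)$-set. The second option makes all pairwise unions equal, which is impossible, so the only type is $\{1\},\{2\},\{3\},\{1,2\},\{1,3\},\{2,3\}$ with $t=3$ and $r=6$. Since a subfamily with $2$ varying coordinates has at most $3$ members, we get $s_2=1$, $s_3=2$ and $s_4=s_5=s_6=3$. The ratios are $1,1,1,3/4,3/5$, so $\nu=\kappa=3/5$.

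\emph{$Q_d$.} This row was already recorded after the definition of $\nu$.

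\emph{$Q_2^+$.} The top element covers only the top of the diamond, so the type is $\{\emptyset,\{1\},\{2\},\{1,2\},\{1,2,3\}\}$ with $t=3$ and $r=5$. We get $s_2=1$, $s_3=2$ (three members need two varying coordinates), $s_4=2$ (attained by the diamond) and $s_5=3$. The ratios are $1,1,2/3,3/4$, so $\nu=2/3$ and $\kappa=3/4$.

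For $\pi(C_h)=h-1$, the lower bound is Proposition~\ref{gen:limit}. For the upper bound we use the Lubell averaging argument. Every maximal chain of $2^{[n]}$ meets each level of $\mathcal W_{n,h}$ exactly once. A $C_h$-free family therefore meets each maximal chain in at most $h-1$ sets, so $\sum_i|\mathcal F\cap\text{level}_i|/\binom n{k_{n,h}+i}\le h-1$, which gives $|\mathcal F|\le(h-1)M_n$.

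For $\pi(V_s)=1$, one level gives the lower bound. For the upper bound let $L$ and $U$ be the lower-level and upper-level parts of $\mathcal F$. Each member of $L$ lies in at most $s-1$ members of $U$. We double count the containment pairs $(B',B)$ with $B\in U$ and $B'$ a $k$-subset of $B$. On one side they number $|U|(k+1)$. On the other side there are at most $(s-1)|L|$ pairs with $B'\in L$, and at most $(n-k)(M_n-|L|)$ pairs with $B'\notin L$. Since $k+1$ and $n-k$ are both $n/2+O(1)$, this yields $|U|\le(1+o(1))(M_n-|L|)+O(|L|/n)$, and hence $|L|+|U|\le(1+o(1))M_n$.

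The main obstacle is the type classification, in particular for $O_6$. There one must make sure that no degenerate configuration, such as the three minimal sets lying in a common $(k+1)$-set, yields a second type. The remaining computations are routine.
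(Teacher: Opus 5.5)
Your proposal is correct and follows the paper's proof essentially step for step. It uses the same single-type classification via Lemma~\ref{gen:rank} (including the sunflower-versus-common-$(k+1)$-set dichotomy for $O_6$) and the same computation of $s_j$, with lower bounds coming from ``$j$ distinct sets need $\lceil\log_2 j\rceil$ varying coordinates'' and ``two consecutive levels of a $2$-dimensional interval contain at most three sets''. It also uses the same Lubell argument for $\pi(C_h)$ and the same double count of $k$-subsets for $\pi(V_s)$. The only step you leave implicit is that, for the crown, ``two varying coordinates allow at most three members'' relies on the type lying in two consecutive levels, which is exactly the justification the paper gives.
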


\begin{proof}
A copy of $C_h$ in $\mathcal W_{n,h}$ meets every level, so after removing the common intersection its type is a maximal chain of $2^{[h-1]}$, which is the only type. Any $j$ of its members have $|\bigcup\mathcal J\setminus\bigcap\mathcal J|\ge j-1$, with equality for $j$ consecutive members, so $s_j=j-1$ and the first row follows. Taking $h-1$ full levels gives $\pi(C_h)\ge h-1$. Conversely, a $C_h$-free family $\mathcal F$ has at most $h-1$ members on a uniformly random maximal chain of $2^{[n]}$, so $\sum_{A\in\mathcal F}\binom{n}{|A|}^{-1}\le h-1$ and $|\mathcal F|\le(h-1)M_n$.

In a copy of $V_s$ in two consecutive levels, every maximal member is the minimal member plus one coordinate, and these coordinates are distinct, so the only type is $\{\emptyset,\{1\},\dots,\{s\}\}$. For $j\ge2$, a subfamily containing the minimal member and $j-1$ maximal members has $|\bigcup\mathcal J\setminus\bigcap\mathcal J|=j-1$, and one consisting of maximal members only has $j$. This gives the second row. For $\pi(V_s)$, write the two levels as $k$ and $k+1$, let $\mathcal F$ be $V_s$-free, and let $a$ and $b$ be the numbers of members of $\mathcal F$ in the two levels. Every member of $\mathcal F$ in level $k$ has at most $s-1$ supersets in $\mathcal F$, so counting the subsets of members of $\mathcal F$ in level $k+1$ gives $(k+1)b\le(s-1)a+(n-k)(\binom nk-a)$. Since $k=n/2+O(1)$, this gives $a+b\le(1+O_s(n^{-1}))M_n$, and one full level shows that $\pi(V_s)=1$.

For the crown, let $A_1,A_2,A_3$ be the three members in level $k$; the three members in level $k+1$ are their pairwise unions, which must be distinct. Write $A_1=C\cup\{a\}$ and $A_2=C\cup\{b\}$. Since $|A_2\cup A_3|=|A_3\cup A_1|=k+1$, either $A_3=C\cup\{c\}$ with $c\notin\{a,b\}$, or $A_3\subseteq C\cup\{a,b\}$; in the second case the three unions coincide. Hence the only type is $\{\{1\},\{2\},\{3\},\{1,2\},\{1,3\},\{2,3\}\}$, with $(s_2,\dots,s_6)=(1,2,3,3,3)$: the values $s_2=1$ and $s_3=2$ are attained by a comparable pair and by $\{1\},\{2\},\{1,2\}$, and $s_4=3$ because two consecutive levels of an interval of dimension two contain at most three sets. This gives the third row.

For $Q_d$, Lemma~\ref{lem:2.1} shows that the only type is $2^{[d]}$, and $s_j=\lceil\log_2j\rceil$ by the proof of Lemma~\ref{lem:5.1}; Lemma~\ref{lem:5.2} gives $\nu(Q_d)=d/(2^d-1)$.

Finally, the four lower members of a copy of $Q_2^+$ lie in the first three levels and form a canonical diamond by Lemma~\ref{lem:2.1}, and the maximal member adds one coordinate. The only type is $\{\emptyset,\{1\},\{2\},\{1,2\},\{1,2,3\}\}$, with $(s_2,s_3,s_4,s_5)=(1,2,2,3)$, so $\nu(Q_2^+)=2/3<3/4=\kappa(Q_2^+)$.
\end{proof}

The crown is a non-Boolean example with matching exponents: in two middle levels its threshold order is $n^{-3/5}$, with coefficient $\pi(O_6)$ above the threshold and $2$ below it. The last row shows the gain from deleting subfamilies rather than whole copies in the sparse regime. Deleting one set from every diamond in the first three levels of $\mathcal W_{n,4}(p)$ leaves a $Q_2^+$-free family, and the expected number of such diamonds is $O(p^4n^2M_n)=o(pM_n)$ when $pn^{2/3}\to0$, so the threshold order for $Q_2^+$ is $n^{-2/3}$ although $\kappa(Q_2^+)=3/4$.

Some graded posets have no copy in their height window. Let $D_s$ be the poset with one minimal element, $s$ pairwise incomparable middle elements and one maximal element, so that $D_1=C_3$ and $D_2=Q_2$. For $s\ge3$ there is no weak copy of $D_s$ in three consecutive levels, since the minimal and maximal members differ in exactly two coordinates and so have only two sets strictly between them. Hence $\pi(D_s)=3$ and every subfamily of $\mathcal W_{n,3}$ is $D_s$-free. Similarly the butterfly, with two minimal elements below two maximal elements, has no weak copy in two consecutive levels: the union of two distinct sets in level $k$ has at least $k+1$ elements, so the two sets in level $k+1$ would coincide. Its window density is $2$. In these cases the enumeration and random statements are identities, and no exponent is defined.

The following tree poset has two types. Let $R$ be the poset with minimal elements $a,b,e$, maximal elements $c,d$ and relations $a<c$, $b<c$, $b<d$, $e<d$; its Hasse diagram is a path on five vertices. Let $B$ be the image of $b$ and $B\cup\{x\}$, $B\cup\{y\}$ the images of $c$ and $d$, where $x\ne y$. The images of $a$ and $e$ are $(B\cup\{x\})\setminus\{u\}$ and $(B\cup\{y\})\setminus\{v\}$ with $u,v\in B$, and they are distinct from $B$ and from each other. If $u=v$ the type is
\[
\mathcal T_3=\{\{1\},\{2\},\{3\},\{1,2\},\{2,3\}\},
\]
and if $u\ne v$ it is
\[
\mathcal T_4=\{\{1,2\},\{2,3\},\{1,4\},\{1,2,3\},\{1,2,4\}\},
\]
where the subscript is the dimension. No further comparabilities occur, since two distinct sets in level $k$ have at most one common superset in level $k+1$. The values of $s_j$ are
\[
\begin{array}{c|cccc|c}
\mathcal T&s_2&s_3&s_4&s_5&\nu_{\mathcal T}\\ \hline
\mathcal T_3&1&2&3&3&3/4\\
\mathcal T_4&1&2&3&4&1
\end{array}
\]
Here $s_2=1$ and $s_3=2$ are attained by a comparable pair and by two sets in level $k$ with their common superset, $s_4\ge3$ because an interval of dimension two contains at most three sets in two consecutive levels, and $s_4=3$ is attained for $\mathcal T_4$ by $\{1,2\},\{1,4\},\{1,2,3\},\{1,2,4\}$. Thus $x_-(R)=3$, $x_+(R)=4$, $\nu(R)=3/4$ and $\nu_+(R)=1$. Moreover $\pi(R)=1$: if $\mathcal F$ is $R$-free, then the inclusion graph between its members in levels $k$ and $k+1$ has no cycle of length four, and every longer cycle contains a path on five vertices, so this graph is a forest and its number $E$ of edges satisfies $E\le a+b$, where $a$ and $b$ count the members of $\mathcal F$ in the two levels; on the other hand $E\ge(n-k)a+(k+1)b-(n-k)\binom nk$ by counting the subsets of members in level $k+1$, and since $k=n/2+O(1)$ these inequalities give $a+b\le(1+O(n^{-1}))M_n$. Theorem~\ref{gen:connected-main} therefore gives $\alpha_R(\mathcal W_{n,2}(p))=(1+o(1))pM_n$ with high probability when $pn^{3/4}\to\infty$, and $\alpha_R(\mathcal W_{n,2}(p))=(2+o(1))pM_n$ when $pn\to0$ and $pM_n\to\infty$. For this poset the theorem of Jiang, Longbrake, Spiro and Yepremyan~\cite{Jiang2025Tree} is stronger: restricting their upper bound for the random Boolean lattice to two levels gives the dense conclusion as soon as $pn\to\infty$, so the threshold order of $R$ is $n^{-1}$, and the gap between $\nu(R)$ and $\nu_+(R)$ is a limitation of the method.

Finally we compare $\nu(P)$ with the parameter of Jiang, Longbrake and Yepremyan~\cite{Jiang2026Number}. For a finite poset $D$, let $d^*(D)$ be the smallest dimension of the interval spanned by an induced copy of $D$ in a Boolean lattice, and let
\[
\mu(P)=\min_{D\subseteq P,\ |D|\ge2}\frac{d^*(D)}{|D|-1}.
\]
Their copies may use arbitrary levels, while our types are confined to the height window. In the induced version of Theorem~\ref{gen:connected-main}, every subfamily $\mathcal J$ of a type is an induced copy of a subposet $D$ of $P$ with $|D|=|\mathcal J|$, so $|\bigcup\mathcal J\setminus\bigcap\mathcal J|\ge d^*(D)$ and hence $\nu_{\mathrm{ind}}(P)\ge\mu(P)$. This argument does not compare $\mu(P)$ with the weak parameter, since a weak embedding may add comparabilities. The inequality can be strict even when weak and induced copies in the window coincide: for $V_5$ we have $\nu(V_5)=\nu_{\mathrm{ind}}(V_5)=1$, whereas $\mu(V_5)=4/5$, because five pairwise incomparable sets can be chosen in the middle level of $2^{[4]}$ but not in $2^{[3]}$, so $d^*(V_5)=4$, and every other subposet of $V_5$ gives a ratio of at least one. For $C_h$, $Q_d$ and $O_6$ the two parameters agree, with values $1$, $d/(2^d-1)$ and $3/5$.

\section*{Declaration on the use of generative AI}

During the preparation of this paper, the authors used generative AI tools to polish the exposition, to check the arguments and to verify the references and numerical constants. All mathematical content was conceived, verified and written by the authors, who reviewed and edited every AI-assisted passage and take full responsibility for the content of the paper.

\end{document}